\def\R{\textrm{I\kern-0.21emR}}
\def\N{\textrm{I\kern-0.21emN}}
\def\O{{\Omega}}
\def\n{{\nabla}}
\def\e{{\varepsilon}}
\def\uam{{u_{\alpha,m}}}
\def\duamh{{\dot u_{\alpha,m}[h]}}
\def\p{{\varphi}}
\def\pam{{\p_{\alpha,m}}}
\def\tsm{{\tilde \sigma_{\alpha,m}}}
\def\tm{{\tilde m}}
\def\S{{\mathbb S(0,r^*_0)}}
\def\B{{\mathbb B}}
\def\zetat{{\zeta_\alpha(m_t,\Lambda_-(m_t))}}
\def\r{{\frac{r^*_0}R}}
 \newcommandx{\unsure}[2][1=]{\todo[linecolor=red,backgroundcolor=red!25,bordercolor=red,#1]{#2}}
 \newcommandx{\change}[2][1=]{\todo[linecolor=blue,backgroundcolor=blue!25,bordercolor=blue,#1]{#2}}
 \newcommandx{\info}[2][1=]{\todo[linecolor=green,backgroundcolor=green!25,bordercolor=green,#1]{#2}}
 \newcommandx{\improvement}[2][1=]{\todo[linecolor=yellow,backgroundcolor=yellow!25,bordercolor=yellow,#1]{#2}}
\renewcommand{\geq}{\geqslant}
\renewcommand{\leq}{\leqslant}
\newtheorem{theorem}{Theorem}
\newtheorem*{thmnonumbering}{Theorem}
\newtheorem*{prnonumbering}{Proposition}
\newtheorem{proposition}{Proposition}
\newtheorem{corollary}{Corollary}
\newtheorem{definition}{Definition}
\newtheorem{lemma}{Lemma}
\theoremstyle{definition}
\theoremstyle{definition}\newtheorem{remark}{Remark}
\newcommand\wo{{W^{1,2}_0(\O)}}
\title{Shape optimization of a  weighted two-phase Dirichlet eigenvalue\footnote{I. Mazari and Y. Privat were partiallly supported by the French ANR Project ANR-18-CE40-0013 - SHAPO on Shape Optimization. I Mazari was partially supported by the Austrian Science Fund (FWF) projects I4052-N32 and F65.  I. Mazari, G. Nadin and Y. Privat were partially supported by the Project "Analysis and simulation of optimal shapes - application to lifesciences" of the Paris City Hall.
}}
\author{Idriss Mazari\footnote{CEREMADE, UMR CNRS 7534, Universit\'e Paris-Dauphine, Universit\'e PSL, Place du Mar\'echal De Lattre De Tassigny, 75775 Paris cedex 16, France (\texttt{mazari@ceremade.dauphine.fr}).}
       \and Gr\'egoire Nadin\footnote{ CNRS, Sorbonne Universit\'es, UPMC Univ Paris 06, UMR 7598, Laboratoire Jacques-Louis Lions, F-75005, Paris, France (\texttt{gregoire.nadin@sorbonne-universite.fr})}
	\and Yannick Privat\footnote{Universit\'e de Strasbourg, CNRS UMR 7501, INRIA, Institut de Recherche Math\'ematique Avanc\'ee (IRMA), 7 rue Ren\'e Descartes, 67084 Strasbourg, France ({\tt yannick.privat@unistra.fr}).}\textsuperscript{~~}\footnote{Institut Universitaire de France (IUF).}
	}
\date{}
\begin{document}

\maketitle

\begin{abstract}

This article is concerned with a spectral optimization problem: in a smooth bounded domain $\O$, for a bounded function $m$ and a nonnegative parameter $\alpha$, consider the first eigenvalue $\lambda_\alpha(m)$ of the operator $\mathcal L_m$ given by $\mathcal L_m(u)= -\operatorname{div} \left((1+\alpha m)\nabla u\right)-mu$. Assuming uniform pointwise and integral bounds on $m$, we investigate the issue of minimizing $\lambda_\alpha(m)$ with respect to $m$. Such a problem is related to the so-called ``two phase extremal eigenvalue problem'' and arises naturally, for instance in population dynamics where it is related to the survival ability of a species in a domain. 
We prove that unless the domain is a ball, this problem has no ``regular'' solution. We then provide a careful analysis in the case of a ball by: (1) characterizing the solution among all radially symmetric resources distributions, with the help of a new method involving a homogenized version of the problem; (2) proving in a more general setting a stability result for the centered distribution of resources with the help of a monotonicity principle for second order shape derivatives which significantly simplifies the analysis.
\end{abstract}
\noindent\textbf{Keywords:} shape derivatives, drifted Laplacian, bang-bang functions, spectral optimization, homogenization, reaction-diffusion equations.
\medskip

\noindent\textbf{AMS classification:} 	35K57, 35P99, 49J20, 49J50, 49Q10  	   

\tableofcontents


\section{Introduction and main results}

In recent decades, much attention has been paid to extremal problems involving eigenvalues, and in particular to shape optimization problems in which the unknown is the domain where the eigenvalue problem is solved (see e.g. \cite{Henrot2006,henrot2017shape} for a survey).  
The study of these last problems is motivated by stability issues of vibrating bodies, wave propagation in composite environments, or also on conductor thermal insulation.  

In this article, we are interested in studying a  particular extremal eigenvalues problem, involving a drift term, and which comes from the study of mathematical biology problems; here, we can show that the problem then boils down to a "two-phase" type problem, meaning that the differential operator whose eigenvalues we are trying to optimise has $-\nabla \cdot(A\nabla)$ as a principal part, and that $A$ is an optimisation variable, see Section \ref{BiologicalMotivations}. The influence of drift terms on optimal design problems is not so well understood. Such problems naturally arise for instance when looking for optimal shape design for two-phase composite materials \cite{DambrineKateb,Laurain,MuratTartar}. We expand on the bibliography in Section \ref{SSE:BackgroundH} of this paper, but let us briefly recall that, for composite materials, a possible formulation reads: given $\O$, a bounded connected open subset of $\R^n$ and a set of admissible non-negative densities $\mathcal{M}$ in $\O$, solve the optimal design problem
\begin{equation}\tag{$\hat P_\alpha$}\label{Pg:TwoPhase}
\inf_{m\in \mathcal M} \hat\lambda_\alpha(m)
\end{equation} 
where $\hat \lambda_\alpha(m)$ denotes the first eigenvalue of the elliptic operator 
$$
\hat{ \mathcal L}_\alpha^m:W^{1,2}_0(\O)\ni u\mapsto -\n \cdot \left((1+\alpha m)\n u\right).
$$ 
Restricting the set of admissible densities to {\it bang-bang} ones (in other words to functions taking only two different values) is known to be relevant for the research of structures optimizing the compliance. We refer to Section \ref{Pr:NonExistence} for detailed bibliographical comments.

Mathematically, the main issues regarding Problem \eqref{Pg:TwoPhase} concern the existence of optimal densities in $\mathcal M$, possibly the existence of optimal {\it bang-bang} densities (i.e characteristic functions). In this case, it is interesting to try to describe minimizers in a qualitative way.

In what follows, we will consider a refined version of Problem \eqref{Pg:TwoPhase}, where the operator $\hat{ \mathcal L}_\alpha^m$ is replaced with
\begin{equation}\label{def:Lmalpha}
 \mathcal L_m^\alpha:W^{1,2}_0(\O)\ni u\mapsto -\n \cdot \left((1+\alpha m)\n u\right)-mu.
\end{equation}
Besides its intrinsic mathematical interest, the issue of minimizing the first eigenvalue of $ \mathcal L_m^\alpha$ with respect to densities $m$ is motivated by a model of population dynamics (see Section~\ref{BiologicalMotivations}). 

Before providing a precise mathematical frame of the questions we raise in what follows, let us roughly describe the main results and contributions of this article:
\begin{itemize}
\item by adapting the methods developed by Murat and Tartar, \cite{MuratTartar}, and Cox and Lipton, \cite{CoxLipton}, we show that  the first eigenvalue of $ \mathcal L_m^\alpha$ has no regular minimizer in $\mathcal M$ unless $\Omega$ is a ball;
\item if $\Omega$ is a ball, denoting by $m_0^*$ a minimizer  of $ \mathcal L_m^0$ over $\mathcal M$ (known to be {\it bang-bang} and radially symmetric), we show the following stationarity result: $m_0^*$ still minimizes $ \mathcal L_m^\alpha$ over radially symmetric distributions of $\mathcal M$ whenever $\alpha$ is small enough and in small dimension ($n=1,2,3$). Such a result appears unexpectedly difficult to prove. Our approach is based on the use of a well chosen path of quasi-minimizers and on a new type of local argument.
\item  if $\Omega$ is a ball, we investigate the local optimality of ball centered distributions among all distributions and prove a quantitative estimate on the second order shape derivative by using a new approach relying on a kind of comparison principle for second order shape derivatives. 
\end{itemize}
Precise statements of these results are given in Section \ref{sec:mainRes}.

\subsection{Mathematical setup}
Throughout this article, $m_0$, $\kappa$ are fixed positive parameters. 
Since  in our work we want to extend the  results of \cite{LamboleyLaurainNadinPrivat}, let us define the set of admissible functions 
$$
\mathcal M_{m_0,\kappa}(\Omega)=\left\{m\in L^\infty(\O)\, , 0\leq m\leq \kappa\, ,\fint_\O m=m_0\right\},
$$
where $\fint_\O m$ denotes the average value of $m$ (see Section~\ref{sec:notations}) and assume that $m_0<\kappa$ so that $\mathcal M_{m_0,\kappa}(\O)$ is non-empty.
Given $\alpha\geq 0$ and $m\in \mathcal M_{m_0,\kappa}(\O)$, the operator $\mathcal L_m^\alpha$ is symmetric and compact. According to the spectral theorem, it is diagonalizable in $L^2(\O)$. In what follows, let  $\lambda_\alpha(m)$ be the first eigenvalue for this problem. According to the Krein-Rutman theorem, $\lambda_\alpha(m)$ is simple and its associated $L^2(\O)$-normalized eigenfunction $u_{\alpha,m}$ has a constant sign, say $u_{\alpha,m}\geq 0$. Let $R_{\alpha,m}$ be the associated Rayleigh quotient given by
\begin{equation}
R_{\alpha,m}:W^{1,2}_0(\O)\ni u\mapsto \frac{\int_\O (1+\alpha m)|\n u|^2-\int_\O mu^2}{\int_\O u^2}.\end{equation}
We recall that $\lambda_\alpha(m)$ can also be defined through the variational formulation 
\begin{equation}\label{Eq:Rayleigh} \lambda_{\alpha}(m):=\inf_{u\in W^{1,2}_0(\O)\, , u\neq 0} R_{\alpha,m}(u)=R_{\alpha,m}(u_{\alpha,m}).
\end{equation}
and that $u_{\alpha,m}$ solves 
\begin{equation}\label{Eq:EigenFunction}
\left\{
\begin{array}{ll}
-\n \cdot \Big((1+\alpha m)\n u_{\alpha,m}\Big)-mu_{\alpha,m}=\lambda_\alpha(m)u_{\alpha,m}&\text{ in }\O,
\\u_{\alpha,m}=0\text{ on }\partial \O.
\end{array}
\right.
\end{equation}
in a weak $W^{1,2}_0(\O)$ sense.
In this article, we  address the optimization problem
\begin{equation}\label{Pb:OptimizationEigenvalue}\tag{$P_\alpha$}\fbox{$\displaystyle
\inf_{m\in \mathcal M_{m_0,\kappa}(\O)}\lambda_\alpha(m).$}
\end{equation}
This problem is a modified version of the standard two-phase problem studied in \cite{MuratTartar,CoxLipton}; we detail the bibliography associated with this problem in Subsection \ref{SSE:BackgroundH}. It is notable that it is relevant in the framework of population dynamics,
when looking for optimal resources configurations in a heterogeneous environment for species survival, see Section \ref{BiologicalMotivations}.

\subsection{Main results}\label{sec:mainRes}
Before providing the main results of this article, we state a first fundamental property of the investigated model, reducing in some sense the research of general minimizers to the one of {\it bang-bang} densities. It is notable that, although the set of {\it bang-bang} densities is known to be dense in the set of all densities for the weak-star topology, such a result is not obvious since it rests upon continuity properties of $\lambda_\alpha$ for this topology. We overcome this difficulty by exploiting a convexity-like property of $\lambda_\alpha$.

\begin{proposition}[weak {\it bang-bang} property]\label{Th:BangBang}
Let $\O$ be a bounded connected subset of $\R^n$ with a Lipschitz boundary and let $\alpha>0$ be given. For every $m\in \mathcal M_{m_0,\kappa}(\O)$, there exists a {\it bang-bang} function $\tilde m\in \mathcal M_{m_0,\kappa}(\O)$ such that 
$$\lambda_\alpha(m)\geq \lambda_\alpha(\tilde m).$$ 
Moreover, if $m$ is not {\it bang-bang}, then we can choose $\tilde m$ so that the previous inequality is strict. 
\end{proposition}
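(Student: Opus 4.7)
My plan is to exploit a concavity property of $\lambda_\alpha$ as a function of $m$, combined with a bathtub-type rearrangement, to produce the bang-bang competitor $\tilde m$. First, I would observe that, for each fixed $u\in W^{1,2}_0(\Omega)\setminus\{0\}$, the Rayleigh quotient $R_{\alpha,m}(u)$ is \emph{affine} in $m$, so $\lambda_\alpha(m)=\inf_u R_{\alpha,m}(u)$ is concave as an infimum of affine functions of $m$ on $\mathcal M_{m_0,\kappa}(\Omega)$. By simplicity of the principal eigenvalue, $\lambda_\alpha$ is moreover G\^ateaux differentiable at every $m\in \mathcal M_{m_0,\kappa}(\Omega)$, with derivative in direction $h$ given by
\[
d\lambda_\alpha(m)[h]=\int_\Omega h\,\psi_m,\qquad \psi_m:=\alpha|\nabla u_{\alpha,m}|^2-u_{\alpha,m}^2,
\]
where $u_{\alpha,m}$ is $L^2$-normalized.

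\medskip

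Next, I would apply the bathtub principle to the linear functional $m'\mapsto \int_\Omega m'\psi_m$ on $\mathcal M_{m_0,\kappa}(\Omega)$ to obtain a bang-bang competitor $\tilde m\in \mathcal M_{m_0,\kappa}(\Omega)$ minimizing this functional: $\tilde m=\kappa$ on $\{\psi_m<c\}$ and $\tilde m=0$ on $\{\psi_m>c\}$, with the level $c$ adjusted so that the mass constraint is met, the value on $\{\psi_m=c\}$ being chosen bang-bang. By concavity of $\lambda_\alpha$, the super-tangent inequality then yields
\[
\lambda_\alpha(\tilde m)\leq \lambda_\alpha(m)+\int_\Omega(\tilde m-m)\psi_m\leq \lambda_\alpha(m),
\]
the last inequality coming from the minimizing property of $\tilde m$. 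This establishes the weak inequality.

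\medskip

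For the strict inequality when $m$ is not bang-bang, I would distinguish two cases based on the non-saturated set $\omega:=\{0<m<\kappa\}$ (of positive measure). If $\psi_m$ is not essentially constant on $\omega$, then by the bathtub principle $m$ itself is not a minimizer of $\int\cdot\,\psi_m$, so $\int_\Omega(\tilde m-m)\psi_m<0$ and the chain above is strict. If, on the contrary, $\psi_m\equiv c$ on $\omega$, then the first-order increment vanishes and I would invoke strict concavity along the segment $[m,\tilde m]$: equality of $\lambda_\alpha$ throughout this segment would force $u_{\alpha,m_t}\equiv u_{\alpha,m}$ for $t\in[0,1]$ (as $u_{\alpha,m}$ would then realize the infimum of each $R_{\alpha,m_t}$), and subtracting the corresponding eigenvalue equations would produce the overdetermined condition
\[
-\alpha\nabla\cdot\big((\tilde m-m)\nabla u_{\alpha,m}\big)=(\tilde m-m)\,u_{\alpha,m}\quad\text{on }\omega.
\]
Since in this case the bang-bang values of $\tilde m$ on $\omega\subset\{\psi_m=c\}$ are unconstrained apart from the mass requirement, one can choose $\tilde m$ so as to violate this identity, which yields the strict inequality.

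\medskip

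\textbf{The main obstacle} I anticipate lies precisely in this degenerate case: showing rigorously that \emph{not every} admissible bang-bang modification of $m$ on $\omega$ can satisfy the overdetermined PDE above. This step must combine the positivity of $u_{\alpha,m}$ with unique continuation or Hopf-type arguments for the principal eigenfunction, while exploiting the freedom in choosing the shape of the bang-bang pattern on $\omega$. It is precisely here that the ``convexity-like property'' alluded to by the authors should intervene in a refined, quantitative manner.
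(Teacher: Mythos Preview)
Your route to the non-strict inequality is correct and actually more direct than the paper's: concavity of $m\mapsto\lambda_\alpha(m)$ (as an infimum of affine functionals) together with the bathtub principle immediately yields a bang-bang $\tilde m$ with $\lambda_\alpha(\tilde m)\le\lambda_\alpha(m)$. The paper does \emph{not} use a bathtub construction. Instead it proves that $\lambda_\alpha$ is \emph{strictly} concave on $\mathcal M_{m_0,\kappa}(\Omega)$, observes that a non-bang-bang $m$ is not an extreme point of this convex set and so writes $m=(1-t)m_1+tm_2$ with $m_1\neq m_2$, whence $\min\{\lambda_\alpha(m_1),\lambda_\alpha(m_2)\}<\lambda_\alpha(m)$. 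Since $m_1$ need not be bang-bang, the paper then appeals to weak-$*$ density of bang-bang functions in $\mathcal M_{m_0,\kappa}(\Omega)$ together with upper semicontinuity of $\lambda_\alpha$ (as an infimum of weak-$*$ continuous linear functionals) to produce a bang-bang $\tilde m$ with $\lambda_\alpha(\tilde m)<\lambda_\alpha(m)$. Your construction is more explicit and avoids the density/semicontinuity detour; the paper's buys strict concavity as a reusable tool.

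The genuine gap is precisely your ``main obstacle''. In the degenerate case $\omega\subset\{\psi_m=c\}$, you correctly deduce that constancy of $t\mapsto\lambda_\alpha(m_t)$ would force $u_{\alpha,m_t}\equiv u_{\alpha,m}$ and hence the distributional identity $-\alpha\nabla\cdot(h\nabla u_{\alpha,m})=h\,u_{\alpha,m}$ with $h=\tilde m-m$. But the assertion ``one can choose $\tilde m$ so as to violate this'' is not justified: $h$ is merely $L^\infty$, the identity is only in $W^{-1,2}$, and neither Hopf nor unique continuation applies to this constraint on $h$. The paper closes exactly this door by computing the second G\^ateaux derivative: with $\dot u:=\dot u_{\alpha,m}[h]$ one finds
\[
\ddot\lambda_\alpha(m)[h]=2\int_\Omega \dot u^{\,2}\bigl(\lambda_\alpha(m)-R_{\alpha,m}[\dot u]\bigr)<0,
\]
since $\dot u\perp u_{\alpha,m}$ in $L^2$ and simplicity of $\lambda_\alpha(m)$ gives $R_{\alpha,m}[\dot u]>\lambda_\alpha(m)$. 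Once strict concavity is in hand, your degenerate case is immediate: $h\neq0$ and $g'(0)=\int_\Omega h\psi_m=0$ force $g(1)<g(0)$. Replacing your ad hoc overdetermination argument by this second-derivative computation would make your proof complete and, overall, shorter than the paper's.
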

In other words, given any resources distribution $m$, it is always possible to construct a {\it bang-bang} function $\tilde m$ that improves the criterion.

\paragraph{Non-existence for general domains.}
In a series  of paper, \cite{CasadoDiaz1,CasadoDiaz2,CasadoDiaz3}, Casado-Diaz proved that the problem of minimizing the first eigenvalue of the operator $u\mapsto -\n \cdot(1+\alpha m)\n u$ with respect to $m$ does not have a solution when $\partial \O$ is connected. His proof relies on a study of the regularity for this minimization problem, on homogenization and on a Serrin type argument. The following result is in the same vein, with two differences: it is weaker than his in the sense that it needs to assume higher regularity of the optimal set, but stronger in the sense that we do not make any strong assumption on $\partial \O$.  For further details regarding this literature, we refer to Section \ref{TwoPhase}.

\begin{theorem}\label{Th:NonExistence}
Let $\O$ be a bounded connected subset of $\R^n$ with a Lipschitz connected boundary, let $\alpha>0$ and $n\geq 2$. If the  optimization problem \eqref{Pb:OptimizationEigenvalue}  has a solution $\hat m \in \mathcal M_{m_0,\kappa}(\O)$, then this solution writes $\hat m=\kappa \mathds{1}_{\hat E}$, where $\hat E$ is a  measurable subset  of $\O$. Moreover, if $\partial \hat E$ is a $\mathscr C^2$ hypersurface and if $\O$ is connected,  then $\O$ is a ball.
\end{theorem}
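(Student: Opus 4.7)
The proof splits naturally into the two assertions of the statement.

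\textbf{Bang-bang structure.} The first claim is an immediate consequence of Proposition~\ref{Th:BangBang}: if $\hat m$ solves \eqref{Pb:OptimizationEigenvalue} but is not bang-bang, the weak bang-bang property supplies a competitor $\tilde m\in\mathcal M_{m_0,\kappa}(\O)$ with $\lambda_\alpha(\tilde m)<\lambda_\alpha(\hat m)$, contradicting minimality. Hence every minimizer writes $\hat m=\kappa\mathds{1}_{\hat E}$ for some measurable $\hat E\subset\O$.

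\textbf{Optimality condition on the free interface.} Assume now that $\Gamma:=(\partial\hat E)\cap\O$ is a $\mathscr C^2$ hypersurface and set $u:=u_{\alpha,\hat m}$. Standard elliptic regularity for the transmission problem \eqref{Eq:EigenFunction} shows that $u$ is smooth on each side of $\Gamma$, that the tangential gradient $\n_\tau u$ is continuous across $\Gamma$, and that the conormal flux $(1+\alpha\hat m)\partial_\nu u$ is continuous as well. I would then compute the first order shape derivative of $\lambda_\alpha$ at $\hat m$ along volume-preserving normal perturbations $V$ of $\hat E$ compactly supported near $\Gamma$. Using the Lagrangian characterization \eqref{Eq:Rayleigh} together with the Hadamard formula (and carefully accounting for the jump of $\partial_\nu u$ across $\Gamma$), one obtains a boundary integral
\[
d\lambda_\alpha(V)=\kappa\int_\Gamma\Phi(u,\n u)\,V\!\cdot\!\nu\,d\sigma
\]
where $\Phi$ is an explicit polynomial expression in $u$ and in the one-sided traces of $\n u$. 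Since $V\!\cdot\!\nu$ can be chosen with arbitrary sign subject only to a zero-mean constraint coming from $\fint_\O m = m_0$, first order optimality reduces to $\Phi(u,\n u)\equiv c$ on each connected component of $\Gamma$, for some constant $c$.

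\textbf{From interface optimality to $\O$ being a ball.} The remaining task is to extract a global symmetry statement from this interior overdetermined condition, and this is where I expect the heart of the proof to lie. Following the strategy of \cite{MuratTartar,CoxLipton}, my plan is to run a Serrin-type moving planes argument directly on the two-phase configuration $(\O,\hat E,u)$: the jumps of $u$ and of $\n u$ across $\Gamma$ are controlled by the transmission conditions, while the optimality identity $\Phi\equiv c$ plays the role of the over-determined boundary data in the classical Serrin setting. Propagating reflection symmetry across every hyperplane through an appropriate point of the configuration, and exploiting $n\geq 2$ together with the connectedness of $\partial\O$, one concludes that $\O$ and $\hat E$ must be concentric balls, and in particular that $\O$ is a ball. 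The delicate technical point is precisely the adaptation of the moving planes method to this two-phase geometry, since the over-determined data live on the interior interface $\Gamma$ rather than on $\partial\O$ as in Serrin's classical setting; I anticipate this to be the main obstacle of the argument.
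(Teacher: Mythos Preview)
Your bang-bang step is fine and matches the paper. The gap is in how you derive and exploit the optimality condition on $\Gamma=\partial\hat E\cap\O$.

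You propose to take the shape derivative of $\lambda_\alpha$ along normal perturbations of $\hat E$. This only tests $\hat m$ against nearby \emph{bang-bang} competitors and produces a single scalar identity $\Phi(u,\n u)=c$ on $\Gamma$. That identity is too weak: if you trace through it (or, equivalently, use the Gateaux switching function $\psi_{\alpha,\hat m}=\alpha|\n u|^2-u^2$ of Lemma~\ref{Le:DeriveeL21}), the two-sided level-set inequality at $\Gamma$ reduces to $(\partial_\nu u)_{ext}^2\geq(\partial_\nu u)_{int}^2$, which is \emph{automatically} satisfied by the flux transmission condition $(1+\alpha\kappa)(\partial_\nu u)_{int}=(\partial_\nu u)_{ext}$. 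You extract no information on $\n_\tau u$, hence you cannot conclude that $u$ is constant on $\Gamma$, and without that Serrin's theorem simply does not apply.

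The paper's decisive idea---which is precisely what \cite{MuratTartar,CoxLipton} actually do, not a moving-planes argument---is to test $\hat m$ against the \emph{relaxed} (homogenized) problem \eqref{Pb:OptimizationRelax}. Minimality there yields a different switching function
\[
\Psi_\alpha=\frac{\alpha}{1+\alpha\kappa}\,\hat\sigma^{2}|\n\hat u|^{2}-\hat u^{2},
\]
with a $\hat\sigma^2$ factor in front of $|\n\hat u|^2$. Because $\hat\sigma^2(\partial_\nu\hat u)^2$ is continuous across $\Gamma$, the level-set inequality now lands entirely on the tangential part and reads $(1+\alpha\kappa)^2|\n_\tau\hat u|^2\leq |\n_\tau\hat u|^2$, forcing $\n_\tau\hat u=0$. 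This is what gives both $\hat u=\mathrm{const}$ and $|\partial_\nu\hat u|=\mathrm{const}$ on $\Gamma$---exactly the classical Serrin overdetermination on a connected component of $\hat E$ (after checking $\hat u>\hat\mu$ there, Lemma~\ref{Eq:Opt2}).

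Your fallback plan---run moving planes directly on the two-phase configuration with only $\Phi\equiv c$ as overdetermined data---is not a known technique and you give no indication of how the reflection comparison would survive the coefficient jump across $\Gamma$; I would not expect it to go through. Finally, even once a component of $\hat E$ is a ball, one still has to propagate radial symmetry to all of $\O$; the paper does this via Cauchy--Kovalevskaya and Carleman unique continuation across successive spheres, a step your outline does not address.
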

The proof of this Theorem relies on methods developed by  Murat and Tartar, \cite{MuratTartar},  Cox and Lipton, \cite{CoxLipton}, and on a Theorem by Serrin \cite{Serrin1971}.


\paragraph{Analysis of optimal configurations in a ball.}
According to Theorem \ref{Th:NonExistence}, existence of regular solutions fail when $\O$ is not a ball. This suggest to investigate the case $\O=\B(0,R)$, which is the main goal of what follows.

Let us stress that proving the existence of a minimizer in this setting and characterizing it is a hard task. Indeed, to underline the difficulty, notice in particular that none of the usual rearrangement techniques (the Schwarz rearrangement or the Alvino-Trombetti one, see Section~\ref{TwoPhase}), that enable in general to reduce the research of solutions to radially symmetric densities, and thus to get compactness properties, can be applied here.

\begin{center}
\textsf{\large{The case of radially symmetric distributions}}
\end{center}
Here, we assume that $\O$ denotes the ball $\mathbb B(0,R)$ with $R>0$.  We define $r_0^*>0$ as the unique positive real number such that 
$$\kappa \frac{\left|\mathbb B(0,r_0^*)\right|}{\left|\mathbb B(0,R)\right|}=V_0.
$$
Let 
{\begin{equation} \label{eq:m0*}m_0^*=\kappa \mathds{1}_{\mathbb B(0,r_0^*)}=\kappa \mathds{1}_{E^*_0}\end{equation} }
be the centered distribution known to be the unique minimizer of $\lambda_0$ in $\mathcal M_{m_0,\kappa}(\O)$ (see e.g. \cite{LamboleyLaurainNadinPrivat}). 

In what follows, we restrict ourselves to the case of radially symmetric resources distributions.

\begin{theorem}\label{Th:RadialStability}
Let $\O=\mathbb B(0,R)$ and let $\mathcal M_{rad}$ be the subset of radially symmetric distributions of $ \mathcal M_{m_0,\kappa}(\O)$. The optimization problem
$$\inf_{m\in \mathcal M_{rad}}\lambda_\alpha(m)$$ has a solution. Furthermore, when $n=1,2,3$, there exists $\alpha^*>0$ such that, for any $\alpha<\alpha^*$, there holds
\begin{equation}
\min_{m\in \mathcal M_{rad}}\lambda_\alpha(m)=\lambda_\alpha(m^*_0).\end{equation}\end{theorem}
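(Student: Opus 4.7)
\textbf{Existence in $\mathcal M_{rad}$ via a homogenized formulation.}
The set $\mathcal M_{rad}$ is convex and weakly-$*$ compact in $L^\infty(\O)$; the Rayleigh characterization \eqref{Eq:Rayleigh} presents $\lambda_\alpha$ as an infimum over $u$ of maps that are affine in $m$, so $\lambda_\alpha$ is concave and weakly-$*$ upper semicontinuous, which is the wrong semicontinuity for a minimization. As suggested by the introduction, my plan is to pass to a homogenized formulation: along a minimizing sequence $(m_k) \subset \mathcal M_{rad}$ with $m_k \rightharpoonup^* m^\dagger$ in $L^\infty$ and $(1+\alpha m_k)^{-1} \rightharpoonup^* \eta$, a one-dimensional $H$-convergence argument performed in the radial variable identifies the limit equation, with effective principal coefficient $1/\eta$ (in general strictly smaller than $1+\alpha m^\dagger$). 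The resulting homogenized functional $\lambda_\alpha^{\mathrm{hom}}(m^\dagger,\eta)$ is lower semicontinuous on the natural set of admissible pairs, so a minimizer $(m^\dagger_*,\eta_*)$ exists. To return to a classical minimizer, I would combine Proposition~\ref{Th:BangBang} with an extremality/non-oscillation argument for the radial homogenized problem, forcing $\eta_*=(1+\alpha m^\dagger_*)^{-1}$ at a minimum and hence $m^\dagger_*\in \mathcal M_{rad}$.

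\textbf{Convergence $m_\alpha^*\to m_0^*$ as $\alpha\to 0$.}
Let $m_\alpha^*$ denote a radial minimizer for each small $\alpha>0$; by concavity of $\lambda_\alpha$ in $m$ one may take $m_\alpha^*$ bang-bang. Given any sequence $\alpha_n\to 0$, weak-$*$ compactness provides a cluster point $m^{\star\star}\in \mathcal M_{rad}$. Testing the Rayleigh quotient against a fixed $u$ yields $\limsup_n \lambda_{\alpha_n}(m_{\alpha_n}^*) \leq \lambda_0(m^{\star\star})$, while the uniform bound $1+\alpha_n m_{\alpha_n}^*\geq 1$ together with the weak $H^1_0$ / strong $L^2$ convergence of the associated eigenfunctions gives $\liminf_n \lambda_{\alpha_n}(m_{\alpha_n}^*) \geq \lambda_0(m^{\star\star})$. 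Comparing with $\lambda_{\alpha_n}(m_{\alpha_n}^*) \leq \lambda_{\alpha_n}(m_0^*) \to \lambda_0(m_0^*)$ and invoking uniqueness of the minimizer of $\lambda_0$ on $\mathcal M_{m_0,\kappa}(\O)$ proved in~\cite{LamboleyLaurainNadinPrivat} forces $m^{\star\star}=m_0^*$, so the whole family $m_\alpha^*\rightharpoonup^* m_0^*$. Since $m_0^*$ is a characteristic function, this convergence upgrades to strong convergence in every $L^p(\O)$, $p<\infty$.

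\textbf{Local quantitative argument forcing equality.}
The crux is to upgrade the strong $L^p$ convergence of the previous step into the equality $m_\alpha^*=m_0^*$. Following the ``well chosen path of quasi-minimizers'' mentioned in the introduction, I would construct a one-parameter family $(m_{\alpha,t})_{t\in[0,1]}\subset \mathcal M_{rad}$ with $m_{\alpha,0}=m_0^*$ and $m_{\alpha,1}=m_\alpha^*$, engineered so that $t\mapsto \lambda_\alpha(m_{\alpha,t})$ admits computable first and second shape derivatives and so that each $m_{\alpha,t}$ has $\lambda_\alpha$-value close to $\inf_{\mathcal M_{rad}}\lambda_\alpha$. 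At $\alpha=0$, the profile $m_0^*$ is a strict local minimizer of $\lambda_0$ along every nontrivial radially admissible direction, and the second-order term in $t$ is strictly positive. By continuity in $(\alpha,m)$ and the localization supplied by the previous step, this coercivity persists for $\alpha<\alpha^*$, which forces $\lambda_\alpha(m_{\alpha,t})>\lambda_\alpha(m_0^*)$ for all $t\in(0,1]$ and hence $m_\alpha^*=m_0^*$.

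\textbf{Main obstacle and dimension restriction.}
The main difficulty is the quantitative second-order coercivity estimate of the previous paragraph, uniform in small $\alpha$ and along the path: the shape derivative of $\lambda_\alpha$ involves $\nabla u_{\alpha,m}$ and its traces, whose regularity must be controlled along $t\mapsto m_{\alpha,t}$. The dimensional restriction $n\in\{1,2,3\}$ enters exactly at this point, as the natural range where the Sobolev embedding $H^2(\O)\hookrightarrow L^\infty(\O)$ is available; this embedding is what allows converting integral second-order inequalities into a coercivity bound on the cone of admissible radial perturbations.
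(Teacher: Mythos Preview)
Your overall architecture---existence, convergence of minimizers to $m_0^*$, then a local argument along a path---matches the paper, but the local step has a genuine gap and the mechanism you propose would not work as written.

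\medskip

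\textbf{The main gap: regularity of the switching function along the path.} You propose to study $t\mapsto \lambda_\alpha(m_{\alpha,t})$ and to exploit second-order coercivity inherited from $\alpha=0$. The difficulty is that the first derivative of this map is $\int_\Omega h_\alpha\,\psi_{\alpha,m_{\alpha,t}}$ with $\psi_{\alpha,m}=\alpha|\nabla u_{\alpha,m}|^2-u_{\alpha,m}^2$, and this switching function is \emph{discontinuous} across the interfaces of $m_{\alpha,t}$ (the normal derivative of $u$ jumps). Along a path that interpolates between two bang-bang functions, $m_{\alpha,t}$ is no longer bang-bang, and neither first nor second derivatives of $\lambda_\alpha$ enjoy the regularity you need for a uniform coercivity estimate. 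The paper circumvents this by \emph{not} differentiating $\lambda_\alpha$ along the path: it replaces $\lambda_\alpha(m_t)$ by the homogenized eigenvalue $f_\alpha(t):=\zeta_\alpha\bigl(m_t,\Lambda_-(m_t)\bigr)$, where $\Lambda_-$ is the harmonic mean. This has two virtues: (i) $f_\alpha(0)=\lambda_\alpha(m_0^*)$ and $f_\alpha(1)=\lambda_\alpha(m_\alpha)$ because both endpoints are bang-bang, and (ii) the derivative $f_\alpha'(t)=\int_\Omega h_\alpha\bigl(\tfrac{\alpha}{1+\alpha\kappa}\Lambda_-(m_t)^2|\nabla u_t|^2-u_t^2\bigr)$ has a $W^{1,\infty}$ integrand, because $\Lambda_-(m_t)\,u_t'$ is Lipschitz in the radial variable. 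A single application of the mean value theorem then gives $\lambda_\alpha(m_\alpha)-\lambda_\alpha(m_0^*)\geq C\int_\Omega |h_\alpha|\,\mathrm{dist}(\cdot,\mathbb S(0,r_0^*))$, with no second-order analysis needed. Your proposal misses this homogenized detour, which is precisely the ``new method'' advertised in the introduction.

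\medskip

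\textbf{A missing intermediate step.} Before the path argument, the paper inserts a reduction (its Step~2) showing that one may replace $m_\alpha$ by a bang-bang radial competitor $\tilde m_\alpha=\kappa\mathds 1_{\tilde E_\alpha}$ with $d_H(\tilde E_\alpha,E_0^*)\leq\varepsilon$ and $\lambda_\alpha(\tilde m_\alpha)\leq \lambda_\alpha(m_\alpha)$. Strong $L^1$ convergence alone does not give this Hausdorff proximity, and without it the sign analysis of $f_\alpha'$ near $r_0^*$ fails. This step uses the level sets of the (discontinuous) switching function $\psi_{\alpha,m_\alpha}$ together with the $L^\infty$ convergence of eigenfunctions.

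\medskip

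\textbf{The dimension restriction.} Your explanation via $H^2\hookrightarrow L^\infty$ is not what the paper uses; in fact $u_{\alpha,m}$ is not globally $H^2$ when $m$ is bang-bang. The restriction $n\in\{1,2,3\}$ enters through a one-dimensional Hardy inequality applied to the radial profile $\varphi_{\alpha,m}$: one needs $\int_0^R \varphi^2\,dr$ controlled by $\int_0^R r^{n-1}(\varphi')^2\,dr$, which produces a factor $R^{4-2n}$ and requires $n\leq 3$. This yields a uniform $W^{1,\infty}$ bound on $\varphi_{\alpha,m}$ and on $\Lambda_-(m_t)u_t'$, which is exactly the regularity feeding the mean-value argument above.

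\medskip

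\textbf{Existence.} Your homogenization route for existence is plausible but heavier than necessary; the paper simply cites the Alvino--Trombetti rearrangement as adapted in \cite{ConcaMahadevanSanz}, which already provides a radial minimizer without passing through a relaxed problem.
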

The proof of the existence part of the theorem relies on rearrangement techniques that were first introduced by Alvino and Trombetti in \cite{AlvinoTrombetti} and then refined in \cite{ConcaMahadevanSanz}.   
The stationarity result, i.e the fact that $m_0^*$ is a minimizer among radially symmetric distributions, was proved in the one-dimensional case in \cite{CaubetDeheuvelsPrivat}. To extend this result to higher dimensions, we developed an approach involving a homogenized version of the problem under consideration. The small dimensions hypothesis is due to a technical reason, which arises when dealing with elliptic regularity for this equation. 

Restricting ourselves to radially symmetric distributions might appear surprising since one could expect this result to be true without restriction, in $ \mathcal M_{m_0,\kappa}(\O)$. For instance, a similar result has been shown in the framework of two-phase eigenvalues  \cite{ConcaMahadevanSanz}, as a consequence of the Alvino-Trombetti rearrangement.  Unfortunately, regarding Problem \eqref{Pb:OptimizationEigenvalue},  no standard rearrangement technique leads to the { conclusion}, because of the specific form of the involved Rayleigh quotient. A first attempt in the investigation of the ball case is then to consider the case of radially symmetric distributions. It is notable that, even in this case, the proof appears unexpectedly difficult.

Finally,  we note that, as a consequence of the methods developed to prove Theorem~\ref{Th:RadialStability}, when a small amount of resources is available, the centered distribution $m^*_0$ is optimal among all resources distributions, regardless of radial symmetry assumptions.

\begin{corollary}\label{Th:Sketch}
Let $\O=\mathbb B(0,R)$ and $m_0^*$ be defined {by (\ref{eq:m0*})} There exist $\underline m>0$, $\underline \alpha>0$ such that, if 
$m_0\leq \underline m$ and $\alpha<\underline\alpha$, then the unique solution of \eqref{Pb:OptimizationEigenvalue} is $m^*_0=\kappa \mathds 1_{E^*_0}.$
\end{corollary}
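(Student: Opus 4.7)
The plan is to exploit the observation that, when $m_0$ is small, $m_0^*$ concentrates in the small ball $E_0^*=\mathbb B(0,r_0^*)$, on which the first Dirichlet eigenfunction of $-\Delta$ on $\O$ has a vanishingly small gradient. Consequently the $\alpha$-correction in the Rayleigh quotient at $m_0^*$ is of higher order in $m_0$ than the Schwarz-rearrangement gain obtained by moving away from $m_0^*$. The core idea is to combine a Schwarz-type symmetrization of the eigenfunction (producing a global but non-sharp lower bound) with the second-order coercivity of the shape derivative at $m_0^*$ (announced as the third main contribution of the paper) to close the gap in a neighborhood of $m_0^*$.

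By Proposition~\ref{Th:BangBang}, we may restrict to bang-bang densities $m=\kappa\mathds{1}_E$ with $|E|=V_0=m_0|\O|$, whose Schwarz rearrangement $m^\sharp$ is exactly $m_0^*$. Combining $\alpha m\geq 0$ with P\'olya--Szeg\H o and Hardy--Littlewood applied to the (nonnegative) normalized eigenfunction $u_{\alpha,m}$ produces the a priori bound
\[
\lambda_\alpha(m)=R_{\alpha,m}(u_{\alpha,m})\geq R_{0,m}(u_{\alpha,m})\geq R_{0,m_0^*}(u_{\alpha,m}^\sharp)\geq\lambda_0(m_0^*).
\]
On the other hand, using $u_{0,m_0^*}$ as a test function in $R_{\alpha,m_0^*}$ yields
\[
\lambda_\alpha(m_0^*)-\lambda_0(m_0^*)\leq \alpha\kappa\,\frac{\int_{E_0^*}|\nabla u_{0,m_0^*}|^2}{\int u_{0,m_0^*}^2}.
\]
Since $u_{0,m_0^*}\to\phi_0$ in $\mathscr C^1_{\mathrm{loc}}(\O)$ as $m_0\to 0$, with $\phi_0$ the radial first Dirichlet eigenfunction of $-\Delta$ on $\O$ satisfying $\nabla\phi_0(0)=0$ and hence $|\nabla\phi_0|^2\lesssim|x|^2$ near $0$, and since $r_0^*\sim m_0^{1/n}$, I would conclude the key quantitative estimate
\[
\lambda_\alpha(m_0^*)-\lambda_0(m_0^*)\lesssim \alpha\kappa\,m_0^{1+2/n}.
\]

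To upgrade the bound $\lambda_\alpha(m)\geq\lambda_0(m_0^*)$ into the strict inequality $\lambda_\alpha(m)>\lambda_\alpha(m_0^*)$ whenever $m\neq m_0^*$, I would split according to the symmetric difference $\delta_E:=|E\triangle E_0^*|$. For $\delta_E$ macroscopic, a quantitative form of the Hardy--Littlewood inequality, combined with the strict radial monotonicity on the ball of the weight $\phi_0^2-\alpha|\nabla\phi_0|^2$ (valid for $\alpha$ small since $\phi_0^2$ is strictly radially decreasing on $\mathbb B(0,R)$), yields a lower bound on $\lambda_0(m)-\lambda_0(m_0^*)$ that dominates the $O(\alpha m_0^{1+2/n})$ error once $\alpha\leq\underline\alpha$. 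For $\delta_E$ small, I would invoke the paper's local-stability result (the quantitative second-order positivity of the shape derivative of $\lambda_\alpha$ at $m_0^*$) to obtain a quadratic-type lower bound that closes the argument in this regime. The main obstacle will be to glue these two estimates at the intermediate scale of $\delta_E$ and to control the $m_0$-dependence of the constants appearing in the second-order bound, which may degenerate as the bubble $E_0^*$ shrinks; it is precisely this quantitative matching that fixes the thresholds $\underline m$ and $\underline\alpha$.
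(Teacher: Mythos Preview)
Your approach is genuinely different from the paper's, and the gaps you yourself flag are real and not easily closed along the lines you suggest.

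The paper's argument does \emph{not} use Schwarz rearrangement on $u_{\alpha,m}$, nor does it invoke the second-order shape-derivative result (Theorem~\ref{Th:ShapeStability}). Instead it \emph{decouples} the two occurrences of $m$: introduce, for a pair $(m_1,m_2)\in\mathcal M_{m_0,\kappa}(\O)^2$, the first eigenvalue $\eta_\alpha(m_1,m_2)$ of $-\nabla\cdot((1+\alpha m_1)\nabla\,\cdot)-m_2$. The Alvino--Trombetti rearrangement (applied to the diffusion coefficient, \emph{not} the Schwarz rearrangement of the eigenfunction) yields, for every $m$, a radially symmetric $\tilde m_1$ with $\lambda_\alpha(m)=\eta_\alpha(m,m)\geq \eta_\alpha(\tilde m_1,m_0^*)$. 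This reduces the problem to showing $\eta_\alpha(m,m_0^*)\geq\eta_\alpha(m_0^*,m_0^*)$ for all \emph{radially symmetric} $m$. Existence of a radial bang-bang minimiser $m_\alpha$ for $\eta_\alpha(\cdot,m_0^*)$ then follows by compactness; one identifies its $\alpha\to0$ limit as $\kappa\mathds 1_{E_0}$ with $E_0=\{|\nabla u_{0,m_0^*}|^2\leq\mu_0\}$; the smallness of $m_0$ is used only to guarantee $E_0=E_0^*$ (via $\mathscr C^1$ convergence of $u_{0,m_0^*}$ to the first Dirichlet eigenfunction). The conclusion then follows by rerunning the Hausdorff-reduction and mean-value-theorem argument of Theorem~\ref{Th:RadialStability}.

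Your route has two concrete problems. First, you lean on Theorem~\ref{Th:ShapeStability} for the ``small $\delta_E$'' regime, but that result (i) is proved only for $n=2$, and (ii) gives positivity of the second shape derivative at $\B^*$, not a quantitative inequality $\lambda_\alpha(\kappa\mathds 1_E)-\lambda_\alpha(m_0^*)\geq c\,\delta_E^2$ for general measurable $E$ near $E_0^*$; passing from the infinitesimal statement to an actual local lower bound requires a separate (and nontrivial) analysis, and your competitors $E$ need not be Hadamard perturbations of $E_0^*$ at all. Second, the matching you describe between the ``macroscopic'' Hardy--Littlewood gain and the $O(\alpha m_0^{1+2/n})$ error hinges on tracking how the quantitative stability constant for $\lambda_0$ behaves as $r_0^*\to0$; you acknowledge this may degenerate, and indeed it does, since the relevant gain scales with the oscillation of $u_{0,m_0^*}^2$ over an annulus of width $\sim r_0^*$. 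The decoupling strategy in the paper sidesteps both issues: once the competitor is radially symmetric and the potential is frozen at $m_0^*$, one is back in the one-dimensional setting where the homogenised path and mean-value argument apply directly, with no need for second-order shape derivatives or delicate gluing.
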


\begin{center}
\textsf{\large{Local stability of the ball distribution with respect to Hadamard perturbations of resources sets}}
\end{center}
In what follows, we tackle the issue of  the local minimality  of $m^*_0$ in $\mathcal M_{m_0,\kappa}(\O)$ with the help of a shape derivative approach. We obtain partial results in dimension $n=2$.

Let $\O$ be a bounded connected domain with a Lipschitz boundary, and consider a {\it bang-bang} function $m\in \mathcal M_{m_0,\kappa}(\O)$ writing $m=\kappa \mathds{1}_E$, for a measurable subset $E$ of $\O$ such that 
$\kappa |E|=m_0|\O|$. Let us write $\lambda_\alpha(E):=\lambda_\alpha\left(\mathds{1}_E\right)$, with a slight abuse of notation.
Let us assume that $E$ has a $\mathscr C^2$ boundary. Let $V:\O\rightarrow \R^n$ be a $W^{3,\infty}$ vector field with compact support, and define for every $t$ small enough, $E_t:=\left(\operatorname{Id}+tV\right)E$.
For $t$ small enough, $\phi_t:=\operatorname{Id}+tV$ is a smooth diffeomorphism from $E$ to $E_t$, and $E_t$ is an open connected set with a $\mathscr C^2$ boundary.
 If $\mathcal F:E\mapsto \mathcal F(E)$ denotes a shape functional, the first (resp. second) order shape derivative of $\mathcal F$ at $E$ in the direction $V$ is 
$$
\mathcal F'(E)[V]:=\left.\frac{d}{dt}\right|_{t=0}\mathcal F(E_t) \quad \left(\text{ resp. }\left.\frac{d^2}{d t^2}\right|_{t=0} \mathcal F(E_t)\right)
$$
whenever these quantities exist.

For further details regarding the notion of shape derivative, we refer to \cite[Chapter 5]{HenrotPierre}.

It is standard to write optimality conditions in terms of a sort of tangent space for the measure constraint: indeed, since the volume constraint $\operatorname{Vol}(E_t)=m_0\operatorname{Vol}(\O)/\kappa$ is imposed, we will deal with vector fields $V$ satisfying the linearized volume condition $\int_E \n \cdot V=0$. We thus call \textit{admissible at $E$} such vector fields and introduce
\begin{equation}\label{Eq:X}\mathcal X(E):=\left\{V\in W^{3,\infty}(\R^n;\R^n)\, , \int_E \n \cdot V=0\, , \Vert V\Vert _{W^{3,\infty}}\leq 1\right\}.
\end{equation}

A shape $E\subset \O$ with a $\mathscr C^2$ boundary such that $\kappa |E|=m_0|\O|$ is said to be critical if
\begin{equation}\label{Eq:FOO}
\forall V \in \mathcal X(E),\quad \lambda_\alpha'(E)[V]=0.
\end{equation}
or equivalently, if there exist a Lagrange multiplier $\Lambda_\alpha$ such that $ \left(\lambda_\alpha'-\Lambda_\alpha \operatorname{Vol}'\right)(E)[V]= 0$ for all $V \in W^{1,\infty}(\O)$, where $\operatorname{Vol}:\O\mapsto |\O|$ denotes the volume functional. 
Furthermore, if $E$ is a local minimizer for Problem \eqref{Pb:OptimizationEigenvalue}, then one has
\begin{equation}\label{Eq:SOO}
\forall V \in \mathcal X(E), \quad \left(\lambda_\alpha''-\Lambda_\alpha \operatorname{Vol}''\right)(E)[V,V]\geq 0.
\end{equation}
In what follows, we will still assume that $\O$ denotes the ball $\mathbb B(0,R)$ with $R>0$.
\begin{theorem}\label{Th:ShapeStability}
Let us assume that $n=2$ and that $\O=\mathbb B(0,R)$. The ball $E=\mathbb B(0,r_0^*)=\mathbb B^*$ satisfies the shape optimality conditions \eqref{Eq:FOO}-\eqref{Eq:SOO}. Furthermore, if $\Lambda_\alpha$ is the Lagrange multiplier associated with the volume constraint, there exist two constants $\overline\alpha>0$ and $C>0$ such that, for any $\alpha\in [0,\overline \alpha)$ and any vector field $V\in \mathcal X(\B^*)$, there holds
$$\left(\lambda_\alpha''-\Lambda_\alpha \operatorname{Vol}''\right)(\B^*)[V,V]\geq C\Vert  V\cdot \nu  \Vert _{L^2(\mathbb S^*)}^2.$$
\end{theorem}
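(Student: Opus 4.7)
The plan is to verify the first-order condition via the radial symmetry of the state, then reduce the second-order form to a diagonal quadratic form in a Fourier basis on the circle, and finally conclude by a perturbation argument based on the monotonicity principle for second-order shape derivatives announced earlier in the introduction.

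\textbf{First-order condition.} At $E=\B^*$, uniqueness of the Krein--Rutman eigenpair together with rotational invariance of $\mathcal L^\alpha_{m_0^*}$ forces $u_{\alpha,m_0^*}$ to be radial. A standard Hadamard computation for the Dirichlet eigenvalue of an operator of two-phase type produces a boundary expression for $\lambda_\alpha'(\B^*)[V]$ in terms of $V\cdot\nu|_{\mathbb S^*}$ and of $u_{\alpha,m_0^*}$ together with its one-sided normal derivatives on $\mathbb S^*$. These quantities are constant along $\mathbb S^*$ by radiality, so the derivative reduces to a scalar multiple of $\int_{\mathbb S^*}V\cdot\nu\,d\sigma=\int_{\B^*}\n\cdot V$, which vanishes for $V\in\mathcal X(\B^*)$. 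This proves \eqref{Eq:FOO} and simultaneously identifies the Lagrange multiplier $\Lambda_\alpha$.

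\textbf{Reduction of the second-order form.} Since $\B^*$ is critical, the Hadamard structure theorem ensures that $Q_\alpha[V,V]:=(\lambda_\alpha''-\Lambda_\alpha\operatorname{Vol}'')(\B^*)[V,V]$ depends only on $\varphi:=V\cdot\nu|_{\mathbb S^*}$. In dimension $n=2$ I would expand $\varphi(\theta)=\sum_{k\geq1}(a_k\cos k\theta+b_k\sin k\theta)$ on $\mathbb S^*=r_0^*\mathbb S^1$, the zero mode being absent by admissibility. The shape derivative $\dua[V]$ of the state solves a linearized transmission PDE whose forcing is linear in $\varphi$; writing it in polar coordinates, angular/radial separation of variables uncouples the Fourier modes and leads to a second order radial ODE of Bessel type involving the spectral parameter $\lambda_\alpha(m_0^*)$. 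Consequently $Q_\alpha$ is diagonalized as $Q_\alpha[V,V]=\sum_{k\geq1}\mu_k(\alpha)\,(a_k^2+b_k^2)$, where the coefficients $\mu_k(\alpha)$ are explicit in terms of the radial solutions of this ODE.

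\textbf{Coercivity at $\alpha=0$.} The case $\alpha=0$ is essentially the strict local minimality of $\B^*$ for the unperturbed two-phase eigenvalue problem (see \cite{LamboleyLaurainNadinPrivat}). This yields $\mu_k(0)\geq c_k>0$ for every $k$, and the explicit Bessel expression shows $\mu_k(0)\to+\infty$ as $k\to\infty$, the tangential-derivative contribution to the Hessian being the dominant term at high frequency. Hence $c_0:=\inf_{k\geq1}\mu_k(0)>0$, which rewrites as $Q_0[V,V]\geq c_0\|\varphi\|_{L^2(\mathbb S^*)}^2$.

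\textbf{Passage to $\alpha>0$.} The hard part is to propagate coercivity uniformly in the Fourier mode $k$ when $\alpha$ is turned on: a naive energy estimate on $\dua[V]-\dot u_{0,m_0^*}[V]$ loses derivatives and produces a perturbation constant that grows with $k$, destroying the bound at high frequency. The way around this is precisely the monotonicity principle for second-order shape derivatives stressed in the introduction: it should provide a global comparison of the form $Q_\alpha[V,V]\geq Q_0[V,V]-C\alpha\|\varphi\|_{L^2(\mathbb S^*)}^2$, valid on all of $\mathcal X(\B^*)$ with $C$ independent of $V$, by comparing the two quadratic forms directly instead of mode by mode. Once this estimate is available, choosing $\overline\alpha:=c_0/(2C)$ gives $Q_\alpha[V,V]\geq (c_0/2)\|\varphi\|_{L^2(\mathbb S^*)}^2$ for every $\alpha\in[0,\overline\alpha)$, which is the announced quantitative estimate.
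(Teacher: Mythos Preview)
Your first two steps (criticality and Fourier diagonalisation) match the paper. The gap is in the last two steps, where you misread what the ``monotonicity principle'' actually is.

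In Step~3 you assert $\mu_k(0)\to+\infty$ as $k\to\infty$, citing a dominant tangential-derivative contribution. At $\alpha=0$ there is no such contribution: the coefficient $\sigma_0\equiv 1$ has no jump, the transmission conditions degenerate, and in the paper's decomposition $\mu_k(\alpha)=\omega_{k,\alpha}+\zeta_{k,\alpha}$ one has $\zeta_{k,0}=0$ identically while $\omega_{k,0}$ stays bounded (the radial profile $z_{k,0}(r_0^*)$ tends to $0$ for large $k$). So your mechanism for the uniform lower bound $\inf_k\mu_k(0)>0$ is incorrect, and citing \cite{LamboleyLaurainNadinPrivat} only gives $\mu_k(0)>0$ for each $k$, not uniformity.

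More seriously, in Step~4 you invoke a global comparison $Q_\alpha\geq Q_0-C\alpha\|\varphi\|_{L^2}^2$ with $C$ independent of $V$, and attribute it to the monotonicity principle. That is not what the paper proves, and you give no hint of how to obtain such an estimate; as you yourself note, a direct comparison of the two quadratic forms loses a factor of $k$. The paper's monotonicity is of a completely different nature: it is monotonicity \emph{in the Fourier index $k$}, at fixed $\alpha$. Writing the diagonal coefficients as $\omega_{k,\alpha}+\zeta_{k,\alpha}$, one shows by a maximum-principle argument on the ODE satisfied by the radial profiles $z_{k,\alpha}$ that $z_{k,\alpha}\leq z_{1,\alpha}$ on $(0,R)$, hence $\omega_{k,\alpha}\geq\omega_{1,\alpha}$ for every $k$. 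This reduces the uniform-in-$k$ lower bound to the single mode $k=1$, for which continuity as $\alpha\to0$ gives $\omega_{1,\alpha}\geq C>0$. The remaining piece $\zeta_{k,\alpha}$ is shown to be $\geq -M\alpha$ uniformly in $k$ by a separate argument (sign analysis of $z_{k,\alpha}'(r_0^*)|_{ext}$ for large $k$ via the maximum principle, explicit Bessel bounds for finitely many small $k$). Your proposal does not contain this idea, and without it the passage from $\alpha=0$ to $\alpha>0$ is not justified.
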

\begin{remark}The proof requires explicit  computation of the shape derivative of the eigenfunction. We note that in \cite{DambrineKateb} such computations are carried out for the two-phase problem and that in \cite{KaoLouYanagida} such an approach is undertaken to investigate the stability of certain configurations for a weighted Neumann eigenvalue problem.

The main contribution of this result is to shed light on a monotonicity principle that enables one 
to lead a careful asymptotic analysis of the second order shape derivative of the functional as $\alpha \to 0$. It is important to note that, although this allows us to deeply analyze the second order optimality conditions, it is expected that the optimal coercivity norm in the right-hand side above is expected to be $H^{\frac12}$ whenever $\alpha>0$, which we do not recover with our method. The reason why we believe that in this context the optimal coercivity norm is $H^{\frac12}$ is that in \cite{DambrineKateb}, precise computations for the two-phase problem \eqref{Pg:TwoPhase} are carried out and a $H^{\frac12}$ coercivity norm is obtained for certain classes of parameters. On the other hand, when $\alpha =0$, it was shown in \cite{MazariQuantitative} that the optimal coercivity norm is $L^2$, and a quantitative inequality was then derived. 
\end{remark}

\begin{remark}
We believe that our strategy of proof may be used to obtain the same kind of coercivity norm in the three-dimensional case. However,  we believe that such a generalization would be non-trivial and need technicalities. Since the main contribution is to introduce a methodology to study the positivity of second-order shape derivative, we simply provide a possible strategy to prove the result in the three dimensional case in the concluding section of the proof of Theorem~\ref{Th:ShapeStability}, see Section~\ref{Se:CclSha}.
\end{remark}

The rest of this article is dedicated to proofs of the results we have just outlined.

\subsection{A biological application of the problem}\label{BiologicalMotivations}
Equation \eqref{Eq:EigenFunction} arises naturally when dealing with simple population dynamics in heterogeneous spaces. 

Let $\e\geq 0$ be a parameter of the model. We consider a population density whose flux is given by 
$$
\mathcal J_\e=-\n u+\e u \n m.
$$ 
Since $\n m$ might not make sense if $m$ is assumed to be only measurable, we temporarily omit this difficulty by assuming it smooth enough so that the expression above makes sense. The term $u \n m$ appears as a drift term and stands for a bias in the  population movement, modeling a tendency of the population to disperse along the gradient of resources and hence move to favorable regions. The parameter $\e$ quantifies the influence of the resources distribution on the movement of the species. 
The complete associated reaction diffusion equation, called ``logistic diffusive equation'', reads
$$
\frac{\partial u}{\partial t}=\n \cdot \Big(\n u-\e u \n m\Big)+mu-u^2\quad \text{in }\O,
$$
completed with suitable boundary conditions. In what follows, we will focus on {Dirichlet boundary conditions} meaning that the boundary of $\O$ is lethal for the population living inside. 
Plugging the change of variable $v=e^{-\e m}u$  in this equation leads to 
$$
\frac{\partial v}{\partial t}=\Delta v+\e \n m\cdot \n u +mv-e^{\e m}v\quad \text{in }\O.
$$ 
It is known (see e.g. \cite{BelgacemCosner,Belgacem,Murray}) that the asymptotic behavior of this equation is driven by
the principal eigenvalue of the operator $\tilde {\mathcal L}:u\mapsto -\Delta u-\e  \n m\cdot \n u -mu$.
The associated principal eigenfunction $\psi$ satisfies
$$
-\n \cdot \left(e^{\e m}\n \psi\right)-me^{\e m}\psi=\tilde \lambda_\e \psi e^{\e m}\quad \text{in }\O.
$$
Following the approach developed in \cite{LamboleyLaurainNadinPrivat}, optimal configurations of resources correspond to the ones ensuring the fastest convergence to the steady-states of the PDE above, which comes to
minimizing $\tilde \lambda_\e(m)$ with respect to $m$.

By using Proposition \ref{Th:BangBang}, which enables us to only deal with {\it bang-bang} densities $m$, one shows easily that minimizing $\tilde \lambda_\e(m)$ over $\mathcal M_{m_0,\kappa}(\O)$  is equivalent to minimizing $\lambda_\e(m)$ over $\mathcal M_{m_0,\kappa}(\O)$, in other words to
Problem \eqref{Pb:OptimizationEigenvalue} with $\alpha=\e$. Theorem \ref{Th:NonExistence} can thus be interpreted as follows in this framework: assuming that the population density moves along the gradient of the resources, it is  not possible to lay the resources in an optimal way. 
Note that the conclusion is completely different in the case $\alpha=0$ (see \cite{LamboleyLaurainNadinPrivat}) or in the one-dimensional case (i.e.  $\O=(0,1)$) with $\alpha >0$ (see \cite{CaubetDeheuvelsPrivat}), where minimizers exist. 
In the last case, optimal configurations for three kinds boundary conditions (Dirichlet, Neumann, Robin) have been obtained, by using a new rearrangement technique. 
Finally, let us mention the related result \cite[Theorem 2.1]{Hamel2011},  dealing with Faber-Krahn type inequalities for general operators of the form
$$
\mathcal K:u\mapsto -\n \cdot(A\n  u)- V\cdot \n u-mu
$$ 
where $A$ is a positive symmetric matrix.
Let us denote the first eigenvalue of $\mathcal K$ by $E(A,V,m)$. It is shown, by using new rearrangements, that there exist radially symmetric elements $A^*,V^*,m^*$ such that
$$
0< \inf A\leq A^*\leq \Vert A\Vert _\infty,\quad \Vert A^{-1}\Vert _{L^1}=\Vert (A^*)^{-1}\Vert _{L^1},\quad
\Vert V^*\Vert _{L^\infty}\leq \Vert V\Vert _{L^\infty}
$$
and $E(A,V,m)\geq E(A^*,V^*,m^*)$.
We note that applying this result directly to our problem would not allow us to conclude. Indeed, we would get that for every $\O$ of volume $V_1$ and every $m\in \mathcal M_{m_0,\kappa}(\O)$, if $\O^*$ is the ball of volume $V_1$, there exist two radially symmetric functions $m_1$ and $m_2$ satisfying $m_1$, $m_2$ in $\mathcal M_{m_0,\kappa}(\O)$ such that $\lambda_\alpha(m)\geq \mu_\alpha(m_1,m_2)$, where $\mu_\alpha(m_1,m_2)$ is the first eigenvalue of the operator $-\n \cdot((1+\alpha m_1)\n )-m_2$.
We note that this result could also be obtained by using the symmetrization techniques of \cite{AlvinoTrombetti}. 

Finally, let us mention optimal control problems involving a similar model but a different cost functional, related to:
\begin{itemize} 
\item the total size of the population for a logistic diffusive equation in \cite{Mazari2020,MNPChapter,Mazari2021}. 
\item optimal harvesting of a marine resource, investigated in the series of articles \cite{MR3035462,MR2476432,MR3628307}. 
\end{itemize} 

\subsection{Notations and notational conventions, technical properties of the eigenfunctions}\label{sec:notations}
Let us {sum up} the notations used throughout this article.
\begin{itemize}[label=\textbullet]
\item $\R_+$ is the set of non-negative real numbers. $\R_+^*$ is the set of positive real numbers.
\item  $n$ is a fixed positive integer and $\O$ is a bounded connected domain in $\R^n$.
\item if $E$ denotes a subset of $\O$, the notation $\mathds{1}_E$ stands for the characteristic function of $E$, equal to 1 in $E$ and 0 elsewhere. 
\item the notation $\Vert \cdot\Vert $ used without subscript refers to the standard Euclidean norm in $\R^n$. When referring to the norm of a Banach space $\mathcal X$, we write it $\Vert \cdot\Vert _{\mathcal X}$.
\item The average of every $f\in L^1(\O)$ is denoted by $\fint_\O f:=\frac1{|\O|}\int_\O f$.
\item $\nu$ stands for the outward unit normal vector on $\partial \O$.
\item if $m(\cdot)$ is a given function in $L^\infty(\O)$ and $\alpha$ a positive real number, we will use the notation $\sigma_{\alpha,m}$ to denote the function $1+\alpha m$. {When there is no ambiguity, we sometimes use the notation $\sigma_{\alpha}$ to alleviate notations.}
\item If $E$ denotes a subset of $\O$ with $\mathscr{C}^2$ boundary, we will use the notations 
$$
f|_{int}(y)=\lim_{x\in E,x\to y}f(x)\quad \text{and}\quad f|_{ext}(y):=\lim_{x \in (\O\backslash E),x\to y}f(y)
$$
so that $\llbracket f\rrbracket =f|_{ext}-f|_{int}$ denotes the jump of $f$ at $x\in \partial E$.
\end{itemize}


\section{Preliminaries}\label{Pr:BangBang}

\def\dlamh{{\dot \lambda_\alpha(m)[h]}}
\def\ddlamh{{\ddot \lambda_\alpha(m)[h]}}
\def\dduamh{{\ddot u_{\alpha,m}[h]}}

\subsection{Switching function}
To derive optimality conditions for Problem~\eqref{Pb:OptimizationEigenvalue}, we introduce the tangent cone to $\mathcal{M}_{m_0,\kappa}(\O)$ at any point of this set.

\begin{definition} (\cite[chapter 7]{HenrotPierre})\label{def:tgtcone}
For every $m\in \mathcal M_{m_0,\kappa}(\O)$, the tangent cone to the set $\mathcal{M}_{m_0,\kappa}(\O)$ at $m$, also called the \textit{admissible cone} to the set $\mathcal{M}_{m_0,\kappa}(\O)$ at $m$, denoted by $\mathcal{T}_{m}$ is the set of functions $h\in L^\infty(\O)$ such that, for any sequence of positive real numbers $\varepsilon_n$ decreasing to $0$, there exists a sequence of functions $h_n\in L^\infty(\O)$ converging to $h$ as $n\rightarrow +\infty$, and $m+\varepsilon_nh_n\in\mathcal{M}_{m_0,\kappa}(\O)$ for every $n\in\N$.\label{footnote:cone}
\end{definition}

Notice that, as a consequence of this definition, any $h \in \mathcal T_m$ satisfies $\fint_\O h=0$. 
\begin{lemma}\label{diff:valPvecP}
Let $m\in  \mathcal M_{m_0,\kappa}(\O)$ and $h\in  \mathcal T_m$. The mapping $ \mathcal M_{m_0,\kappa}(\O)\ni m\mapsto u_{\alpha,m}$ is twice differentiable at $m$ in direction $h$ in a strong $L^2(\O)$ sense and in a weak $W^{1,2}_0(\O)$ sense, and the mapping $ \mathcal M_{m_0,\kappa}(\O)\ni m\mapsto \lambda_\alpha$ is twice differentiable in a strong $L^2(\O)$ sense.
\end{lemma}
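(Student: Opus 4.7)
The plan is to apply the implicit function theorem to the eigenvalue equation \eqref{Eq:EigenFunction} coupled with the $L^2$ normalization, treating the pair (eigenfunction, eigenvalue) as a joint implicit function of the weight $m$. Accordingly, I would introduce
$$
F: L^\infty(\O) \times \wo \times \R \longrightarrow W^{-1,2}(\O) \times \R, \quad F(m, u, \lambda) = \bigl(\mathcal L_m^\alpha u - \lambda u,\ \|u\|_{\lo}^2 - 1\bigr),
$$
which is a polynomial map (linear in $m$ and $\lambda$, quadratic in $u$), hence $C^\infty$, and satisfies $F(m, \uam, \lambda_\alpha(m)) = 0$.

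The key step is to verify that the partial Fréchet derivative
$$
D_{(u,\lambda)} F(m, \uam, \lambda_\alpha(m))\cdot (v, \mu) = \bigl((\mathcal L_m^\alpha - \lambda_\alpha(m))\, v - \mu \uam,\ 2\textstyle\int_\O \uam v\bigr)
$$
is a Banach space isomorphism. Since $\mathcal L_m^\alpha$ is self-adjoint with compact resolvent on $\lo$ and $\lambda_\alpha(m)$ is simple by the Krein--Rutman theorem, the Fredholm alternative gives $\ker(\mathcal L_m^\alpha - \lambda_\alpha(m)) = \R \uam$ with range $\uam^\perp$. For any prescribed right-hand side $(f, c)$, the compatibility condition forces $\mu = -\langle f, \uam\rangle / \|\uam\|_{\lo}^2$; this determines $v \in \wo$ uniquely modulo $\R \uam$, and the remaining constant is pinned down by the normalization equation $2 \int_\O \uam v = c$. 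Bijectivity follows.

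Having established invertibility, the implicit function theorem yields a $C^\infty$ local branch $m' \mapsto (u_{\alpha, m'}, \lambda_\alpha(m'))$ from a neighborhood of $m$ in $L^\infty(\O)$ into $\wo \times \R$. Evaluating along admissible perturbations $m + t h_t$ with $h_t \to h \in \mathcal T_m$, one recovers the existence of $\duamh, \dduamh \in \wo$ and of $\dlamh, \ddlamh \in \R$, together with strong $\wo$-convergence of the difference quotients --- which a fortiori covers both the strong $\lo$ and weak $\wo$ modes claimed in the statement --- and the explicit formulas for the derivatives can be read off from the chain rule by differentiating \eqref{Eq:EigenFunction} and testing against $\uam$.

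The principal technical obstacle lies in the isomorphism step: one must carefully verify that $\mathcal L_m^\alpha - \lambda_\alpha(m)$ is Fredholm of index zero from $\wo$ into $W^{-1,2}(\O)$, which follows from the compactness of the embedding $\wo \hookrightarrow \lo$ together with coercivity of $\mathcal L_m^\alpha + C\,\mathrm{Id}$ for $C$ large enough, and then combine simplicity with the single real unknown $\mu$ to match kernel and cokernel dimensions. Once this linear picture is clear, the rest is essentially bookkeeping, and higher-order differentiability follows by iterating the argument.
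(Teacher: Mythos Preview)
Your proposal is correct and follows essentially the same route as the paper: both apply the implicit function theorem to the map $(m,u,\lambda)\mapsto(\mathcal L_m^\alpha u-\lambda u,\ \Vert u\Vert_{L^2}^2-1)$ from $L^\infty\times W^{1,2}_0\times\R$ to $W^{-1,2}\times\R$, and both establish invertibility of $D_{(u,\lambda)}F$ via the Fredholm alternative combined with the simplicity of $\lambda_\alpha(m)$. The paper parametrizes by the perturbation $h$ rather than by $m$ itself, but this is an immaterial change of variables.
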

The proof of this lemma is technical and  is postponed to Appendix \ref{Ap:Differentiability}.

For $t$ small enough, let us introduce the mapping $g_h:t\mapsto \lambda_\alpha \left([m+th]\right)$. Hence, $g_h$ is twice differentiable. 
The first and second order derivatives of $\lambda_\alpha$ at $m$ in direction $h$, denoted by $\dot\lambda_\alpha(m)[h]$ and $\ddot\lambda_\alpha(m)[h]$, are defined by
$$
\dot \lambda_\alpha(m)[h]:=g_h'(0)\quad\text{and}\quad \ddot\lambda_\alpha(m)[h]:=g_h''(0).
$$ 
\begin{lemma}\label{Le:DeriveeL21}
Let $m\in \mathcal{M}_{m_0,\kappa}(\O)$ and $h\in \mathcal{T}_m$. The mapping $m\mapsto \lambda_\alpha(m)$ is differentiable at $m$ in direction $h$ in $L^2$ and its differential reads
\begin{equation}\label{Eq:Derivee21}
\dot \lambda_\alpha(m)[h]=\int_\O h\psi_{\alpha,m},\quad \text{with}\quad \psi_{\alpha,m}:=\alpha  |\n u_{\alpha,m}|^2-u_{\alpha,m}^2.
\end{equation}
The function $\psi_{\alpha,m}$ is called {\it switching function}.
\end{lemma}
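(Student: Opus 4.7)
The plan is to start from the weak formulation of the eigenvalue problem \eqref{Eq:EigenFunction} and differentiate it in the direction $h\in\mathcal{T}_m$, using the differentiability granted by Lemma \ref{diff:valPvecP}. I would normalize the eigenfunction by $\int_\O u_{\alpha,m}^2=1$ throughout, so that differentiating the normalization condition gives the orthogonality relation $\int_\O u_{\alpha,m}\,\dot u_{\alpha,m}[h]=0$.

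Concretely, for every test function $\varphi\in W^{1,2}_0(\O)$, the weak form of \eqref{Eq:EigenFunction} reads
\begin{equation*}
\int_\O (1+\alpha m)\n u_{\alpha,m}\cdot \n \varphi - \int_\O m\,u_{\alpha,m}\varphi = \lambda_\alpha(m)\int_\O u_{\alpha,m}\varphi.
\end{equation*}
I would differentiate this identity with respect to $m$ in the direction $h$, using Lemma \ref{diff:valPvecP} to justify that both $u_{\alpha,m}$ and $\lambda_\alpha(m)$ are differentiable. Writing $\dot u_{\alpha,m}[h]$ for the derivative of the eigenfunction, one obtains
\begin{equation*}
\int_\O \alpha h\,\n u_{\alpha,m}\cdot\n\varphi + \int_\O (1+\alpha m)\n \dot u_{\alpha,m}[h]\cdot\n\varphi - \int_\O h u_{\alpha,m}\varphi - \int_\O m\,\dot u_{\alpha,m}[h]\varphi = \dot\lambda_\alpha(m)[h]\int_\O u_{\alpha,m}\varphi + \lambda_\alpha(m)\int_\O \dot u_{\alpha,m}[h]\varphi.
\end{equation*}

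The key step is then to choose $\varphi=u_{\alpha,m}$ and exploit self-adjointness. Indeed, using the original weak formulation with test function $\dot u_{\alpha,m}[h]$ (which lies in $W^{1,2}_0(\O)$), all terms involving $\dot u_{\alpha,m}[h]$ on the left collapse into $\lambda_\alpha(m)\int_\O u_{\alpha,m}\,\dot u_{\alpha,m}[h]$, which cancels the second term on the right. Combined with the normalization $\int_\O u_{\alpha,m}^2=1$, this yields
\begin{equation*}
\dot\lambda_\alpha(m)[h]=\int_\O h\bigl(\alpha |\n u_{\alpha,m}|^2 - u_{\alpha,m}^2\bigr) = \int_\O h\,\psi_{\alpha,m},
\end{equation*}
which is the desired formula.

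I don't anticipate a major obstacle here, as the computation is the standard Hadamard-type argument for simple eigenvalues of self-adjoint operators; the only delicate point is justifying the differentiation under the integral sign and the strong $L^2$ sense of the derivative, but both are supplied by Lemma \ref{diff:valPvecP} and its proof in Appendix \ref{Ap:Differentiability}. One should also note that the formula makes sense for $h\in L^\infty(\O)$ (in particular no regularity of $h$ is required) because $\psi_{\alpha,m}\in L^1(\O)$ follows from $u_{\alpha,m}\in W^{1,2}_0(\O)$, so the integral $\int_\O h\psi_{\alpha,m}$ is well-defined for every $h\in\mathcal{T}_m$.
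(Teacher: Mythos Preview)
Your proposal is correct and follows essentially the same route as the paper: differentiate the weak formulation of \eqref{Eq:EigenFunction} using Lemma~\ref{diff:valPvecP}, test against $u_{\alpha,m}$, and use the original equation (self-adjointness) together with the normalization to cancel all terms involving $\dot u_{\alpha,m}[h]$. The paper presents the differentiated equation in strong form \eqref{Eq:L2Derivative1} before multiplying by $u_{\alpha,m}$ and integrating by parts, but this is exactly your weak-form computation with $\varphi=u_{\alpha,m}$.
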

\begin{proof}
According to Lemma~\ref{diff:valPvecP}, we can differentiate the variational formulation associated to \eqref{Eq:EigenFunction} and get that the differential $\duamh$ of  $m\mapsto u_{\alpha,m}$ at $m$ in direction $h$  satisfies, with $\sigma_\alpha :=1+\alpha m$,
\begin{equation}\label{Eq:L2Derivative1}\left\{\begin{array}{lll}
-\n \cdot\Big(\sigma_\alpha\n \duamh\Big)-\alpha \n \cdot\Big(h\n \uam\Big)=&\dlamh \uam +\lambda_\alpha(m)\duamh & \\
& +m\duamh+h\uam  & \text{ in }\O,\\
\duamh=0  {\text{ on } \partial \O},\\
\int_\O \uam \duamh=0.&
\end{array}\right.
\end{equation}
Multiplying {(\ref{Eq:L2Derivative1})} by $\uam$, integrating by parts and using that $\uam$ is normalized in $L^2(\O)$ leads to 
\begin{align*}
\dlamh =&\underbrace{\int_\O \sigma_\alpha \n \duamh\cdot \n \uam -\lambda_{\alpha}(m)\int_\O \uam \duamh-\int_\O m\uam \duamh }_{=0\text{ according to \eqref{Eq:EigenFunction}}}\\
&+\int_\O \alpha h |\n \uam|^2-\int_\O h u_{\alpha,m}^2.
\end{align*}
\end{proof}

\subsection{Proof of Proposition~\ref{Th:BangBang}}
The proof relies on concavity properties of the functional $\lambda_\alpha$. More precisely, let  $m_1,m_2\in \mathcal M_{m_0,\kappa}(\O)$. We will show that the map $f:[0,1]\ni t\mapsto \lambda_\alpha \left((1-t)m_1+tm_2\right)$ is strictly concave, i.e that $ f''< 0$ on $[0,1]$.

Note that the characterization of the concavity in terms of second order derivatives makes sense, according to Lemma~\ref{diff:valPvecP}, since $\lambda_\alpha$ is twice differentiable. Before showing this concavity property, let us first explain why it implies the conclusion of Proposition~\ref{Th:BangBang} (the weak {\it bang-bang} property).
{Suppose that $m\in \mathcal M_{m_0,\kappa}(\O)$ is not { bang-bang}}. The set $\mathcal{I}=\{0<m<\kappa\}$ is then of positive Lebesgue measure and $m$ is therefore not extremal in $ \mathcal M_{m_0,\kappa}(\O)$, according to \cite[Prop.~7.2.14]{HenrotPierre}.
We then infer the existence of $t \in (0,1)$ as well as two distinct elements $m_1$ and $m_2$ of $ \mathcal M_{m_0,\kappa}(\O)$ such that $m=(1-t) m_1+tm_2$. 
Because of the strict concavity of $\lambda_\alpha$, the solution of the optimization problem $\min \{\lambda_\alpha((1-t) m_1+tm_2)\}$ is either $m_1$ or $m_2$, and moreover, $m$ cannot solve this problem. 
Assume that $m_1$ solves this problem without loss of generality. One thus has $\lambda_\alpha(m_1)<\lambda_\alpha (m)$. Since the subset of {\it bang-bang} functions of $\mathcal M_{m_0,\kappa}(\O)$ is dense in $\mathcal M_{m_0,\kappa}(\O)$ for the weak-star topology of $L^\infty(\O)$, there exists a sequence of {\it bang-bang} functions $(m^k)_{k\in \N}$ of $\mathcal M_{m_0,\kappa}(\O)$ converging weakly-star to $m_1$ in $L^\infty(\O)$. Furthermore, $\lambda_\alpha$ is upper semicontinuous for the weak-star topology of $L^\infty(\O)$, since it reads as the infimum of continuous linear functionals for this topology. Let $\varepsilon>0$. We infer the existence of $k_\varepsilon\in \N$ such that $\lambda_\alpha(m^{k_\varepsilon})\leq \lambda_\alpha(m_1)+\varepsilon$. By choosing $\varepsilon$ small enough, we get that $\lambda_\alpha(m^{k_\varepsilon})<\lambda_\alpha (m)$, whence the result.

\medskip

It now remains to prove that $f$ is strictly concave. Let $m\in \mathcal M_{m_0,\kappa}(\O)$, and set $m_1=m$, $h=m_2-m_1$, we observe that $f''(t)=\ddot\lambda_{\alpha}((1-t)m_1+tm_2)[h]$ for all $t\in [0,1]$.
The differential $\duamh$ of  $m\mapsto u_{\alpha,m}$ at $m$ in direction $h$, denoted $\duamh$, satisfies \eqref{Eq:L2Derivative1} and the second order {Gateaux derivatives} $\dduamh$ and $\ddlamh$ solve, with $\sigma_\alpha:=1+\alpha m$,
\begin{equation}\label{Eq:L2Derivative2}
\left\{\begin{array}{lll}
-\n \cdot\Big(\sigma_\alpha\n \dduamh\Big)-2\alpha \n \cdot\Big(h\n \duamh\Big)=&\ddlamh \uam +2\dlamh \duamh \\
&+\lambda_\alpha(m)\dduamh+m\dduamh+2h\duamh & \text{ in }\O,\\
\dduamh=0 \quad {\text{ on } \partial \O.}&&
\end{array}\right.\end{equation}
Multiplying {(\ref{Eq:L2Derivative2})} by $\uam$, using that $\uam$ is normalized in $L^2(\O)$ and integrating by parts yields
\begin{align*}
\ddlamh&=\underbrace{\int_\O \sigma_\alpha \n \dduamh\cdot \n \uam-\lambda_{\alpha}(m)\int_\O \uam \dduamh-\int_\O m\uam \dduamh }_{=0\text{ according to \eqref{Eq:EigenFunction}}}
\\&+2\alpha \int_\O h \n \duamh,\n \uam-2\int_\O h\uam\duamh
\\&=2\left(-\int_\O \sigma_\alpha |\n \duamh|^2+\int_\O m\duamh^2+\lambda_\alpha(m)\int_\O \duamh^2\right)+2\underbrace{\dlamh \int_\O u_{\alpha,m} \duamh}_{=0\text{ since $\int_\O \uam \duamh=0$}}
\\&=2\int_\O \duamh^2\left(-R_{\alpha,m}[\duamh]+\lambda_\alpha(m)\right)< 0,
\end{align*}
where the last inequality comes from the observation that, whenever $h\neq 0$, one has $\duamh\neq 0$ and $\duamh$ is in the orthogonal space to the first eigenfunction $\uam$ in $L^2(\O)$. Since the first eigenvalue is simple, the Rayleigh quotient of $\duamh$ is greater than $\lambda_{\alpha}(m)$.


\section{Proof of Theorem \ref{Th:NonExistence}}\label{Pr:NonExistence}
This proof is based on a homogenization argument, inspired from the notions and techniques introduced in \cite{MuratTartar}. In the next section, we gather the preliminary tools and material involved in what follows.\subsection{Background material on homogenization and bibliographical comments}\label{SSE:BackgroundH}
Let us recall several usual definitions and results in homogenization theory we will need hereafter.
\begin{definition}[$H$-convergence]
Let $(m_k)_{k\in \N}\in \mathcal M_{m_0,\kappa}(\O)^\N$ and for every $k\in \N$, define respectively $\sigma_k$ and $u_k(f)$ by $\sigma_k=1+\alpha m_k$ and as the unique solution of
$$
\left\{\begin{array}{ll}-\n \cdot(\sigma_k \n u_k(f))=f&\text{ in }\O\\
u_k(f)=0 \text{ on }\partial \O
\end{array}
\right.
$$
where $f\in L^2(\O)$ is given. We say that the sequence $(\sigma_k)_{k\in \N}$  $H$-converges to $A:\O\to M_{n}(\R)$ if, for every $f\in L^2(\O)$, the sequence $(u_k(f))_{k\in \N}$ converges weakly to $u_\infty$ in $\wo$ and the sequence $(\sigma_k\n u_k)_{k\in \N}$ converges weakly to $A\n u_\infty $ in $ L^2(\O)$, where $u_\infty$ solves
$$\left\{\begin{array}{ll}-\n \cdot(A\n u_\infty)=f&\text{ in }\O\, , 
\\u_\infty=0 \text{ on }\partial \O\end{array}
\right.$$
In that case, we will write $\sigma_k \overset{H}{\underset{k\to \infty}\longrightarrow} A$.
\end{definition}
\begin{definition}[arithmetic and geometric means]\label{De:Homo}
Let $m\in \mathcal M_{m_0,\kappa}(\O)$ and $\sigma=1+\alpha m$. We define the arithmetic mean of $\sigma$ by $\Lambda_+(m)=\sigma$, and its harmonic mean by $\Lambda_-(m)=\frac{1+\alpha \kappa}{1+\alpha(\kappa-m)}$.
One has $\Lambda_-(m)\leq \Lambda_+(m)$, according to the arithmetic-harmonic inequality, with equality if  and only if
$m$ is a {\it bang-bang} function.
\end{definition}
\begin{prnonumbering}\cite[Proposition 10]{MuratTartar}
Let $(m_k)_{k\in \N}\in \mathcal M_{m_0,\kappa}(\O)^\N$ and $(\sigma_k)_{k\in \N}$ given by $\sigma_k=1+\alpha m_k$. Up to a subsequence, there exists $m\in \mathcal M_{m_0,\kappa}(\O)$ such that $(m_k)_{k\in \N}\in \mathcal M_{m_0,\kappa}(\O)^\N$ converges to $m$ for the weak-star topology of $L^\infty$.

Assume moreover that the sequence $(\sigma_k)_{k\in \N}$ $H$-converges to a matrix $A$. Then, $A$ is a symmetric matrix, its spectrum $\Sigma(A)=\{\lambda_1,\dots,\lambda_n\}$ is real, and
\begin{equation} \label{Spec}\tag{$J_1$} \Lambda_-(m)\leq \min \Sigma(A)\leq \max \Sigma(A)\leq \Lambda_+(m).\end{equation}
\begin{equation}\label{Spec1}\tag{$J_2$}\sum_{j=1}^n \frac1{\lambda_j-1}\leq \frac1{\Lambda_-(m)-1}+\frac{n-1}{\Lambda_+(m)-1},\end{equation}
\begin{equation} \label{Spec2}\tag{$J_3$}\sum_{j=1}^n \frac1{1+\alpha\kappa-\lambda_j}\leq \frac1{1+\alpha \kappa-\Lambda_-(m)}+\frac{n-1}{1+\alpha\kappa -\Lambda_+(m)}.\end{equation}
\end{prnonumbering}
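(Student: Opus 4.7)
The plan is to apply the Murat--Tartar framework for $H$-limits of two-phase isotropic conductivities, with the three assertions $(J_1)$, $(J_2)$, $(J_3)$ corresponding to progressively sharper pointwise constraints on the $H$-limit. First, I would invoke Banach--Alaoglu on the uniformly bounded family $(m_k)_{k\in\N} \subset L^\infty(\Omega)$ to extract a weak-$*$ accumulation point $m$; the weak-$*$ closedness of $\mathcal M_{m_0,\kappa}(\Omega)$ (stability of the pointwise bounds $0\leq m_k \leq \kappa$ and of the average $\fint_\Omega m_k = m_0$) places $m$ in this set. The general compactness theorem of $H$-convergence applied to the uniformly elliptic family $\sigma_k = 1+\alpha m_k$ then yields a further subsequence along which $\sigma_k \xrightarrow{H} A$; symmetry of $A$ is inherited from the scalar (hence symmetric) nature of each $\sigma_k$ via the corrector characterization of $H$-limits, and the standard spectral bounds for $H$-limits place $\Sigma(A) \subset [1, 1+\alpha\kappa]$.

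The Voigt--Reuss inequalities $(J_1)$ would then be derived with the div-curl lemma. Since $\sigma_k = 1+\alpha m_k$ depends linearly on $m_k$, one has $\sigma_k \rightharpoonup^* \Lambda_+(m)$. Testing the identity $A\xi\cdot\xi = \lim\bigl(\sigma_k |\nabla w_k^\xi|^2\bigr)$ against the canonical correctors $w_k^\xi$ with $\nabla w_k^\xi \rightharpoonup \xi$, and invoking the pointwise convexity of $\sigma\mapsto \sigma|\xi|^2$, gives the upper bound $A\xi\cdot\xi \leq \Lambda_+(m)|\xi|^2$. The lower bound $A \geq \Lambda_-(m) I$ is dual: run the same div-curl argument on the flux side $\sigma_k\nabla w_k^\xi$ and use the two-phase convexity estimate
$$
\text{weak-}\!\lim \frac{1}{\sigma_k} \;\leq\; \frac{1}{\Lambda_-(m)} \quad\text{a.e. in } \Omega,
$$
which follows from convexity of $t\mapsto 1/(1+\alpha t)$ on $[0,\kappa]$ together with the chord bound against the extreme values $t\in\{0,\kappa\}$.

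For the refined trace inequalities $(J_2)$ and $(J_3)$, I would use Tartar's translation method. For $(J_2)$, translate by $I$: the family $\sigma_k - 1 = \alpha m_k \geq 0$ still $H$-converges, to $A-I$, and the Hashin--Shtrikman-type improvement of the Wiener bound --- obtained by coupling $n$ correctors $w_k^{\xi_1},\dots,w_k^{\xi_n}$ along an orthonormal frame and applying the div-curl lemma to the full Gram matrix of fluxes --- yields an estimate on $\operatorname{tr}\bigl((A-I)^{-1}\bigr) = \sum_j 1/(\lambda_j - 1)$; the right-hand side of $(J_2)$ is precisely the bang-bang value of this trace bound, realized in the extremal layered configuration. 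Inequality $(J_3)$ is obtained symmetrically by translating by $(1+\alpha\kappa)I$ from above, exploiting $(1+\alpha\kappa)-\sigma_k = \alpha(\kappa - m_k)\geq 0$ and repeating the argument on $((1+\alpha\kappa)I - A)^{-1}$.

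The main obstacle is the Hashin--Shtrikman improvement itself, that is, the passage from the scalar Voigt--Reuss bound to the $n$-dimensional trace inequality. In dimension two the $90^\circ$ rotation converts curl-free fields into divergence-free ones and provides the needed null Lagrangian almost for free, but for $n \geq 3$ one has to construct an appropriate quadratic null Lagrangian on pairs (curl-free, divergence-free) underlying the compensated compactness argument and verify that it survives the passage to the $H$-limit. I would follow the treatment of \cite{MuratTartar}, where diagonalizing $A$ in an adapted orthonormal frame reduces the matter to coordinate-wise scalar convexity inequalities on the eigenvalues and delivers both $(J_2)$ and $(J_3)$ in one stroke.
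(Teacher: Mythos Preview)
The paper does not supply its own proof of this proposition: it is stated with the citation \cite[Proposition 10]{MuratTartar} and used as a black box in Section~\ref{SSE:BackgroundH}. There is therefore no in-paper argument against which to compare your proposal.

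That said, your sketch correctly identifies the standard Murat--Tartar route: Banach--Alaoglu for the weak-$*$ limit $m$, $H$-compactness for $A$, the div-curl lemma for the Voigt--Reuss bounds $(J_1)$, and Tartar's translation method (shifting by $I$ and by $(1+\alpha\kappa)I$) for the Hashin--Shtrikman-type trace bounds $(J_2)$ and $(J_3)$. This is precisely the framework of the cited reference, so in that sense your approach and the paper's ``approach'' (namely, citing \cite{MuratTartar}) coincide. One small remark: your chord argument for the harmonic-mean bound is correct in spirit, but note that in this specific two-phase setting the relevant inequality is an \emph{equality} for bang-bang $m_k$ and the chord bound for general $m_k$; the definition of $\Lambda_-(m)$ in the paper already encodes this.
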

For a given $m\in \mathcal M_{m_0,\kappa}(\O)$, we introduce 
$$
M_m^\alpha=\{A:\O\rightarrow S_n(\R)\, , A \text{ satisfies \eqref{Spec}-\eqref{Spec1}-\eqref{Spec2}}\}.
$$
For a matrix-valued application $A\in M_m^\alpha$ for some $m\in \mathcal M_{m_0,\kappa}(\O)$, it is possible to define the principal eigenvalue of $A$ via Rayleigh quotients as 
\begin{equation}\label{Eq:Zeta}
\zeta_\alpha(m,A):=\inf_{u\in \wo\, , \int_\O u^2=1}\int_\O  A\n u\cdot \n u-\int_\O mu^2.
\end{equation}
Note that the dependence of $\zeta_\alpha$ on the parameter $\alpha$ is implicitly contained in the condition $A\in M_m^\alpha$. We henceforth focus on the following relaxed version of the optimization problem:
\begin{equation}\label{Pb:OptimizationRelax}
\inf_{m\in \mathcal M_{m_0,\kappa}(\O)\, , A\in M_m^\alpha} \zeta_\alpha(m,A).
\end{equation}
for which we have the following result.
\begin{thmnonumbering}\cite[Proposition 10]{MuratTartar}
\begin{itemize}
\item[(i)] For every $m\in \mathcal M_{m_0,\kappa}(\O)$ and $A \in M_m^\alpha$, there exists a sequence $(m_k)_{k\in \N}\in \mathcal M_{m_0,\kappa}(\O)$ such that $(m_k)_{k\in \N}$ converges to $m$ for the weak-star topology of $L^\infty$, and the sequence $(\sigma_k)_{k\in \N}$ defined by $\sigma_k=1+\alpha m_k$ $H$-converges to $A$, as $k\to +\infty$. 
\item[(ii)] The mapping $(m,A)\mapsto \lambda_\alpha(m,A)$ is continuous with respect to the $H$-convergence (see in particular \cite{Oleinik}).
\item[(iii)] The variational problem  \eqref{Pb:OptimizationRelax}
has a solution $(\hat{m},\hat{A})$; by definition, $\hat{A}\in M_{\hat{m}}^\alpha$.  Furthermore, if $\hat{u}$ is the associated eigenfunction, then $\hat{A} \n \hat{u} =\Lambda_-(\hat{m})\n \hat{u}$.
\end{itemize}
\end{thmnonumbering}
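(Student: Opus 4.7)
The plan is to establish the three items separately, following the Murat--Tartar framework.

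\textbf{Density (item (i)).} The strategy is to realize every admissible pair $(m,A)$, with $A\in M_m^\alpha$, as an $H$-limit of genuine two-phase mixtures $\sigma_k=1+\alpha m_k$. For matrices $A$ attaining the boundary of the Hashin--Shtrikman-type region defined by \eqref{Spec}--\eqref{Spec2}, simple laminates of the two phases $1$ and $1+\alpha\kappa$, with appropriate volume proportion $\theta(x)=m(x)/\kappa$ and layer direction, do the job: explicit formulas give the $H$-limit of periodic stratifications. For interior matrices one iterates the construction via rank-$p$ sequential laminates, which are known to exhaust the $G$-closure characterized precisely by \eqref{Spec}--\eqref{Spec2}. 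One then partitions $\O$ into small cubes on which $m$ and $A$ are essentially constant, applies the laminate construction in each cube, and concludes by a diagonal argument together with the localisation property of $H$-convergence.

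\textbf{Continuity and existence (items (ii)--(iii)).} Consider $(m_k,A_k)$ with $m_k\overset{*}{\rightharpoonup} m$ in $L^\infty(\O)$ and $\sigma_k\overset{H}{\to}A$, and denote by $u_k$, $u$ the associated $L^2$-normalized principal eigenfunctions. The variational identity
\[
\zeta_\alpha(m_k,A_k)=\int_\O A_k\n u_k\cdot \n u_k-\int_\O m_k u_k^2
\]
together with the uniform ellipticity $1\leq A_k\leq 1+\alpha\kappa$ yields a uniform $W^{1,2}_0(\O)$ bound on $u_k$, hence a weak limit $u_\infty$, strong in $L^2(\O)$ by Rellich. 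By definition of $H$-convergence, $A_k\n u_k\rightharpoonup A\n u_\infty$ in $L^2$, and a standard div--curl argument passes the quadratic form to the limit. Combined with strong $L^2$-convergence of $u_k$ and weak-$\ast$ convergence of $m_k$, this gives $\zeta_\alpha(m_k,A_k)\to \zeta_\alpha(m,A)$ and identifies $u_\infty=u$. Applied to a minimizing sequence of \eqref{Pb:OptimizationRelax}, the $H$-compactness of uniformly elliptic matrices and the weak-$\ast$ compactness of $\mathcal M_{m_0,\kappa}(\O)$ extract a subsequence converging to some $(\hat m,\hat A)$, and the Murat--Tartar bounds \eqref{Spec}--\eqref{Spec2} pass to the limit so that $\hat A\in M_{\hat m}^\alpha$; the continuity just proved shows that $(\hat m,\hat A)$ is a minimizer.

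\textbf{Eigenvector identification.} Fix $\hat m$ and minimize the Rayleigh-type functional \eqref{Eq:Zeta} over $A\in M_{\hat m}^\alpha$. The constraint region $M_{\hat m}^\alpha$ is convex and closed in an appropriate sense, and an inspection of \eqref{Spec}--\eqref{Spec2} shows that at each point $x$ the quadratic form $\xi\mapsto A(x)\xi\cdot\xi$, minimized over admissible spectra $(\lambda_1,\ldots,\lambda_n)$, is minimized when $\xi$ is an eigenvector of $A(x)$ for the eigenvalue $\Lambda_-(\hat m(x))$: the trace-type bounds force at most one eigenvalue to equal $\Lambda_-$ and the others $\Lambda_+$, and aligning the small eigenvector with $\xi$ minimizes $A(x)\xi\cdot\xi$. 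Applying this pointwise to $\xi=\n\hat u(x)$ forces $\hat A(x)\n\hat u(x)=\Lambda_-(\hat m(x))\n\hat u(x)$ on the set $\{\n\hat u\neq 0\}$, which is the claim.

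\textbf{Main obstacle.} The most delicate step is the density statement (i): matching exactly the $G$-closure set characterised by \eqref{Spec}--\eqref{Spec2}. The upper/lower eigenvalue bounds \eqref{Spec} come from straightforward energy comparison, but the trace identities \eqref{Spec1}--\eqref{Spec2} require the translation method, based on well-chosen null Lagrangians and compensated compactness. For the sequel only the soft consequences, i.e.\ continuity and existence, are needed, so one could use (i) as a black box and concentrate the delicate work on (ii)--(iii).
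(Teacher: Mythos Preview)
The paper does not actually prove this theorem: it is stated in Section~\ref{SSE:BackgroundH} as background material and attributed to \cite[Proposition~10]{MuratTartar} (with (ii) also referring to \cite{Oleinik}). There is therefore no ``paper's own proof'' to compare against; the authors simply import the result and use it as a black box, exactly as you suggest one might do in your final paragraph.

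Your sketch is a fair outline of the Murat--Tartar argument, but two points deserve comment. First, in (ii) you write ``$\sigma_k\overset{H}{\to}A$'' while working with pairs $(m_k,A_k)$ in the relaxed problem; the relevant hypothesis is that the matrices $A_k$ $H$-converge to $A$, not the scalars $\sigma_k$. Also, $H$-convergence is defined via problems with a \emph{fixed} source term, so passing to the limit in the eigenvalue problem requires an extra step (e.g.\ rewriting $-\nabla\cdot(A_k\nabla u_k)=f_k$ with $f_k=(\lambda_k+m_k)u_k$ strongly convergent in $H^{-1}$, then invoking the stability of $H$-limits under strong $H^{-1}$ perturbations of the right-hand side). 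Second, in (iii) your pointwise minimization argument is correct in spirit, but the statement ``the trace-type bounds force at most one eigenvalue to equal $\Lambda_-$ and the others $\Lambda_+$'' is not needed and not quite what is used: what matters is only that for any $A\in M_{\hat m}^\alpha$ and any vector $\xi$ one has $A\xi\cdot\xi\geq \Lambda_-(\hat m)|\xi|^2$ by \eqref{Spec}, with equality achievable by a rank-one laminate aligned with $\xi$; optimality of $\hat A$ then forces equality along $\xi=\nabla\hat u$.
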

This theorem allows us to solve Problem~\eqref{Pb:OptimizationRelax}.
\begin{corollary}\cite{MuratTartar}\label{Co:Equivalence}
If Problem~\eqref{Pb:OptimizationEigenvalue} has a solution $\hat{m}$, then the couple $(\hat{m},1+\alpha \hat{m})$ solves Problem~\eqref{Pb:OptimizationRelax}.
\end{corollary}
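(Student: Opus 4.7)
The plan is to show directly that the candidate pair $(\hat m, \hat A) := (\hat m, (1+\alpha\hat m)I)$ is both admissible in the relaxed problem and achieves the value $\lambda_\alpha(\hat m)$, and then to beat any other competitor via the density and continuity statements (i)--(ii) of the preceding theorem. The admissibility of $\hat A$ in $M_{\hat m}^\alpha$ is the only nontrivial point. To establish it, I would first observe that, by Proposition~\ref{Th:BangBang}, any minimizer $\hat m$ of \eqref{Pb:OptimizationEigenvalue} must be bang-bang: otherwise the weak bang-bang property would produce a bang-bang $\tilde m \in \mathcal M_{m_0,\kappa}(\O)$ with $\lambda_\alpha(\tilde m) < \lambda_\alpha(\hat m)$, contradicting minimality. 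Hence pointwise $\hat m \in \{0,\kappa\}$ and the arithmetic and harmonic means coincide: $\Lambda_-(\hat m) = \Lambda_+(\hat m) = 1 + \alpha \hat m$. Setting $\lambda_1 = \cdots = \lambda_n = 1+\alpha\hat m$, each of the constraints \eqref{Spec}--\eqref{Spec2} becomes a trivial equality, so $\hat A \in M_{\hat m}^\alpha$. Plugging $A = \hat A$ into \eqref{Eq:Zeta} recovers the Rayleigh quotient \eqref{Eq:Rayleigh}, and therefore $\zeta_\alpha(\hat m, \hat A) = \lambda_\alpha(\hat m)$.

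To establish the minimality of $(\hat m, \hat A)$ in \eqref{Pb:OptimizationRelax}, I would pick an arbitrary competitor $(m, A)$ with $A \in M_m^\alpha$ and invoke part (i) of the last theorem to produce a sequence $(m_k)_{k \in \N} \subset \mathcal M_{m_0,\kappa}(\O)$ converging to $m$ for the weak-$\star$ topology of $L^\infty(\O)$, whose associated coefficients $\sigma_k := 1 + \alpha m_k$ $H$-converge to $A$. For each $k$, the same Rayleigh quotient identification used above gives
$$\zeta_\alpha(m_k, \sigma_k) = \lambda_\alpha(m_k) \geq \lambda_\alpha(\hat m),$$
where the inequality encodes the minimality of $\hat m$ for the original problem. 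By the continuity of $(m, A) \mapsto \zeta_\alpha(m, A)$ under $H$-convergence provided by part (ii), I can pass to the limit $k \to +\infty$ and obtain
$$\zeta_\alpha(m, A) \geq \lambda_\alpha(\hat m) = \zeta_\alpha(\hat m, \hat A).$$
Since $(m, A)$ was arbitrary, $(\hat m, \hat A)$ realises the infimum in \eqref{Pb:OptimizationRelax}.

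The main (and essentially only) subtlety is the admissibility of the scalar matrix-field $\hat A = (1+\alpha \hat m) I$ in the relaxed admissible set. For a generic density $m \in \mathcal M_{m_0,\kappa}(\O)$, the scalar matrix puts all eigenvalues at the extreme upper bound $\Lambda_+(m)$ and the Hashin--Shtrikman-type inequalities \eqref{Spec1}--\eqref{Spec2} need no longer be satisfied. This obstruction is removed precisely because Proposition~\ref{Th:BangBang} forces the minimizer $\hat m$ to be bang-bang, collapsing $\Lambda_-(\hat m)$ and $\Lambda_+(\hat m)$ to a single value pointwise and trivializing the Murat--Tartar constraints. Once this reduction is available, the remainder of the argument is a direct approximation-and-continuity computation.
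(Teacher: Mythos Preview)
Your argument is correct and follows essentially the same route as the paper: both rely on the density statement (i) and the $H$-continuity statement (ii) of the preceding theorem to compare $\lambda_\alpha(\hat m)=\zeta_\alpha(\hat m,(1+\alpha\hat m)I)$ with an arbitrary competitor $\zeta_\alpha(m,A)$ through an approximating sequence $(m_k,\sigma_k)$. The paper phrases this as a contradiction (assuming the relaxed minimum is strictly below $\lambda_\alpha(\hat m)$ and producing $m_k$ with $\lambda_\alpha(m_k)<\lambda_\alpha(\hat m)$), while you argue directly, but the two are equivalent.

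Your treatment is in fact slightly more careful than the paper's on one point: you explicitly check that the candidate pair $(\hat m,(1+\alpha\hat m)I)$ is admissible for the relaxed problem by invoking Proposition~\ref{Th:BangBang} to conclude that $\hat m$ is bang-bang, hence $\Lambda_-(\hat m)=\Lambda_+(\hat m)$ and the Murat--Tartar constraints \eqref{Spec}--\eqref{Spec2} collapse to equalities. The paper's short proof takes this admissibility for granted. Your observation that for a generic (non-bang-bang) $m$ the scalar matrix need not lie in $M_m^\alpha$ is well taken and makes the role of the bang-bang reduction transparent.
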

\begin{proof}[Proof of Corollary \ref{Co:Equivalence}]
Assume that the solution of \eqref{Pb:OptimizationRelax} is $(\hat{m}, \hat{A})$ and that $\hat{A}\neq 1+\alpha \hat{m}$. Then there exists a sequence $(m_k)_{k\in \N}$ converging
weak-star in $L^\infty$ to $\hat{m}$ and such that the sequence $(1+\alpha m_k)_{k\in \N}$ $H$-converges to $\hat{A}$. This means that 
$$\lambda_\alpha(\hat{m})=\zeta_\alpha(\hat{m},1+\alpha \hat{m})>\zeta_\alpha(\hat{m},\hat{A})=\underset{k\to \infty}\lim \lambda_\alpha(m_k)$$ which immediately yields a contradiction.
\end{proof}

Let us end this section with several bibliographical comments on such problems.

\paragraph{Bibliographical comments on the two-phase conductors problem.}\label{TwoPhase}

Problem \eqref{Pg:TwoPhase} {with $\mathcal M:=\mathcal M_{m_0,\kappa}(\Omega)$} has drawn a lot of attention in the last decades, since the seminal works by Murat and Tartar, \cite{Murat,MuratTartar}
Roughly speaking, this optimal design problem is, in general, ill-posed and one needs to introduce a relaxed formulation to get existence.  
We refer to \cite{Allaire,CoxLipton,MuratTartar,Oleinik}.

Let us provide the main lines strategy to investigate existence issues for Problem \eqref{Pg:TwoPhase}, according to \cite{Murat,MuratTartar}.
If the solution $(\hat{m},1+\alpha\hat{m})$ to the relaxed problem \eqref{Pb:OptimizationRelax}  is a solution to the original problem \eqref{Pg:TwoPhase}, then there exists a measurable subset $\hat{E}$ of $\O$ such that $\hat{m}=\kappa \mathds{1}_{\hat{E}}$. If furthermore $\hat{E}$ is assumed to be smooth enough, then, denoting by $\hat{u}$ the principal eigenfunction associated with $(\hat{m},\lambda_\alpha(m))=(\hat{m},\zeta_\alpha(\hat{m}, 1+\alpha \hat{m}))$, we get that $\hat{u}$ and $(1+\alpha \hat{m}) \frac{\partial \hat{u}}{\partial \nu}$ must be constant  on $\partial \hat{E}$.
The function $1+\alpha \hat{m}$ being discontinuous across $\partial E$, the optimality condition above has to be understood in the following sense: the function $(1+\alpha \hat{m}) \frac{\partial \hat{u}}{\partial \nu}$, a priori discontinuous, is in fact continuous across $\partial E$ and even constant on it.
Note that these arguments have been generalized in \cite{CoxLipton}. These optimality conditions, combined with Serrin's Theorem \cite{Serrin1971}, suggest that Problem \eqref{Pg:TwoPhase} could have a solution if, and only if $\O$ is a ball. The best results known to date are the following ones.
\begin{thmnonumbering}
\begin{itemize}
\item[(i)] Let $\O$ be an open set such that $\partial \O$ is $\mathscr C^2$ and connected. Problem \eqref{Pg:TwoPhase} has a solution if and only if $\O$ is a ball \cite{CasadoDiaz3}.
\item[(ii)] If $\O$ is a ball, then Problem \eqref{Pg:TwoPhase} has a solution which is moreover radially symmetric \cite{ConcaMahadevanSanz}.
\end{itemize}
\end{thmnonumbering}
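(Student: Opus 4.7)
The proof proceeds in three steps, combining Proposition~\ref{Th:BangBang}, shape calculus, and a Serrin-type rigidity argument.

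\textbf{Step 1: Bang-bang reduction.} The first claim follows directly from Proposition~\ref{Th:BangBang}. If a minimizer $\hat m$ were not bang-bang, the proposition would produce a bang-bang competitor $\tilde m\in\mathcal M_{m_0,\kappa}(\O)$ with $\lambda_\alpha(\tilde m)<\lambda_\alpha(\hat m)$, contradicting minimality. Hence $\hat m=\kappa\mathds{1}_{\hat E}$ for some measurable $\hat E\subset\O$ with $\kappa|\hat E|=m_0|\O|$.

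\textbf{Step 2: Optimality condition on $\partial\hat E$.} Assume $\partial\hat E$ is of class $\mathscr C^2$, and let $V\in\mathscr C^\infty_c(\O;\R^n)$ satisfy the linearized volume constraint $\int_{\partial\hat E}V\cdot\nu=0$. Setting $\hat E_t=(\operatorname{Id}+tV)\hat E$, first-order optimality requires $\frac{d}{dt}\lambda_\alpha(\kappa\mathds{1}_{\hat E_t})\big|_{t=0}=0$. Combining Lemma~\ref{Le:DeriveeL21}, the Hadamard formula for eigenvalues with jumping coefficients, and the transmission conditions
$$
\hat u|_{\mathrm{int}}=\hat u|_{\mathrm{ext}}, \qquad (1+\alpha\kappa)\,\partial_\nu\hat u|_{\mathrm{int}}=\partial_\nu\hat u|_{\mathrm{ext}} \quad\text{on }\partial\hat E,
$$
one identifies the resulting integrand on $\partial\hat E$ as a jump of the switching function $\psi_{\alpha,\hat m}=\alpha|\nabla\hat u|^2-\hat u^2$ and deduces that $|\partial_\nu\hat u|$ takes constant values on each side of $\partial\hat E$, related by the factor $1+\alpha\kappa$.

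\textbf{Step 3: Serrin-type rigidity.} Armed with this overdetermined boundary condition together with the PDEs $-(1+\alpha\kappa)\Delta\hat u=(\kappa+\lambda_\alpha(\hat m))\hat u$ in $\hat E$ and $-\Delta\hat u=\lambda_\alpha(\hat m)\hat u$ in $\O\setminus\hat E$, with $\hat u=0$ on $\partial\O$ and $\hat u>0$ in $\O$, one adapts Serrin's moving-planes argument~\cite{Serrin1971} in the spirit of Cox--Lipton~\cite{CoxLipton} and Murat--Tartar~\cite{MuratTartar} to conclude that $\hat u$ is radially symmetric about some point. Combined with the connectedness of $\partial\O$ and the simplicity of the principal eigenfunction, this forces $\partial\O$ to be a sphere, and hence $\O$ to be a ball.

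The main obstacle is Step 3: the classical Serrin theorem applies to $\Delta u=-1$ with \emph{both} Dirichlet and Neumann data constant on the boundary, whereas here we only control $|\nabla\hat u|$ on the interior free boundary $\partial\hat E$ and the governing equation is an eigenvalue problem. Bridging this gap relies on the rigidity framework of~\cite{CoxLipton} together with the Murat--Tartar characterization $\hat A\,\nabla\hat u=\Lambda_-(\hat m)\,\nabla\hat u$ stated above. The hypothesis $n\geq 2$ is essential, since in dimension one the analogous problem is well-posed on arbitrary intervals (see~\cite{CaubetDeheuvelsPrivat}).
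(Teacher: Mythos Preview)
You appear to have confused the stated theorem with Theorem~\ref{Th:NonExistence}. The statement concerns Problem~\eqref{Pg:TwoPhase}, the classical two-phase problem for the operator $u\mapsto -\nabla\cdot((1+\alpha m)\nabla u)$ \emph{without} the zero-order term $-mu$; it is a background result the paper merely cites from \cite{CasadoDiaz3} and \cite{ConcaMahadevanSanz} rather than proves. Your proposal, however, invokes Proposition~\ref{Th:BangBang}, the switching function $\psi_{\alpha,\hat m}=\alpha|\nabla\hat u|^2-\hat u^2$, and the equation $-(1+\alpha\kappa)\Delta\hat u=(\kappa+\lambda_\alpha(\hat m))\hat u$, all of which pertain to Problem~\eqref{Pb:OptimizationEigenvalue}. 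You also address only the ``only if'' half of (i), leaving untouched both existence when $\Omega$ is a ball and item (ii); for the latter the paper indicates that the Alvino--Trombetti rearrangement is the essential tool, and your argument contains no trace of it.

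Even read as a sketch of Theorem~\ref{Th:NonExistence}, Step~2 has a genuine gap. You deduce only that $|\partial_\nu\hat u|$ is constant on each side of $\partial\hat E$; you do \emph{not} show that $\hat u$ itself is constant there, which is the Dirichlet datum Serrin's theorem requires. The paper closes this gap via the \emph{homogenized} optimality condition: by Corollary~\ref{Co:Equivalence}, $(\hat m,1+\alpha\hat m)$ solves the relaxed problem~\eqref{Pb:OptimizationRelax}, and the resulting first-order condition~\eqref{Eq:OptimalityH} involves $\Psi_\alpha=\frac{\alpha}{1+\alpha\kappa}\hat\sigma^2|\nabla\hat u|^2-\hat u^2$ rather than your $\psi_{\alpha,\hat m}$. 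The extra factor $\hat\sigma^2$ is decisive: splitting $|\nabla\hat u|^2$ into normal and tangential parts and using continuity of the flux $\hat\sigma\,\partial_\nu\hat u$, the normal contributions match across $\partial\hat E$ and the inequality $\Psi_\alpha|_{\mathrm{int}}\le\Psi_\alpha|_{\mathrm{ext}}$ collapses to $(1+\alpha\kappa)^2|\nabla_\tau\hat u|^2\le|\nabla_\tau\hat u|^2$, forcing $\nabla_\tau\hat u=0$ and hence $\hat u$ constant on $\partial\hat E$. Without the $\hat\sigma^2$ factor this cancellation fails and the tangential-gradient argument is unavailable.
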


Regarding the second part of the theorem, the authors used a particular rearrangement coming to replace $1+\alpha m$ by its harmonic mean on each level-set of the eigenfunction. Such a rearrangement has been first introduced by Alvino and Trombetti \cite{AlvinoTrombetti}. This drives the author to reduce the class of admissible functions to radially symmetric ones, which allow them to conclude thanks to a compactness argument \cite{AlvinoTrombettiLions}. These arguments are mimicked to derive the existence part of Theorem~\ref{Th:RadialStability}.

Finally, let us mention \cite{ConcaLaurainMahadevan,Laurain}, where the optimality of annular configurations in the ball is investigated.
A complete picture of the situation is then depicted in the case where $\alpha$ is small, which is often referred to as the "low contrast regime". 
We also mention \cite{DambrineKateb} , where a shape derivative approach is undertaken to characterize minimizers when $\O$ is a ball.

\subsection{Proof of Theorem \ref{Th:NonExistence}} 

Let us assume the existence of a solution to Problem \eqref{Pb:OptimizationEigenvalue}, denoted $\hat{m}$.  According to Proposition~\ref{Th:BangBang}, there exists a measurable subset $\hat{E}$ of $\O$ such that 
$\hat{m}=\kappa \mathds{1}_{\hat{E}}$. Let us introduce $\hat{\sigma}:=1+\alpha \hat{m}$ and $\hat{u}$, the $L^2$-normalized eigenfunction associated to $\hat{m}$.

Let us now assume that $\partial \hat{E}$ is $\mathscr C^2$.
\paragraph{Step 1: derivation of optimality conditions.}
What follows is an adaptation of \cite{CoxLipton}. For this reason, we only recall the main lines. 
Let us write the optimality condition 
 for the problem 
$$\min_{m\in \mathcal M_{m_0,\kappa}(\O)}\min_{A\in M_m^\alpha}\zeta_\alpha(m,A)=\lambda_\alpha(\hat{m}),$$
where $\zeta_\alpha$ is given by \eqref{Eq:Zeta}. Let $h$ be an admissible perturbation at $\hat{m}$. In \cite{MuratTartar} it is is proved that for every $\e>0$ small enough, there exists a matrix-valued application $A_\e \in M_{\hat{m}+\e h}$ such that
$$
A_\e\n \hat{u}=\Lambda_-(\hat{m}+\e h)\n \hat{u}\quad \text{in }\O,
$$
where $\Lambda_-$ has been introduced in Definition \ref{De:Homo}.
Fix $\e$ as above. Since $(\hat{m},1+\alpha \hat{m})$ is a solution of the Problem \eqref{Pb:OptimizationRelax}, one has
\begin{align*}
\int_\O  A_\e \n \hat{u}\cdot \n \hat{u}-\int_\O (m+\e h)\hat{u}^2&\geq \zeta_\alpha(m+\e h,A_\e) \\
&\geq \zeta_\alpha(\hat{m},1+\alpha \hat{m})=\int_\O \hat{\sigma} |\n \hat{u}|^2-\int_\O \hat{m}\, \hat{u}^2.
\end{align*}
where one used the Rayleigh quotient definition of $\zeta_\alpha$ as well as the minimality of $(\hat{m},1+\alpha \hat{m})$.
Dividing the last inequality by $\e$ and passing to the limit yields
$$\int_\O h\left.\frac{d\Lambda_-(m)}{dm}\right|_{m=\hat{m}}|\n \hat{u}|^2-h\hat{u}^2\geq 0.$$
Using that $d\Lambda_-/dm=\alpha \Lambda_-(m)^2/(1+\alpha\kappa)$, and that $\hat{m}$ is a {\it bang-bang} function (so that $\Lambda_-(\hat{m})=\hat{\sigma}$),
we infer that the first order optimality conditions read: there exists $\mu\in \R$ such that
\begin{equation}\label{Eq:OptimalityH} 
\left\{\Psi_\alpha< \mu\right\}\subset \hat{E}\subset \left\{\Psi_\alpha\leq \mu\right\}\quad \text{where}\quad \Psi_\alpha:=\frac{\alpha}{1+\alpha\kappa}{\hat{\sigma}}^2|\n \hat{u}|^2-{\hat{u}}^2. 
\end{equation}
Since the flux $\hat{\sigma} \frac{\partial \hat{u}}{\partial \nu}$ is continuous across $\partial \hat{E}$, one has necessarily $\Psi_\alpha=\mu$ on $\partial \hat{E}\backslash \partial \Omega$.

Now, let us follow the approach used in \cite{MuratTartar} and \cite{CasadoDiaz3} to simplify the writing of the optimality conditions. Notice first that $\hat{u}$ and $\hat{\sigma}^2 \left|\frac{\partial \hat{u}}{\partial \nu_{\hat{E}}}\right|^2$ are continuous across $\partial \hat{E}$. Let  $\n_\tau \hat{u}$ denote the tangential gradient of $\hat{u}$ on $\partial \hat{E}$.
 For the sake of clarity, the quantities computed on $\partial \hat{E}$ seen as the boundary of $\hat{E}$ will be denoted with the subscript $int$, whereas the ones computed on $\partial \hat{E}$ seen as part of the boundary of $\hat{E}^c$ will be denoted with the subscript $ext$. According to the optimality conditions \eqref{Eq:OptimalityH},  one has 
 \begin{align*}
\begin{split}
\left.\frac\alpha{1+\alpha\kappa} \hat{\sigma}^2 \left|\n_\tau \hat{u}\right|^2+\frac\alpha{1+\alpha\kappa} \hat{\sigma}^2\left(\frac{\partial \hat{u}}{\partial \nu}\right)^2-\hat{u}^2\right|_{int}
\leq \left.\frac\alpha{1+\alpha\kappa} \hat{\sigma}^2 \left|\n_\tau \hat{u}\right|^2+\frac\alpha{1+\alpha\kappa} \hat{\sigma}^2\left(\frac{\partial \hat{u}}{\partial \nu}\right)^2-\hat{u}^2\right|_{ext}
\end{split}\end{align*}
 on $\partial\hat{E}\backslash \partial \Omega$. By continuity of the flux $\hat{\sigma} \frac{\partial \hat{u}}{\partial \nu}$, we infer that 
$\alpha \hat{\sigma}^2 \left.\left|\n_\tau \hat{u}\right|^2\right|_{int}\leq \alpha \hat{\sigma}^2 \left.\left|\n_\tau \hat{u}\right|^2\right|_{ext}$ which comes to $(1+\alpha \kappa) \left.\left|\n_\tau \hat{u}\right|^2\right|_{int}\leq \left.\left|\n_\tau \hat{u}\right|^2\right|_{ext}$. Since $ \left.\left|\n_\tau \hat{u}\right|^2\right|_{int}= \left.\left|\n_\tau \hat{u}\right|^2\right|_{ext}$,  we have $\left.\n_\tau \hat{u}\right|_{\partial \hat{E}}=0$. Therefore, $\hat{u}$ is constant on $\partial \hat{E}\backslash \partial \Omega$ and since $\Psi_\alpha$ is constant on $\partial \hat{E}\backslash \partial \Omega$, it follows that $\left|\frac{\partial \hat{u}}{\partial \nu}\right|^2_{int}$ is constant as well on $\partial \hat{E}\backslash \partial \Omega$.

To {sum up}, the first order necessary conditions drive to the following condition:
\begin{equation}\label{Eq:Opt}
\text{The functions }\hat{u}\text{ and } |\n \hat{u}|\text{ are constant on $\partial \hat{E}\backslash \partial \Omega$.}
\end{equation}

\paragraph{Step 2: proof that $\O$ is necessarily a ball.} 
To prove that $\O$ is a ball, we will use Serrin's Theorem, that we recall hereafter.
\begin{thmnonumbering}\cite[Theorem 2]{Serrin1971}
Let $\mathscr E$ be a connected domain with a $\mathscr C^2$ boundary, $h$ a $\mathscr C^1(\R;\R)$ function and let $f\in \mathscr C^2\left(\overline{\mathscr E}\right)$ be a function satisfying
$$
-\Delta f= h(f)\,, \quad f>0 \text{ in }\mathscr E,\quad  f=0\text{ on }\partial \mathscr E,\quad \frac{\partial f}{\partial \nu}\text{ is constant on }\partial \mathscr E.$$
Then $\mathscr E$ is a ball and $f$ is radially symmetric.
\end{thmnonumbering}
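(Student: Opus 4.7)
The plan is to follow Serrin's original method of moving planes, which uses the overdetermined boundary condition to force symmetry in every direction. Fix a unit vector $e\in\mathbb S^{n-1}$ and, for each $\lambda\in\R$, set $T_\lambda=\{x\cdot e=\lambda\}$ and $\Sigma_\lambda=\{x\in\mathscr E:x\cdot e>\lambda\}$; let $\Sigma_\lambda'$ denote the reflection of $\Sigma_\lambda$ across $T_\lambda$. For $\lambda$ slightly below $\max_{\mathscr E}x\cdot e$, the $\mathscr C^2$ regularity of $\partial\mathscr E$ guarantees that $\Sigma_\lambda'\subset\mathscr E$. Decreasing $\lambda$, define
\[
\lambda^*(e):=\inf\bigl\{\lambda\in\R:\Sigma_\mu'\subset\mathscr E\text{ for every }\mu\in(\lambda,\max_{\mathscr E}x\cdot e)\bigr\},
\]
and the aim is to prove that at $\lambda=\lambda^*(e)$ the set $\mathscr E$ is symmetric with respect to $T_{\lambda^*(e)}$.

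Next I would introduce the reflection $f_\lambda(x):=f(x^\lambda)$, where $x^\lambda$ is the image of $x$ through $T_\lambda$, and set $w_\lambda:=f-f_\lambda$ on $\overline{\Sigma_\lambda'}$. Because $h\in\mathscr C^1$, the mean value theorem produces a bounded coefficient $c_\lambda$ with $-\Delta w_\lambda=c_\lambda w_\lambda$ in $\Sigma_\lambda'$. Along the flat part $T_\lambda\cap\partial\Sigma_\lambda'$ one has $w_\lambda=0$ since $x^\lambda=x$ there, while along the curved part of $\partial\Sigma_\lambda'$, which is precisely the reflection of $\partial\mathscr E\cap\{x\cdot e>\lambda\}$, one has $f_\lambda=0$ and thus $w_\lambda=f\geq 0$. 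A standard application of the weak and strong maximum principles then yields $w_\lambda\geq 0$ in $\Sigma_\lambda'$ for every $\lambda>\lambda^*(e)$, and by continuity also at $\lambda=\lambda^*(e)$.

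At the critical value $\lambda^*(e)$, one of two obstructions must occur: either (a) $\partial\Sigma_{\lambda^*}'$ touches $\partial\mathscr E$ at an interior point $P\notin T_{\lambda^*}$, or (b) there exists $Q\in\partial\mathscr E\cap T_{\lambda^*}$ at which the outer unit normal $\nu$ of $\partial\mathscr E$ is perpendicular to $e$. In case (a), the reflected boundary meets $\partial\mathscr E$ transversally, and Hopf's boundary point lemma applied to the nonnegative solution $w_{\lambda^*}$ at $P$ would give $\partial_\nu w_{\lambda^*}(P)<0$ unless $w_{\lambda^*}\equiv 0$; but the hypothesis $\partial_\nu f\equiv c$ on $\partial\mathscr E$ translates, after reflection, into $\partial_\nu w_{\lambda^*}(P)=0$, which forces $w_{\lambda^*}\equiv 0$ in $\Sigma_{\lambda^*}'$ and hence symmetry across $T_{\lambda^*}$. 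Case (b) is the main obstacle: at $Q$ the two surfaces $\partial\mathscr E$ and the reflected $\partial\Sigma_{\lambda^*}'$ are tangent of order at least two, so Hopf's classical lemma no longer applies.

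To handle case (b) one needs Serrin's \emph{maximum principle at a corner}: expanding $w_{\lambda^*}$ in Taylor series at $Q$ along a direction $\xi$ bisecting the two tangent half-spaces, and combining $-\Delta w_{\lambda^*}=c_{\lambda^*}w_{\lambda^*}$ with $w_{\lambda^*}(Q)=0$ and $\partial_\nu w_{\lambda^*}(Q)=0$, one derives an inequality for $\partial_\xi^2 w_{\lambda^*}(Q)$ that is inconsistent with $w_{\lambda^*}\geq 0$ near $Q$ unless $w_{\lambda^*}\equiv 0$. Once symmetry across $T_{\lambda^*(e)}$ is established for every $e\in\mathbb S^{n-1}$, intersecting the symmetry hyperplanes corresponding to $\pm e$ shows they all pass through a single point $x_0$, so $\mathscr E$ is a ball centered at $x_0$ and $f$ is radial about $x_0$. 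The principal technical hurdle is precisely the corner case, and Serrin's innovation was the refined boundary estimate that overcomes it without any a priori convexity or star-shapedness hypothesis on $\mathscr E$.
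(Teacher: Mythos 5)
This statement is not proved in the paper: it is Serrin's 1971 symmetry theorem, quoted verbatim and cited as \cite[Theorem 2]{Serrin1971}, and it is used as a black box in Step 2 of the proof of Theorem \ref{Th:NonExistence}. There is therefore no paper proof to compare against; what you have written is essentially a faithful outline of Serrin's original moving-planes argument, including the two stopping configurations (internal tangency versus perpendicularity of the normal to the sweep direction), the use of the constant Neumann datum to kill Hopf's boundary inequality in the tangency case, and the corner maximum principle in the perpendicular case.

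There is, however, one step that as written is a genuine gap. You assert that ``a standard application of the weak and strong maximum principles then yields $w_\lambda\geq 0$ in $\Sigma_\lambda'$ for every $\lambda>\lambda^*(e)$.'' This is not standard, because $c_\lambda$ has no sign: for an equation $-\Delta w_\lambda=c_\lambda w_\lambda$ with a possibly positive zero-order coefficient, the implication ``$w_\lambda\geq 0$ on $\partial\Sigma_\lambda'$ $\Rightarrow$ $w_\lambda\geq 0$ in $\Sigma_\lambda'$'' can fail on a general domain (the operator $-\Delta-c_\lambda$ may have a negative first Dirichlet eigenvalue on $\Sigma_\lambda'$). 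What actually carries the sweep in Serrin's argument is a continuity/connectedness scheme: the narrow-domain maximum principle gives $w_\lambda\geq 0$ for $\lambda$ close to $\max_{\mathscr E}x\cdot e$; then, for any $\lambda$ at which $w_\lambda\geq 0$ and $w_\lambda\not\equiv 0$, the strong maximum principle yields $w_\lambda>0$ in $\Sigma_\lambda'$ and Hopf's lemma on the flat part $T_\lambda$ gives $\partial_e f<0$ there, and these strict inequalities (together with another application of the narrow-domain principle on the thin newly uncovered region) let one decrease $\lambda$ a little further. Without this argument the passage to cases (a) and (b) at $\lambda^*(e)$ is not justified. A minor point at the end: the hyperplanes from $e$ and $-e$ coincide, so one should instead observe that each $T_{\lambda^*(e)}$ is a hyperplane of reflection symmetry of $\mathscr E$ and hence passes through the barycenter, and a bounded connected open set with a hyperplane of symmetry through a fixed point in every direction is a ball.
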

According to \eqref{Eq:Opt}, let us introduce $\hat{\mu}=\left.\hat{u}\right|_{\partial \hat{E}}$.  One has $\hat{\mu}>0$ by using the maximum principle. Let us set $f=\hat{u}-\hat{\mu}$, $h(z)=\left(\lambda_\alpha(\hat{m})+\kappa\right)z$ and call $\mathscr E$ a given connected component of $\hat{E}$. By assumption, $\hat{E}$ is a $\mathscr C^2$ set, and, according to \eqref{Eq:Opt}, the function $\partial \left(\hat{u}-\hat{\mu}\right)/\partial \nu$ is constant on $\partial \hat{E}$.

The next result allows us to verify the last assumption of Serrin's theorem.

\begin{lemma}\label{Eq:Opt2}
There holds $\hat{u}>\hat{\mu}$  in $\hat{E}$.
\end{lemma}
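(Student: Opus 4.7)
The core of the argument is that, on the set $\hat E$, the coefficient $\hat m$ is identically equal to $\kappa$, so the eigenvalue equation \eqref{Eq:EigenFunction} degenerates into a constant coefficient equation. Dividing through by $1+\alpha\kappa>0$, I obtain
\begin{equation*}
-\Delta \hat u \;=\; c\,\hat u \quad \text{in } \hat E, \qquad \text{with} \quad c\;:=\;\frac{\lambda_\alpha(\hat m)+\kappa}{1+\alpha\kappa}.
\end{equation*}
Once $c>0$ is established, this identity, combined with the strict positivity of $\hat u$ in $\O$ granted by the Krein--Rutman theorem, shows that $\hat u$ is strictly superharmonic in $\hat E$, and the conclusion should then follow from a componentwise strong maximum principle comparison with the boundary value $\hat\mu$.

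The first step is to check that $c>0$. For every $u\in\wo\setminus\{0\}$, using $\hat m\leq \kappa$ and $1+\alpha\hat m\geq 1$ one may rewrite
\begin{equation*}
R_{\alpha,\hat m}(u)+\kappa\;=\;\frac{\int_\O(1+\alpha\hat m)|\n u|^2+\int_\O(\kappa-\hat m)u^2}{\int_\O u^2}\;\geq\;\frac{\int_\O|\n u|^2}{\int_\O u^2}.
\end{equation*}
Taking the infimum over $u$ and invoking \eqref{Eq:Rayleigh}, $\lambda_\alpha(\hat m)+\kappa$ is bounded below by the first Dirichlet eigenvalue of $-\Delta$ on $\O$, which is strictly positive; hence $c>0$.

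The second step is the componentwise application of the strong maximum principle. Fix a connected component $\mathscr E$ of $\hat E$ with $\overline{\mathscr E}\subset\O$. By the first order optimality condition \eqref{Eq:Opt}, $\hat u\equiv\hat\mu$ on $\partial\mathscr E$, so the function $w:=\hat u-\hat\mu$ satisfies $w=0$ on $\partial\mathscr E$ and $-\Delta w=c\hat u>0$ in $\mathscr E$. The weak maximum principle gives $w\geq 0$ in $\mathscr E$, and the strong maximum principle forbids an interior zero of $w$ unless $w\equiv 0$ in $\mathscr E$; but this degenerate alternative would force $0=-\Delta\hat\mu=c\hat\mu>0$, contradicting $\hat\mu>0$ (already asserted just before the Lemma by the maximum principle). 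Consequently $w>0$ in $\mathscr E$, as claimed.

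The delicate point I expect is the case of a connected component of $\hat E$ whose closure meets $\partial\O$: there $\hat u=0$ on $\partial\mathscr E\cap\partial\O$ while $\hat u=\hat\mu>0$ on $\partial\mathscr E\setminus\partial\O$, so no pointwise inequality $\hat u>\hat\mu$ can survive near $\partial\O$. My plan for this main obstacle is twofold: either (i) observe that Serrin's theorem is subsequently invoked on a single component of $\hat E$ which can always be chosen strictly interior to $\O$, so the Lemma as used only concerns such components; or (ii) rule out boundary-touching components by combining the optimality condition ``$|\n\hat u|$ constant on $\partial\hat E\setminus\partial\O$'' with the Hopf lemma at $\partial\O$, which should force an incompatibility between the two resulting values of the normal derivative.
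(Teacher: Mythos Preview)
Your argument is correct and follows the same overall strategy as the paper (positivity of the coupling constant, then a maximum-principle argument), but your second step is more direct. The paper writes the equation for $v=\hat u-\hat\mu$, drops the source term $(\lambda_\alpha(\hat m)+\kappa)\hat\mu$ to obtain the differential inequality $-\Delta \tilde v\geq (\lambda_\alpha(\hat m)+\kappa)\tilde v$ for the zero-extension $\tilde v$, and then multiplies by $\tilde v_-$ and compares with the Dirichlet eigenvalue of $\O$ to force $\tilde v_-=0$. You instead keep the full right-hand side $c\hat u>0$, which makes $w$ strictly superharmonic and lets the classical weak and strong maximum principles conclude immediately. Your route avoids the extension-by-zero and the energy comparison, at the price of working component by component; both yield the same conclusion.

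Your caveat about components of $\hat E$ touching $\partial\O$ is well taken and is exactly how the paper proceeds: in Step~2 the Lemma is invoked only after observing that, in the case under consideration, $\hat E$ does not intersect $\partial\O$, so your option~(i) matches the intended use. The statement of the Lemma is implicitly meant under that hypothesis.
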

 For the sake of clarity, the proof of this lemma is postponed to the end of this section.\\
 
Let us now come back to the proof that $\O$ is necessarily a ball. Take $x\in\partial \O$. Then $x$ belongs either to the closure of $\Omega \backslash \hat E$ or to the closure of $\hat{E}$. In the first case, there exists a connected component $\Gamma$ of $\partial \big(\O\backslash \hat{E}\big)$ which contains $x$. 
 
 Let us assume by contradiction that this connected component also intersects $\O$, then according to \eqref{Eq:Opt}, one has $\hat{u}$ constant on $\Gamma \cap \O$ and according to the Dirichlet boundary conditions, one has $\hat{u}=0$ on $\Gamma$, and hence $\hat{u}$ reaches its minimal value in the open set $\O$. According to the strong maximum principle, one gets that $\hat{u}(\cdot)=0$, and we have reached a contradiction. 
 
 Hence, $\Gamma \subset \partial\O$ and $\Gamma$ is connected. Similarly, if $x$ belongs to the closure of $\hat{E}$, then there exists a connected component of $\partial\hat{E}$ containing $x$, which is included in $\partial\O$. Hence, $\partial \O$ is the union of closed connected components of the boundaries of $\hat{E}$ and $\O\backslash \hat{E}$. As $\partial\O$ is connected by hypothesis, there only exists one such connected component, that we denote $\Gamma$. This implies in particular that if $\partial \hat E\cap \partial \O\neq \emptyset$, then $\partial (\O\backslash \hat E)\cap\partial \O=\emptyset$, and conversely, if $\partial\hat{E}^c \cap\partial \O\neq \emptyset$, then $\partial \hat E\cap \partial \O=\emptyset$.
  
{Assume first that the closure of $\O\backslash \hat{E}$ meets $\partial \O$. As $\hat{E}$ does not intersect $\partial \O$ in this case, one has $\hat{u}$ and $|\nabla \hat{u}|$ constant over the whole boundary of $\hat{E}$ by (\ref{Eq:Opt}) and thus Serrin's theorem applies: any connected component of $\hat{E}$ is a ball and $\hat{u}$ is radially symmetric over it.  We can hence fix a ball $\mathbb B(x_0;a_1)\subset \hat E$ such that $\hat u$ is radially symmetric in it.}

 Let us now consider the largest ball $\mathbb{B}(x_{0};a)\subset \O$ such that $\hat u$ is radially symmetric in $\mathbb B(x_0;a)$. If $\partial \mathbb B(x_0;a)\cap \partial \O\neq \emptyset$, then $\hat u=0$ on $\partial \mathbb B(x_0;a)$ which, by the maximum principle, implies that $\O=\mathbb B(x_0;a)$. We can hence assume that $\hat u>0$ on $\partial \mathbb B(x_0;a)$. Let us define 
 $$\mu_a:=\left.\frac{\alpha}{1+\alpha\kappa}{\hat{\sigma}}^2|\n \hat{u}|^2-{\hat{u}}^2\right|_{\partial \mathbb B(x_0;a)}.$$ If $\mu_a<\mu$ then by continuity of $\frac{\alpha}{1+\alpha\kappa}{\hat{\sigma}}^2|\n \hat{u}|^2-{\hat{u}}^2$, it follows that $m=0$ on $\mathbb B(x_0;a+\delta)\backslash \mathbb B(x_0;a-\delta)$. Thus, applying the Cauchy-Kovalevskaya theorem to all the tangential derivatives of $\hat{u}$ yields that $\hat{u}$ is radially symmetric in $\mathbb B(x_0;a+\delta)$ which is a contradiction with the definition of $a$. Indeed, the same arguments as \cite[Proof of Theorem~1, Part~2]{LamboleyLaurainNadinPrivat} would yield that if one writes $\hat{u}(x)=U(r)$, with $r:=|x-x_{0}|$, then there exists $\delta>0$ such that $U'(r)=0$ for all $r\in [a,a+\delta)$. Hence $u$ would remain constant on the annulus $\{a<|x-x_{0}<a+\delta\}$, yielding a contradiction.

The same reasoning yields the same contradiction if $\mu_a>\mu$, in which case $m=\kappa$ on $\mathbb B(x_0;a+\delta)\backslash \mathbb B(x_0;a-\delta)$. Finally, if $\mu_a=\mu$ then if follows from Lemma \ref{Eq:Opt2} that either $m=0$ in $\mathbb B(x_0;a)\backslash \mathbb B(x_0;a-\delta)$ and $m=\kappa$ in $\mathbb B(x_0;a+\delta)\backslash \mathbb B(x_0;a)$ or $m=\kappa$ in $\mathbb B(x_0;a)\backslash \mathbb B(x_0;a-\delta)$ and $m=0$ in $\mathbb B(x_0;a+\delta)\backslash \mathbb B(x_0;a)$. In both case, one can apply Carleman's unique continuation Theorem as done in \cite[Proof of Theorem 2.1, Step 3]{CasadoDiaz3} to conclude that $\hat u$ is radially symmetryc in $\mathbb B(x_0;\delta)$.
 
{In the case where $\partial\hat{E}$ meets $\partial \O$, then the boundary of $\O\backslash \hat{E}$ does not, and we conclude by using the same reasoning on $\O\backslash \hat{E}$ instead of $\hat{E}$, showing that $\hat{u}<\hat{\mu}$ on $\O\backslash \hat{E}$ and applying Serrin's theorem to $\hat{\mu}-\hat{u}$. }

%
%

\begin{proof}[Proof of Lemma \ref{Eq:Opt2}]
Let us set $v=\hat{u}-\hat{\mu}$, hence $v$ solves
\begin{equation}
\left\{\begin{array}{ll}
-\Delta v=\left(\lambda_\alpha(\hat{m})+\kappa\right)v+\left(\lambda_\alpha(\hat{m})+\kappa\right)\hat{\mu} & \text{ in }\hat{E},\\
v=0 \text{ on }\partial\hat{E}.
\end{array}\right.
\end{equation}
and we are led to show that $v>0$ in $\hat{E}$. Let $\lambda^D(\O)$ be the first Dirichlet eigenvalue\footnote{In other words 
\begin{equation}\label{Eq:LambdaD}\lambda^D(\O)=\inf_{u\in W^{1,2}_0( \O)\,,\int_{\O}u^2=1}\int_\O|\n u|^2>0.\end{equation}} of the Laplace operator in $E$.
By using the Rayleigh quotient \eqref{Eq:Rayleigh} we have
\begin{align*}
\lambda_\alpha(\hat{m})&= \min_{u\in W^{1,2}_0(\O)\,,u\neq 0}R_{\alpha,m}(u)
>  \min_{u\in W^{1,2}_0(\O)\,,u\neq 0}\frac{\frac12\int_\O|\n u|^2-\kappa\int_\O u^2}{\int_\O u^2}
=\lambda^D(\O)-\kappa,
\end{align*}
so that 
$\lambda_\alpha(\hat{m})+\kappa> \lambda^D(\O)>0$.
Now, since $v=0$ on $\partial \hat{E}$ and that $\hat{E}$ is a $\mathscr C^2$ open subset of $\O$, the extension  $\tilde v$ of $v$ by zero outside $\hat{E}$ belongs to $W^{1,2}_0(\O)$. Since $(\lambda_\alpha(\hat{m})+\kappa)$ and $\hat{\mu}$ are non-negative, we get
$$
-\Delta \tilde v\geq (\lambda_\alpha(\hat{m})+\kappa)\tilde v\text{ in }\hat{E}
$$
Splitting $\tilde v$ into its positive and negative parts as $\tilde v=\tilde v_+-\tilde v_-$ and multiplying the equation by $\tilde v_-$ we get after an integration by parts
$$-\int_{\O} |\n \tilde v_-|^2=-\int_{\hat{E}} |\n \tilde v_-|^2\geq -(\lambda_\alpha(\hat{m})+\kappa)\int_{\hat{E}}  \tilde v_-^2=-(\lambda_\alpha(\hat{m})+\kappa)\int_{\O}  \tilde v_-^2.$$
Using that $\lambda_\alpha(\hat{m})+\kappa> \lambda^D(\O)>0$, we get
$$\int_\O |\n \tilde v_-|^2\leq (\lambda_\alpha(\hat{m})+\kappa)\int_\O \tilde v_-^2<\lambda^D(\O)\int_\O \tilde v_-^2,
$$
which, combined with the Rayleigh quotient formulation of $\lambda^D(\O)$ yields $\tilde v_-=0$. Hence $v$ is nonnegative  in $\hat{E}$.
Using moreover that $(\lambda_\alpha(\hat{m})+\kappa)\geq 0$ and $\hat{\mu}\geq 0$ yields  that $-\Delta v\geq 0$  in $\hat{E}$
Notice that $v$ does not vanish identically in $\hat{E}$. Indeed, $u$ would otherwise be constant in $\hat{E}$ which cannot arise because of \eqref{Eq:EigenFunction}. According to the strong maximum principle, we infer that $v>0$ in $\hat{E}$.
\end{proof}

\begin{remark}
Following the arguments by Casado-Diaz in \cite{CasadoDiaz3}, it would be possible to weaken the regularity assumption on $E$ provided that we assume the stronger hypothesis that $\partial\O$ is simply connected. Indeed, in that case, assuming that $E$ is only of class $\mathscr C^1$ leads to the same conclusion. 
\end{remark}

\section{Proof of Theorem \ref{Th:RadialStability}}
Throughout this section, $\O$ will denote the ball $\mathbb B(0,R)$, which will also be denoted $\B$ for the sake of simplicity. Let $r^*_0\in (0,R)$ be chosen in such a way that 
$m^*_0=\kappa \mathds 1_{\mathbb B(0,r^*_0)}$ belongs to $\mathcal M_{m_0,\kappa}(\O).$ Let us introduce the notation $E_0^*=\B(0,r_0^*)$.

The existence part of { Theorem \ref{Th:RadialStability}} follows from a straightforward adaptation of \cite{ConcaMahadevanSanz}. In what follows, we focus on the second part of this theorem, that is, the stationarity of minimizers provided  $\alpha$ is small enough.
\subsection{Steps of  the proof for the stationarity } \label{sec:stepproofstationarity}
We argue by contradiction, assuming that, for any $\alpha>0$, there exists a radially symmetric distribution $\tilde m_{\alpha}$ such that $\lambda_\alpha(\tilde m_{\alpha})<\lambda_\alpha(m^*_0)$. Consider the resulting sequence $\{\tilde m_\alpha\}_{\alpha > 0}$.
\begin{itemize}
\item \textbf{Step 1:} we prove that $\{\tilde m_\alpha\}_{\alpha \to 0}$ converges strongly to $m^*_0$ in $L^1$, as $\alpha \to 0$. Regarding the associated eigenfunction, we prove that $\{u_{\alpha,m_\alpha}\}_{\alpha> 0}$ converges strongly to $u_{0,m^*_0}$ in $\mathscr C^0$ and that $\alpha \n u_{\alpha, m_\alpha}$ converges to 0 in $L^\infty(\B)$, as $\alpha\to 0$.
\item \textbf{Step 2:} by adapting \cite[Theorem 3.7]{Laurain}, we  prove that we can {restrict} ourselves to considering {\it bang-bang}  radially symmetric distributions of resources $\tilde m_{\alpha}=\kappa \mathds 1_{\tilde E}$ such that the Hausdorff distance $d_H(\tilde E,E_0^*)$ is arbitrarily small.
\item \textbf{Step 3:} this is the main innovation of the proof.
Introduce $h_\alpha=\tilde m_\alpha-m^*_0$, and consider the path $\{m_t\}_{t\in [0,1]}$ from $m_\alpha$ to $m^*_0$ defined by $m_t=m_0^*+th_\alpha$. We then consider the mapping 
$$
f_\alpha:t\mapsto \zeta_\alpha(m_t,\Lambda_-(m_t))
$$ 
where $\zeta_\alpha$ and $\Lambda_-(m_t)$ are respectively given by { equation~\eqref{Eq:Zeta} and definition~\ref{De:Homo}}.  Notice that, since $m^*_0$ and $\tilde m_\alpha$ are {\it bang-bang}, $f_\alpha(0)=\lambda_\alpha(m^*_0)$ and $f_\alpha(1)=\lambda_\alpha(m_\alpha)$ according to Def. \ref{De:Homo}. Let $u_t$ be a $L^2$ normalized eigenfunction associated with $(m_t,\Lambda_-(m_t))$, in other words a solution to the equation 
\begin{equation}\label{Eq:Ut}
\left\{\begin{array}{ll}
-\n\cdot\left( \Lambda_-(m_t)\n u_t\right)=\zeta_\alpha(m_t,\Lambda_-(m_t)) u_t+m_tu_t & \text{ in }\B\\
u_t=0 & \text{ on }\partial \B\\
\int_\B u_t^2=1. & 
\end{array}\right.
\end{equation}
According to the proof of the optimality conditions \eqref{Eq:OptimalityH}, one has 
$$
f_\alpha'(t)=\int_\B h_\alpha\left(\frac\alpha{1+\alpha\kappa} \Lambda_-(m_t)^2|\n u_t|^2-u_t^2   \right).
$$
Applying the mean value theorem yields the existence of $t_1\in (0,1)$ such that $\lambda_\alpha(\tilde m_\alpha)-\lambda_\alpha(m^*_0)={f_{\alpha}'(t_1)}$. This enables us to show that, for $t\in [0,1]$ and $\alpha$ small enough, one has
$$
f_\alpha'(t)\geq C \int_\B |h_\alpha| \operatorname{dist}(\cdot, \S)
$$ for some $C>0$, giving in turn $\lambda_\alpha(m_\alpha)-\lambda_\alpha(m^*_0)\geq C \int_\B |h_\alpha| \operatorname{dist}(\cdot, \S).$ (we note that the same quantity is obtained in \cite{Laurain}. Nevertheless, we obtain it in a more straightforward manner which bypasses the exact decomposition of eigenfunctions and eigenvalues used there.).
\end{itemize}

Let us now provide the details of each step.

\subsection{Step 1: convergence of quasi-minimizers and of sequences of eigenfunctions}
We first investigate the convergence of quasi-minimizers.
\begin{lemma}\label{Le:Convergence}
Let $\{m_\alpha\}_{\alpha>0}$ be a sequence in $\mathcal M_{m_0,\kappa}(\O)$ such that,  
\begin{equation}\label{eq:lem:lambalphaineg}
\forall \alpha>0, \quad \lambda_\alpha(m_\alpha)\leq \lambda_\alpha(m^*_0).
\end{equation}
Then, $\{m_\alpha\}_{\alpha>0}$ converges strongly to $m^*_0$ in $L^1(\O)$.
\end{lemma}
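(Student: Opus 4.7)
The natural strategy is to pass to the limit $\alpha \to 0$ and exploit the known uniqueness of $m^*_0$ as the minimizer of $\lambda_0$ in $\mathcal M_{m_0,\kappa}(\O)$ recalled in the excerpt. First, I would establish explicit bounds comparing $\lambda_\alpha$ with $\lambda_0$: since $\alpha m \geq 0$, the Rayleigh quotient immediately gives $\lambda_\alpha(m) \geq \lambda_0(m)$ for every admissible $m$, while plugging the $L^2$-normalized eigenfunction $u_{0,m^*_0}$ into $R_{\alpha,m^*_0}$ yields $\lambda_\alpha(m^*_0) \leq \lambda_0(m^*_0) + \alpha\kappa \int_\O |\nabla u_{0,m^*_0}|^2$. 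Combining these with the hypothesis \eqref{eq:lem:lambalphaineg} produces $\lambda_0(m_\alpha) \leq \lambda_0(m^*_0) + C\alpha$ for some $C$ independent of $\alpha$.

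Since $\{m_\alpha\}_{\alpha>0}$ is bounded in $L^\infty(\O)$, the plan is to extract a subsequence (not relabeled) converging weakly-$\star$ to some $m^\infty \in \mathcal M_{m_0,\kappa}(\O)$, and show $\lambda_0(m^\infty) \leq \lambda_0(m^*_0)$, which by uniqueness will force $m^\infty = m^*_0$. The required lower semicontinuity of $\lambda_0$ is obtained by a standard compactness argument: let $u_\alpha$ denote the $L^2$-normalized principal eigenfunction for $\lambda_0(m_\alpha)$. The previous bounds ensure that $\int_\O |\nabla u_\alpha|^2 = \lambda_0(m_\alpha) + \int_\O m_\alpha u_\alpha^2$ is uniformly controlled, so along a subsequence $u_\alpha \rightharpoonup u^\infty$ weakly in $W^{1,2}_0(\O)$ and strongly in $L^2(\O)$ by the Rellich theorem. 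Since $u_\alpha^2 \to (u^\infty)^2$ in $L^1(\O)$ and $m_\alpha \rightharpoonup m^\infty$ weakly-$\star$, a splitting argument yields $\int_\O m_\alpha u_\alpha^2 \to \int_\O m^\infty (u^\infty)^2$ together with $\int_\O (u^\infty)^2 = 1$. Combined with the weak lower semicontinuity of the Dirichlet energy, this gives
\begin{equation*}
\lambda_0(m^\infty) \leq R_{0,m^\infty}(u^\infty) \leq \liminf_{\alpha \to 0} \lambda_0(m_\alpha) \leq \lambda_0(m^*_0),
\end{equation*}
and hence $m^\infty = m^*_0$ by uniqueness.

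The last step is to upgrade the weak-$\star$ convergence to strong convergence in $L^1(\O)$. I would exploit the \emph{bang-bang} structure of $m^*_0 = \kappa \mathds{1}_{E^*_0}$, which satisfies $(m^*_0)^2 = \kappa\, m^*_0$ pointwise. On the one hand, $0 \leq m_\alpha \leq \kappa$ implies $\int_\O m_\alpha^2 \leq \kappa \int_\O m_\alpha$, and the right-hand side converges to $\kappa \int_\O m^*_0 = \int_\O (m^*_0)^2$ by weak-$\star$ convergence and the mass constraint. On the other hand, $\|\cdot\|_{L^2(\O)}$ is weakly lower semicontinuous, so $\liminf_\alpha \int_\O m_\alpha^2 \geq \int_\O (m^*_0)^2$. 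Together these yield $\|m_\alpha\|_{L^2} \to \|m^*_0\|_{L^2}$, and norm convergence combined with weak $L^2$-convergence (itself implied by the weak-$\star$ $L^\infty$ convergence on the bounded domain $\O$) upgrades to strong convergence in $L^2(\O)$, and hence in $L^1(\O)$ by Cauchy--Schwarz. Uniqueness of the limit $m^*_0$ ensures that the whole sequence $\{m_\alpha\}$ converges and not merely a subsequence. The main technical point in the scheme above is the passage to the limit in the bilinear term $\int_\O m_\alpha u_\alpha^2$, which relies crucially on decoupling the weak-$\star$ convergence of $m_\alpha$ from the strong $L^2$-convergence of $u_\alpha$; all remaining steps are Faber--Krahn-type continuity statements.
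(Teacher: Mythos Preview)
Your proof is correct and follows the same overall scheme as the paper---extract a weak-$\star$ limit, identify it as $m^*_0$ via the uniqueness of the minimizer of $\lambda_0$, then upgrade to strong convergence---but the execution differs in two places worth noting.

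First, you immediately reduce to the $\alpha=0$ problem through the sandwich $\lambda_0(m_\alpha)\leq\lambda_\alpha(m_\alpha)\leq\lambda_\alpha(m^*_0)\leq\lambda_0(m^*_0)+C\alpha$, and then work with the eigenfunctions of $-\Delta-m_\alpha$ rather than of $\mathcal L^\alpha_{m_\alpha}$. The paper instead keeps the full operator, bounds $\lambda_\alpha(m_\alpha)$ above and below directly, and passes to the limit in the \emph{weak formulation} of \eqref{Eq:EigenFunction} to identify the limiting eigenpair. Your reduction is a clean shortcut: it avoids having to argue that the term $\alpha m_\alpha|\nabla u_\alpha|^2$ disappears in the limit, at the small cost of introducing a second family of eigenfunctions.

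Second, for the strong $L^1$ convergence the paper invokes the general fact (from \cite{HenrotPierre}) that weak-$\star$ convergence in $\mathcal M_{m_0,\kappa}(\O)$ towards an extreme point is automatically strong in $L^1$. Your argument---using $0\leq m_\alpha\leq\kappa$ and $(m^*_0)^2=\kappa m^*_0$ to force $\|m_\alpha\|_{L^2}\to\|m^*_0\|_{L^2}$ and then combining with weak $L^2$ convergence---is exactly the hands-on version of that abstract statement, and makes the proof self-contained. Both routes are equivalent here; yours is more elementary, the paper's more conceptual.
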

\begin{proof}[Proof of Lemma \ref{Le:Convergence}]
The sequence $(\lambda_\alpha(m_\alpha))_{\alpha>0}$ is bounded from above. Indeed, choosing any test function $\psi\in W^{1,2}_0(\O)$ such that $\int_\O \psi^2=1$, it follows from \eqref{Eq:Rayleigh} that $\lambda_\alpha(m_\alpha)\leq (1+\alpha \kappa)\Vert \n \psi\Vert_2^2+\kappa \Vert \psi\Vert_2^2.$
Similarly, using once again \eqref{Eq:Rayleigh}, we get that if $\xi_\alpha$ is the first eigenvalue associated to the operator $-(1+\alpha \kappa)\Delta-\kappa $, then $\lambda_\alpha(m_\alpha)\geq \xi_\alpha$. 
Since $(\xi_\alpha)_{\alpha>0}$ converges to the first eigenvalue of $-\Delta -\kappa$ as $\alpha \to0$, $(\xi_\alpha)_{\alpha>0}$ is bounded from below whenever $\alpha$ is small enough. Combining these facts yields that the sequence $(\lambda_\alpha(m_\alpha))_{\alpha> 0}$ is bounded by some positive constant $M$ and converges, up to a subfamily, to $\tilde \lambda$.
For any $\alpha>0$, let us denote by $u_\alpha$ the associated $L^2$-normalized eigenfunction associated to  $\lambda_\alpha(m_\alpha)$. From the weak formulation of  equation \eqref{Eq:EigenFunction} and the normalization condition $\int_\O u_\alpha^2=1$, we infer that
\begin{align*}
\Vert \n u_\alpha\Vert _2^2&=\int_\O |\n u_\alpha|^2\leq \int_\O (1+\alpha m)|\n u_\alpha|^2=\int_\O m_\alpha u_\alpha^2+\lambda_\alpha(m_\alpha)\int_\O u_\alpha^2 \leq (M+\kappa).
\end{align*}
According to the Poincar\'e inequality and the Rellich-Kondrachov Theorem, the sequence $(u_\alpha)_{\alpha >0}$ is uniformly bounded in $W^{1,2}_0(\O)$ and converges, up to subfamily, to $\tilde u\in W^{1,2}_0(\O)$ weakly in $W^{1,2}_0(\O)$ and strongly in $L^2(\O)$, and moreover $\tilde u$ is also normalized in $L^2(\O)$.

 Furthermore, since $L^2$ convergence implies pointwise convergence (up to a subfamily), $\tilde u$ is necessarily nonnegative in $\O$. Let $\tilde m$ be a closure point of $(m_\alpha)_{\alpha >0}$ for the weak-star topology of $L^\infty$. Passing to the weak limit in the weak formulation of the equation solved by $u_\alpha$, namely Eq.~\eqref{Eq:EigenFunction}, one gets
$$
-\Delta \tilde u-\tilde m\tilde u=\tilde \lambda \tilde u\quad \text{in }\O.
$$
Since $\tilde u\geq 0$ and $\int_{\B(0,R)}\tilde u^2=1$, it follows that $\tilde u$ is the principal eigenfunction of $-\Delta -\tilde m$, so that $\tilde \lambda=\lambda_0(m^*_0)$.

Mimicking this reasoning enables us to show in a similar way that, up to a subfamily, $(\lambda_\alpha(m^*_0))_{\alpha > 0}$ converges to $\lambda_0(m^*_0)$ and $(u_{\alpha,m^*_0})_{\alpha > 0}$ converges to $u_{0,m^*_0}$ as $\alpha \to 0$.  Passing to the limit in the inequality \eqref{eq:lem:lambalphaineg} and since $m^*_0$ is the only minimizer of $\lambda_0$ in $\mathcal M_{m_0,\kappa}(\O)$ according to the Faber-Krahn inequality, we infer that necessarily, $\tilde m=m^*_0$.  Moreover, $m^*_0$ being an extreme point of $\mathcal M_{m_0,\kappa}(\O)$, the subfamily $(m_\alpha)_{\alpha > 0}$ converges to $\tilde m=m^*_0$ (see \cite[Proposition 2.2.1]{HenrotPierre}), strongly in $L^1(\O)$.
\end{proof}

A straightforward adaptation of the proof of Lemma \ref{Le:Convergence} yields that both sets $\{\lambda_\alpha(m)\}_{m\in \mathcal M_{m_0,\kappa}(\O)}$ and $\{\Vert u_{\alpha,m}\Vert_{W^{1,2}(\O)}\}_{m\in \mathcal M_{m_0,\kappa}(\O)}$ are uniformly bounded whenever $\alpha \leq 1$.
Let us hence introduce $M_0>0$ such that 
\begin{equation}\label{metz1946}
\forall \alpha\in [0,1], \quad \max \{|\lambda_\alpha(m)|, \Vert \uam\Vert _{W^{1,2}_0(\O)}\}\leq M_0.
\end{equation}

The next result is the only ingredient of the proof of Theorem~\ref{Th:RadialStability} where the low dimension assumption on $n$ is needed.

\begin{lemma}\label{Le:Bounds}
Let us assume that $n=1,2,3$. There exists $M_1>0$ such that, for every radially symmetric distribution $m\in \mathcal M_{m_0,\kappa}(\O)$ and every $\alpha\in  [0,1]$, there holds
$$
\Vert u_{\alpha,m}\Vert _{W^{1,\infty}(\B)}\leq M_1.
$$
Furthermore,  define $\tsm$, $\tm$ and  $\pam:(0,R)\to \R$ by 
$$\forall x \in \B,\quad  \uam(x)=\pam\left(|x|\right),\quad \sigma_{\alpha,m}(x)=\tsm(|x|),\quad  m(x)=\tm(|x|),$$
then $\tsm(\pam)'$ belongs to $W^{1,\infty}(0,R)$. 
\end{lemma}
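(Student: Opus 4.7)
The plan is to proceed in three steps: first obtain a uniform $L^\infty$ bound on $\uam$, then use radial symmetry to turn the eigenvalue equation into an ODE whose integration yields the $W^{1,\infty}$ bound on $\pam$, and finally use that same ODE to control $(\tsm\pam')'$.

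\textbf{Step 1 (uniform $L^\infty$ bound on $\uam$).} I would first establish that there exists $M>0$, independent of $\alpha\in[0,1]$ and $m\in\mathcal M_{m_0,\kappa}(\O)$, such that $\Vert \uam\Vert_{L^\infty(\O)}\leq M$. The eigenfunction satisfies
$$-\n\cdot(\sigma_{\alpha,m}\n \uam)=(m+\lambda_\alpha(m))\uam\quad\text{in }\O,$$
with uniformly elliptic coefficients $1\leq \sigma_{\alpha,m}\leq 1+\kappa$. By \eqref{metz1946}, both $\Vert\uam\Vert_{W^{1,2}_0(\O)}$ and $|\lambda_\alpha(m)|$ are bounded by $M_0$. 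Since $n\in\{1,2,3\}$, the Sobolev embedding places $\uam$ in some $L^p(\O)$ large enough ($p=\infty$, any $p<\infty$ or $p=6$, respectively) that the right-hand side $(m+\lambda_\alpha(m))\uam$ belongs to $L^q(\O)$ with $q>n/2$. A standard De Giorgi--Stampacchia $L^\infty$ estimate for divergence-form elliptic equations with bounded measurable coefficients then gives the desired uniform bound. The hypothesis $n\leq 3$ enters precisely at this step, to allow a single application of Stampacchia's theorem rather than the full Moser iteration.

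\textbf{Step 2 ($W^{1,\infty}$ bound on $\pam$).} Now restricting to radial $m$, the eigenvalue equation \eqref{Eq:EigenFunction} in spherical coordinates reads
$$\bigl(r^{n-1}\tsm(r)\pam'(r)\bigr)'=-r^{n-1}\bigl(\tm(r)+\lambda_\alpha(m)\bigr)\pam(r)\quad\text{on }(0,R),$$
with $\pam'(0)=0$ by symmetry and $\pam(R)=0$. Integrating from $0$ to $r$ yields
\begin{equation}\label{eq:plan:intrep}
\tsm(r)\pam'(r)=-\frac{1}{r^{n-1}}\int_0^r s^{n-1}\bigl(\tm(s)+\lambda_\alpha(m)\bigr)\pam(s)\,ds.
\end{equation}
Using Step 1 together with $\Vert\tm\Vert_\infty\leq\kappa$ and $|\lambda_\alpha(m)|\leq M_0$, the integrand is bounded by a constant $C$ independent of $\alpha$ and $m$, so that $|\tsm(r)\pam'(r)|\leq C r/n$. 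Since $\tsm\geq 1$, this gives $|\pam'(r)|\leq Cr/n$, and combined with the uniform $L^\infty$ bound on $\pam$ from Step 1 it yields the announced $W^{1,\infty}(0,R)$ estimate.

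\textbf{Step 3 ($W^{1,\infty}$ bound on $\tsm\pam'$).} Differentiating \eqref{eq:plan:intrep} (or equivalently rearranging the radial ODE) gives
$$\bigl(\tsm\pam'\bigr)'(r)=-\frac{n-1}{r}\tsm(r)\pam'(r)-\bigl(\tm(r)+\lambda_\alpha(m)\bigr)\pam(r).$$
The second term is bounded thanks to Step 1. The apparent singularity of the first term at $r=0$ is cancelled by the pointwise bound $|\tsm(r)\pam'(r)|\leq Cr/n$ obtained in Step 2, which yields $|\tsm(r)\pam'(r)|/r\leq C/n$ uniformly. Therefore $(\tsm\pam')'\in L^\infty(0,R)$ with a bound depending only on $\kappa,M_0$, and $M$, and $\tsm\pam'\in W^{1,\infty}(0,R)$ as claimed.

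The main obstacle is Step 1: the ODE analysis of Steps 2 and 3 is elementary once a uniform $L^\infty$ bound on $\uam$ is in hand. The role of the restriction $n\leq 3$ is exactly to keep the elliptic regularity argument short; pushing the result to higher dimensions should be possible via Moser iteration but requires more work and is unnecessary here since Step 1 is only used as a technical ingredient of Theorem~\ref{Th:RadialStability}.
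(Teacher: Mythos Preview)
Your proof is correct. Steps~2 and~3 coincide with the paper's argument (integrate the radial ODE, then read off $(\tsm\pam')'$). The difference lies entirely in Step~1.

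The paper does \emph{not} invoke $n$-dimensional elliptic regularity for the $L^\infty$ bound; it works purely with the one-dimensional profile. It first applies Hardy's inequality on $(0,R)$ to get $\int_0^R\pam^2\leq 4\int_0^R r^2(\pam')^2$, and then uses the pointwise inequality $r^2\leq R^{3-n}r^{n-1}$ (valid exactly when $n\leq 3$) to control this by $\Vert\nabla\uam\Vert_{L^2(\B)}^2\leq M_0^2$. From this $L^2(0,R)$ bound on $\pam$ the paper bootstraps: the integrated ODE together with Cauchy--Schwarz gives $\pam'\in L^2(0,R)$, and the one-dimensional Sobolev embedding $W^{1,2}_0(0,R)\hookrightarrow L^\infty$ then yields $\pam\in L^\infty$. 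Only after that does the paper plug the $L^\infty$ bound back into the integrated ODE to get $\pam'\in L^\infty$, as you do.

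Your route via De Giorgi--Stampacchia is shorter, treats the $L^\infty$ bound as a black box, and does not use radial symmetry at that stage. Incidentally, your argument is actually stronger than you claim: a single application of Stampacchia works whenever $2^*=2n/(n-2)>n/2$, i.e.\ for $n\leq 5$, so the restriction $n\leq 3$ is not tight in your approach. The paper's route, by contrast, is more elementary (no De Giorgi--Nash--Moser theory) but genuinely exploits the radial structure and genuinely requires $n\leq 3$ through the Hardy step.
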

\begin{proof}[Proof of Lemma \ref{Le:Bounds}]
This proof is inspired by \cite[Proof of Theorem 3.3]{Laurain}. It is standard that for every $\alpha\in [0,1]$ and every radially symmetric distribution $m\in \mathcal M_{m_0,\kappa}(\O)$, the eigenfunction $u_{\alpha,m}$ is itself radially symmetric. 
By rewriting the equation \eqref{Eq:EigenFunction} on $ \uam$ in {spherical} coordinates, on sees that $\pam$ solves
\begin{equation}\label{Eq:EFRadial}
\left\{
\begin{array}{ll}
-\frac{d}{dr}\left(r^{n-1}\tsm \frac{d}{dr}\pam\right)=\left(\lambda_\alpha(m)\pam+\tm \pam\right)r^{n-1} & \text{in }(0,R)\\
\pam(R)=0. & 
\end{array}
\right.
\end{equation}
By applying the Hardy Inequality\footnote{This inequality reads (see e.g.  \cite[Lemma 1.3]{AloisKufner} or \cite{Hardy}): for any non-negative $f$,
$$
\int_0^\infty f(x)^2dx\leq 4\int_0^\infty x^2f'(x)^2dx.
$$}
to $f=\pam$, we get 
{\begin{eqnarray*}
\int_0^R \p^{2}_{\alpha,m}(r)\, dr & \leq & 4\int_0^R r^2\p'_{\alpha,m}(r)^2\, dr \\
& \leq & 4R^2\int_0^R \left(\frac{r}{R^{2}}\right)^{n-1}\p'_{\alpha,m}(r)^2dr=4R^{4-2n}\Vert\n \uam\Vert_{L^2(\B)}^2\leq M,
\end{eqnarray*}}
since $n\in \{1,2,3\}$. Hence, there exists $C>0$ such that 
\begin{equation}\label{Borne1D}
\Vert \pam\Vert _{L^2(0,R)}^2\leq C.
\end{equation}

We will successively prove that $\pam$ is uniformly bounded in $W^{1,2}_0(0,R)$, then in $L^\infty(0,R)$ to infer that  $\p'_{\alpha,m}$ is bounded in $L^\infty(0,R)$. This proves in particular that $\sigma_{\alpha,m}\p_{\alpha,m}'\in L^{\infty}(0,R)$.
We will then conclude that $\sigma_{\alpha,m}\p_{\alpha,m}'\in W^{1,\infty}(0,R)$ by using  that it is a continuous function whose derivative is uniformly bounded in $L^\infty$ by the equation on $\pam$.\

According to \eqref{metz1946}, {one sees that} $\Vert r^{\frac{n-1}2}\p'_{\alpha,m}\Vert _{L^2(0,R)}=\Vert\n \uam\Vert_{L^2(\B)}$ is bounded and therefore, $r^{n-1}\p'_{\alpha,m}(r)$ converges  to 0 as $r\to 0$. {If $n=1$, such a convergence holds since $\p'_{\alpha,m}(0)=0$ by radial symmetry}. Hence, integrating Eq. \eqref{Eq:EFRadial} between $0$ and $r>0$ yields
$$
\tsm(r)\p'_{\alpha,m}(r)=-\frac1{r^{n-1}}\int_0^r t^{n-1}\left(\lambda_\alpha(m)\pam(t)+\tm(t) \pam(t)\right) dt.
$$
By using the Cauchy-Schwarz inequality and  \eqref{Borne1D}, we get the existence of $\tilde M>0$ such that
\begin{eqnarray*}
\Vert \p'_{\alpha,m}\Vert _{L^2(0,R)}^2 &\leq &\int_0^1 \left(\tsm \p'_{\alpha,m}\right)^2(r)\, dr\\
&=&\int_0^1\frac1{r^{2(n-1)}}\left(\int_0^r t^{n-1}\left(\lambda_\alpha(m)\pam(t)+\tm(t) \pam(t)\right) dt\right)^2\, dr\\
&\leq &\int_0^1\frac1{r^{2(n-1)}}(\lambda_\alpha(m)+\kappa)^2\Vert \pam\Vert _{L^2(0,R)}^2\Vert {t\mapsto  t^{n-1}}\Vert _{L^2(0,r)}^2\, dr\\
&\leq &\frac{\tilde M}{4n-2}\Vert \pam\Vert _{L^2(0,R)}^2\leq \tilde M\Vert \pam\Vert _{L^2(0,R)}^2\leq \tilde M C,
\end{eqnarray*}
Hence, $\pam$ is uniformly bounded in $W^{1,2}_0(0,R)$.

It follows from standard Sobolev embedding's theorems that there exists a constant $M_2>0$, such that 
$\Vert \pam\Vert _{L^\infty(0,R)}\leq M_2$. 

Finally, plugging this estimate in the equality 
$$
\tsm(r)\p'_{\alpha,m}(r)=-\frac1{r^{n-1}}\int_0^r t^{n-1}\left(\lambda_\alpha(m)\pam(t)+\tm(t) \pam(t)\right) dt
$$
and since $t^{n-1}\leq r^{n-1}$ on $(0,r)$, we get that $\p'_{\alpha,m}$ is uniformly bounded in $L^\infty(0,R)$.

\end{proof}

The next lemma is a direct corollary of Lemma \ref{Le:Convergence}, Lemma \ref{Le:Bounds} and the Arzela-Ascoli Theorem.
\begin{lemma}\label{Le:ConvergenceFP}
Let $(m_\alpha)_{\alpha>0}$ be a sequence of radially symmetric functions of $\mathcal M_{m_0,\kappa}(\O)$ such that, for every $\alpha\in [0,1]$, $\lambda_{\alpha}(m_\alpha)\leq \lambda_{\alpha}(m^*_0)$. Then, up to a subfamily,
$u_{\alpha,m_\alpha}$ converges to $u_{0,m^*_0}$ for the strong topology of $\mathscr C^0(\overline\O)$ as $\alpha\to 0$.
\end{lemma}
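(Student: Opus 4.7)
The plan is to combine the uniform $W^{1,\infty}$ bound from Lemma \ref{Le:Bounds} with Arzel\`a--Ascoli to extract a uniformly convergent subsequence, and then identify the limit as $u_{0,m^*_0}$ using the convergence of the densities from Lemma \ref{Le:Convergence}.

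More precisely, the starting observation is that since every $m_\alpha$ is radially symmetric, so is the associated eigenfunction $u_{\alpha,m_\alpha}$, and Lemma \ref{Le:Bounds} applies to give a constant $M_1>0$, independent of $\alpha\in[0,1]$, such that
\[
\Vert u_{\alpha,m_\alpha}\Vert_{W^{1,\infty}(\B)}\leq M_1.
\]
In particular, the family $\{u_{\alpha,m_\alpha}\}_{\alpha>0}$ is uniformly bounded and uniformly Lipschitz, hence equicontinuous on $\overline\O$. The Arzel\`a--Ascoli theorem then yields the existence of a subfamily (still denoted $\{u_{\alpha,m_\alpha}\}$) and of a function $u^*\in\mathscr C^0(\overline\O)$ such that $u_{\alpha,m_\alpha}\to u^*$ uniformly on $\overline\O$ as $\alpha\to 0$.

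It remains to identify $u^*=u_{0,m^*_0}$. First, Lemma \ref{Le:Convergence} gives $m_\alpha\to m^*_0$ strongly in $L^1(\O)$ (in fact along any convergent subfamily), and the argument in its proof also shows, up to extraction, $\lambda_\alpha(m_\alpha)\to \lambda_0(m^*_0)$. On the other hand, the uniform bound \eqref{metz1946} shows that $u_{\alpha,m_\alpha}$ is bounded in $W^{1,2}_0(\O)$, so a further extraction gives weak $W^{1,2}_0$ convergence to some $\bar u$. Combined with the uniform convergence just obtained, we get $\bar u=u^*\in W^{1,2}_0(\O)$ and $\int_\O (u^*)^2=1$, with $u^*\geq 0$. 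The term $\alpha m_\alpha \nabla u_{\alpha,m_\alpha}$ tends to $0$ in $L^2(\O)$ since $\alpha\to 0$ and $\nabla u_{\alpha,m_\alpha}$ is $L^\infty$-bounded, while $m_\alpha u_{\alpha,m_\alpha}\to m^*_0 u^*$ strongly in $L^1(\O)$ by the $L^1$ convergence of $m_\alpha$ and the uniform convergence of $u_{\alpha,m_\alpha}$. Passing to the limit in the weak formulation of \eqref{Eq:EigenFunction} yields
\[
-\Delta u^*-m^*_0 u^*=\lambda_0(m^*_0)u^*\quad\text{in }\O,\qquad u^*=0 \text{ on }\partial\O.
\]
Since $u^*\geq 0$, $\int_\O (u^*)^2=1$ and $\lambda_0(m^*_0)$ is the principal (simple) eigenvalue of the associated operator, Krein--Rutman forces $u^*=u_{0,m^*_0}$. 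The limit being uniquely determined, the whole subfamily (not just a further extraction) converges uniformly on $\overline\O$ to $u_{0,m^*_0}$, which is the desired conclusion. No step is really an obstacle here: the only non-trivial input, the $W^{1,\infty}$ bound, is already provided by Lemma \ref{Le:Bounds}, and the rest is a routine compactness-and-identification argument.
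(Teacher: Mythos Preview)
Your proof is correct and follows precisely the route the paper indicates: the paper itself merely states that the lemma is a direct corollary of Lemma~\ref{Le:Convergence}, Lemma~\ref{Le:Bounds} and the Arzel\`a--Ascoli theorem, and you have filled in exactly these steps. Your identification of the limit via passage to the limit in the weak formulation is the standard way to make the ``direct corollary'' explicit.
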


\subsection{Step 2: reduction to particular resource distributions close to $m_0^*$}
Let us consider a sequence of radially symmetric distributions $(m_\alpha)_{\alpha > 0}$ such that, for every $\alpha\in [0,1]$, $\lambda_\alpha(m_\alpha)\leq \lambda_\alpha(m^*_0)$.
According to Proposition~\ref{Th:BangBang}, we can assume that each $m_\alpha$ is a {\it bang-bang}, in other words that $m_\alpha =\kappa \mathds{1}_{E_\alpha}$ where $E_\alpha$ is a measurable subset of $\B(0,R)$. For every $\alpha \in [0,1]$, one introduces $d_\alpha=d_H(E_\alpha,E^*_0)$, the Hausdorff distance of $E_\alpha$ to $E^*_0$.
\begin{lemma}\label{Le:Hausdorff}
For every $\e>0$ small enough, there exists $\overline \alpha>0$ such that, for every $\alpha\in [0, \overline \alpha]$, there exists a {radially symmetric } measurable subset $\tilde{E}_\alpha$ of $\O$ such that
$$
\lambda_\alpha(\kappa\mathds{1}_{E_\alpha})\geq \lambda_\alpha(\kappa\mathds{1}_{\tilde{E}_\alpha}), \quad |E_\alpha|=|\tilde{E}_\alpha|\quad \text{and}\quad d_H(\tilde{E}_\alpha,E^*_0) \leq \e.
$$
\end{lemma}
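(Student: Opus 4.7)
My plan is to construct $\tilde E_\alpha$ as a radial ``inward rearrangement'' of $E_\alpha$ that (i) preserves the volume, (ii) is non-increasing for $\lambda_\alpha$, and (iii) is contained in $\mathbb B(0,r_0^*+\e)$ and $\e$-dense in $E_0^*$, which jointly yield $d_H(\tilde E_\alpha,E_0^*)\le \e$. The key observation is that, for $\alpha$ small, the switching function $\psi_{\alpha,m}=\alpha|\n u_{\alpha,m}|^2-u_{\alpha,m}^2$ is nearly equal to $-u_{0,m_0^*}^2$, which is strictly radially increasing; this provides the natural ``direction of descent'' for $\lambda_\alpha$ via Lemma~\ref{Le:DeriveeL21}.

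First, combining Lemmas~\ref{Le:Convergence}, \ref{Le:Bounds} and \ref{Le:ConvergenceFP}, as $\alpha\to 0$ we have $m_\alpha\to m_0^*$ in $L^1(\O)$, $u_{\alpha,m_\alpha}\to u_{0,m_0^*}$ uniformly, and $\alpha|\n u_{\alpha,m_\alpha}|^2\to 0$ uniformly (this last point uses the $W^{1,\infty}$ bound from Lemma~\ref{Le:Bounds}); therefore $\psi_{\alpha,m_\alpha}\to -u_{0,m_0^*}^2$ uniformly. Since $u_{0,m_0^*}$ is strictly radially decreasing on $[0,R]$, it follows that for $\alpha$ small enough (depending on $\e$), the radial profile of $\psi_{\alpha,m_\alpha}$ satisfies
\[
\sup_{r\in[0,r_0^*-\e]}\psi_{\alpha,m_\alpha}(r)\ <\ \inf_{r\in[r_0^*+\e,R]}\psi_{\alpha,m_\alpha}(r),
\]
with a gap bounded below uniformly in $\alpha$. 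An analogous argument extends this to arbitrary radial bang-bang $m$ in a fixed $L^1$-neighborhood of $m_0^*$.

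Second, letting $F_\alpha\subset(0,R)$ denote the radial trace of $E_\alpha$ and $F_\alpha^{\mathrm{out}}=F_\alpha\cap(r_0^*+\e,R]$ the outer excess, I construct a continuous family $\{E_\alpha(s)\}_{s\in[0,1]}$ of radially symmetric bang-bang sets with $|E_\alpha(s)|=|E_\alpha|$ and $E_\alpha(0)=E_\alpha$, by progressively removing an outermost $r^{n-1}$-weighted portion of $F_\alpha^{\mathrm{out}}$ and simultaneously adding the same weighted amount into the innermost available gap of $(0,r_0^*+\e)\setminus F_\alpha(s)$. At $s=1$, the outer excess has been exhausted, so $E_\alpha(1)\subset\mathbb B(0,r_0^*+\e)$; and since the (small) redistributed mass is always placed innermost, $E_\alpha(1)$ leaves no radial gap larger than $2\e$ inside $E_0^*$, hence is $\e$-dense in $E_0^*$. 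Set $\tilde E_\alpha:=E_\alpha(1)$.

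Monotonicity of $\lambda_\alpha$ along the path then follows from Lemma~\ref{Le:DeriveeL21}: the one-sided derivative of $s\mapsto\lambda_\alpha(\kappa\mathds 1_{E_\alpha(s)})$ is $\kappa\int_\O h_s\,\psi_{\alpha,\kappa\mathds 1_{E_\alpha(s)}}$ for an admissible direction $h_s$ that is negative on the outermost piece being removed and positive on the innermost piece being added. By the monotonicity from the first step (which applies to every $E_\alpha(s)$ because the total $L^1$-variation along the path is controlled by $\|m_\alpha-m_0^*\|_{L^1}$, itself small by Lemma~\ref{Le:Convergence}), this integral is $\le 0$, so $\lambda_\alpha(\kappa\mathds 1_{\tilde E_\alpha})\le \lambda_\alpha(m_\alpha)$, concluding the construction. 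The main obstacle is precisely this last step: one must establish uniform continuity of $m\mapsto\psi_{\alpha,m}$ in $\mathscr{C}^0$ with respect to small $L^1$-perturbations of $m$, so that the strict radial monotonicity of the switching function is preserved throughout the path. This rests on continuous dependence of the eigenfunction on the density together with the uniform $W^{1,\infty}$ estimates of Lemma~\ref{Le:Bounds}, and is the technical heart of the argument.
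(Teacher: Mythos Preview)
Your path-monotonicity argument has a genuine gap. Lemma~\ref{Le:DeriveeL21} gives the G\^ateaux derivative of $\lambda_\alpha$ at $m$ in a direction $h\in L^\infty(\O)$ lying in the tangent cone $\mathcal T_m$; it does \emph{not} give the derivative of $s\mapsto\lambda_\alpha(\kappa\mathds 1_{E_\alpha(s)})$ along a family of characteristic functions. The ``velocity'' of such a path is a measure supported on the moving interface, not an element of $L^\infty$, so the formula $\kappa\int_\O h_s\,\psi_{\alpha,\kappa\mathds 1_{E_\alpha(s)}}$ is not justified by that lemma. You would have to recast everything as shape derivatives (with the attendant regularity assumptions on the moving boundaries), or else work along a non--bang-bang path, in which case your monotonicity-of-$\psi$ argument no longer applies as stated. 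On top of this, your claim that $E_\alpha(1)$ is $\e$-dense in $E_0^*$ is not established: the mass you move in from $F_\alpha^{\mathrm{out}}$ equals the outer excess \emph{beyond} $r_0^*+\e$, which need not suffice to fill a radial gap near the origin (where small $r^{n-1}$-weight is compatible with small $L^1$-distance but large Hausdorff distance).

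More importantly, the continuous path and the ``uniform continuity of $m\mapsto\psi_{\alpha,m}$'' that you flag as the technical heart are entirely unnecessary. The paper proceeds in \emph{one step}, using only the switching function at the fixed $m_\alpha$. Write
\[
\lambda_\alpha(m_\alpha)=\int_\B|\n u_{\alpha,m_\alpha}|^2+\int_\B m_\alpha\,\psi_{\alpha,m_\alpha},
\]
choose a threshold $\mu_\alpha^*$ so that a set $B_\e^\alpha$ with $\{\psi_{\alpha,m_\alpha}<\mu_\alpha^*\}\subset B_\e^\alpha\subset\{\psi_{\alpha,m_\alpha}\le\mu_\alpha^*\}$ has the right volume, and set $\tilde m_\alpha=\kappa\mathds 1_{B_\e^\alpha}$. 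A bathtub argument (using $|E_\alpha\cap(B_\e^\alpha)^c|=|(E_\alpha)^c\cap B_\e^\alpha|$) gives $\int_\B m_\alpha\psi_{\alpha,m_\alpha}\ge\int_\B\tilde m_\alpha\psi_{\alpha,m_\alpha}$, hence
\[
\lambda_\alpha(m_\alpha)\ge\int_\B(1+\alpha\tilde m_\alpha)|\n u_{\alpha,m_\alpha}|^2-\int_\B\tilde m_\alpha u_{\alpha,m_\alpha}^2\ge\lambda_\alpha(\tilde m_\alpha)
\]
by the Rayleigh characterisation~\eqref{Eq:Rayleigh} with test function $u_{\alpha,m_\alpha}$. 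The Hausdorff bound is immediate: since $\psi_{\alpha,m_\alpha}\to-u_{0,m_0^*}^2$ in $L^\infty$ and the limit is strictly radially increasing, for $\alpha$ small one has $\mathbb B(0,r_0^*-\e)\subset\{\psi_{\alpha,m_\alpha}<\mu_\alpha^*\}$ and $\{\psi_{\alpha,m_\alpha}\le\mu_\alpha^*\}\subset\mathbb B(0,r_0^*+\e)$, so $d_H(B_\e^\alpha,E_0^*)\le\e$. No path, no dependence of $\psi$ on $m$ beyond the single point $m_\alpha$, and the (dis)continuity of $\psi_{\alpha,m_\alpha}$ causes no trouble because only its sublevel sets are used.
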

\begin{proof}[Proof of Lemma \ref{Le:Hausdorff}]
Let $\alpha\in [0,1]$. Observe first that
$\lambda_\alpha(m)=\int_\B|\n \uam|^2+\alpha \int_\B m|\n \uam|^2-\int_\B mu_{\alpha,m}^2=\int_\B |\n \uam|^2+\int_\B m\psi_{\alpha,m}$, where $\psi_{\alpha,m}$ has been introduced in Lemma~\ref{Le:DeriveeL21}.
We will first  construct $ \tilde m_\alpha$ in such a way that 
$$
\lambda_\alpha(m_\alpha)\geq \int_\B |\n u_{\alpha,m_\alpha}|^2+\int_\O \psi_{\alpha,m_\alpha} \tilde m_\alpha\geq \lambda_\alpha(\tilde m_\alpha),
$$ 
and, to this aim, we will define $\tilde m_\alpha$ as a suitable level set of $\psi_{\alpha,m_\alpha}$. Thus, we will evaluate the Hausdorff distance of these level sets to $E^*_0$. The main difficulty here rests upon the lack of regularity of the switching function $\psi_{\alpha,m_\alpha}$, which is { not even continuous (see Figure (\ref{fig:phi})).}

According to Lemmas~\ref{Le:Bounds} and \ref{Le:ConvergenceFP}, $\psi_{\alpha,m_\alpha}$ converges to $-u_{0,m^*_0}^2$ for the strong topology of $L^\infty(\B)$. 
Recall that $m^*_0=\kappa \mathds{1}_{\mathbb B(0,r^*_0)}$ and let $V_0$ be defined by $V_0=|\mathbb B(0,r^*_0)|$.
Let us define $\mu_\alpha^*$ by dichotomy, as the only real number such that
$$
|\underline \omega_\alpha|\leq V_0\leq |\overline \omega_\alpha|,
$$
 where $\underline \omega_\alpha=\{\psi_{\alpha,m_\alpha}<\mu_\alpha^*\}$ and $\overline \omega_\alpha=\{\psi_{\alpha,m_\alpha}\leq \mu_\alpha^*\}$.

Since $\left|\{\psi_{0,m^*_0}<-\varphi_{0,m_0^*}^2(r_0^*)\}\right|=V_0$, we deduce that $(\mu_\alpha^*)$ converges to $-\varphi_{0,m^*_0}^2(r^*_0)$ as $\alpha\to 0$. 
Since $\varphi_{0,m^*_0}$ is decreasing, we infer that for any $\e>0$ small enough, there exists $\overline \alpha>0$ such that: for every $\alpha\in [0, \overline \alpha]$, $\mathbb B(0,r^*_0-\e)\subset \underline \omega_\alpha\subset \overline \omega_\alpha\subset \mathbb B(0,r^*_0+\e)$.
Therefore, there exists a radially symmetric set $B_\e^\alpha$ such that
$$
\underline \omega_\alpha \subset B_\e^\alpha\subset \overline \omega_\alpha,\quad  |B_\e^\alpha|=V_0 ,\quad  d_H(B_\e^\alpha,E^*_0)\leq \e.
$$ 
Since $E_\alpha$ and $B_\e^\alpha$ have the same measure, one has $|(E_\alpha)^c\cap B_\e^\alpha|=|E_\alpha\cap (B_\e^\alpha)^c|$, we introduce $\tilde m_\alpha=\kappa \mathds{1}_{B_\e^\alpha}$ so that $\tilde m_\alpha$ belongs to $\mathcal M_{m_0,\kappa}(\O)$.

\begin{figure}[H]\label{fig:phi}
\begin{center}
\includegraphics[width=5.2cm]{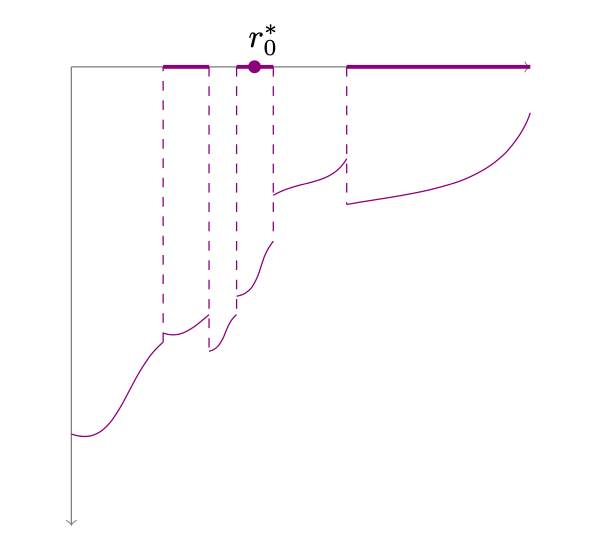}
\caption{Possible graph of the discontinuous function $\psi_{\alpha,m_\alpha}$. The bold intervals on the $x$ axis correspond to $\{m_\alpha=0\}$. 
}
\end{center}
\end{figure}

%
%
%
%
%

By construction, one has 
\begin{align*}
\lambda_\alpha(m_\alpha)&=\int_\B (1+\alpha m_\alpha)|\n u_{\alpha,m_\alpha}|^2-\int_\B m_\alpha u_{\alpha,m}^2
=\int_\B |\n u_{\alpha,m_\alpha}|^2+\int_\B \psi_{\alpha,m_\alpha}m_\alpha
\\&=\int_\B |\n u_{\alpha,m_\alpha}|^2+\kappa \int_{E_\alpha}\psi_{\alpha,m_\alpha}
=\int_\B |\n u_{\alpha,m_\alpha}|^2+\kappa \int_{E_\alpha\cap (B_\e^\alpha)^c}\psi_{\alpha,m_\alpha}+\kappa \int_{E_\alpha\cap B_\e^\alpha}\psi_{\alpha,m_\alpha}
\\&\geq \int_\B |\n u_{\alpha,m_\alpha}|^2+\kappa \mu_\alpha^* |E_\alpha \cap( B_\e^\alpha)^c|+\kappa \int_{E_\alpha\cap B_\e^\alpha}\psi_{\alpha,m_\alpha}
\\&=\int_\B |\n u_{\alpha,m_\alpha}|^2+\kappa \mu_\alpha^* |(E_\alpha)^c \cap B_\e^\alpha|+\kappa \int_{E_\alpha\cap B_\e^\alpha}\psi_{\alpha,m_\alpha}
\\&\geq \int_\B |\n u_{\alpha,m_\alpha}|^2+\kappa \int_{(E_\alpha)^c\cap B_\e^\alpha}\psi_{\alpha,m_\alpha}+\kappa \int_{E_\alpha\cap B_\e^\alpha}\psi_{\alpha,m_\alpha}
=\int_\B |\n u_{\alpha,m_\alpha}|^2+\int_\B \tilde m_\alpha \psi_{\alpha,m_\alpha}
\\&=\int_\B\sigma_{\alpha,\tilde m_\alpha} |\n u_{\alpha,m_\alpha}|^2-\int_\B \tilde m_\alpha u_{\alpha,m}^2
\geq \lambda_\alpha(\tilde m_\alpha),
\end{align*}
the last inequality coming from the variational formulation \eqref{Eq:Rayleigh}.
The expected conclusion thus follows  by taking $\tilde E_\alpha:=B_\e^\alpha$.
\end{proof}

From now on we will replace $m_\alpha$ by $\kappa\mathds{1}_{\tilde{E}_\alpha}$ and still denote this function by $m_\alpha$ with a slight abuse of notation.

\subsection{Step 3: conclusion, by the mean value theorem}
Recall that, according to Section~\ref{sec:stepproofstationarity}, for every $\alpha\in [0,1]$, the mapping $f_\alpha$ is defined by $f_\alpha(t):=\zeta_\alpha(m_t,\Lambda_-(m_t))$ for all $t\in [0,1]$
We claim that $f_\alpha$ belongs to $\mathscr C^1$. This follows from similar arguments to those of the $L^2$ differentiability of $m\mapsto \lambda_\alpha(m)$ in Appendix \ref{Ap:Differentiability}. Following the proof of \eqref{Eq:OptimalityH}, it is also straightforward that for every $t\in [0,1]$, one has
\begin{equation}\label{Eq:DeriveeFAlpha}
f_\alpha'(t)=\int_\B \left(\frac\alpha{1+\alpha\kappa} \Lambda_-(m_t)^2 |\n u_t|^2-u_t^2\right)h_\alpha.\end{equation} 
Finally, since $m^*_0$ and $m_\alpha$ are {\it bang-bang}, it follows from Definition~\ref{De:Homo} that 
$f_\alpha(0)=\lambda_\alpha(m^*_0)$ and $f_\alpha(1)=\lambda_\alpha(m_\alpha)$.

Since $m_\alpha$ is assumed to be radially symmetric, so is $m_t$ for every $t\in [0,1]$ thanks to a standard reasoning, and, therefore, so is  $u_t$. With a slight abuse of notation, we identify $m_t$, $u_t$ and $\Lambda_-(m_t)$ with their radially symmetric part $\tilde m_t$, $\tilde u_t$, $\tilde \Lambda_-(m_t)$ defined on $[0,R]$ by 
$$
u_t(x)=\tilde u_t(|x|), \quad m_t(x)=\tilde m_t(|x|),\quad  \Lambda_-(m_t)(x)=\tilde \Lambda_-(\tilde m_t)(|x|).
$$
Then the function $u_t$ (defined on $[0,R]$) solves the equation
\begin{equation}\label{Eq:UtRadial}
\left\{\begin{array}{ll}-\frac{d}{dr}\left(r^{n-1}\Lambda_-(m_t)\frac{du_t}{dr}\right)=\left(\zetat u_t+m_t u_t\right)r^{n-1}& r\in [0,R]\\ 
u_t(R)=0 &\\
\int_0^R r^{n-1}u_t(r)^2dr=\frac1{c_n},&
\end{array}\right.
\end{equation} 
where $c_n=|\mathbb S(0,1)|$. As a consequence, an immediate adaptation of the proof of Lemma~\ref{Le:Bounds} yields:
\begin{lemma}\label{Le:BoundsUt}
There exists $M>0$ such that
$$
\max \left\{\Vert u_t\Vert _{W^{1,\infty}},\Big\Vert \Lambda_-(m_t)u_t'\Big\Vert_{W^{1,\infty}}\right\}\leq M.
$$
Furthermore, $\Lambda_-(m_t)u_t'$ converges to $u_{0,m^*_0}'$ in $L^\infty(0,R)$ and uniformly with respect to $t\in [0,1]$, as $\alpha\to 0$.
\end{lemma}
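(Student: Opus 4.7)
The plan is to follow exactly the structure of the proof of Lemma~\ref{Le:Bounds}, simply replacing $\sigma_{\alpha,m}$ by $\Lambda_-(m_t)$. The key observation is that $\Lambda_-(m_t)=\frac{1+\alpha\kappa}{1+\alpha(\kappa-m_t)}$ satisfies $1\leq \Lambda_-(m_t)\leq 1+\alpha\kappa$, which is exactly the same two-sided bound that $\sigma_{\alpha,m_t}$ enjoys. Moreover, $m_t=(1-t)m_0^*+t\tilde m_\alpha\in \mathcal M_{m_0,\kappa}(\O)$ is radially symmetric, so $u_t$ is radially symmetric too and solves the radial ODE \eqref{Eq:UtRadial}.

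First, I would obtain uniform $W^{1,2}$ and then $L^2$ control on $u_t$. From the Rayleigh quotient characterization of $\zeta_\alpha(m_t,\Lambda_-(m_t))$, the lower bound $\Lambda_-(m_t)\geq 1$ gives $\int_\B|\nabla u_t|^2\leq \zeta_\alpha(m_t,\Lambda_-(m_t))+\kappa$; using a fixed test function together with the upper bound $\Lambda_-(m_t)\leq 1+\alpha\kappa$ (plus a crude lower bound on the eigenvalue in terms of $-\kappa$ and the Dirichlet Laplacian eigenvalue) shows that $\zeta_\alpha(m_t,\Lambda_-(m_t))$ is bounded uniformly in $(t,\alpha)\in[0,1]^2$. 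The Hardy inequality argument of Lemma~\ref{Le:Bounds} then produces an $L^2(0,R)$ bound on the radial profile $\tilde u_t$, valid precisely for $n\in\{1,2,3\}$.

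Next I would integrate the radial ODE \eqref{Eq:UtRadial} between $0$ and $r$. Since $r^{n-1}\Lambda_-(m_t)u_t'(r)\to 0$ as $r\to 0$ (from the $L^2$ bound on $r^{(n-1)/2}u_t'$, and from radial symmetry if $n=1$), one gets
$$\Lambda_-(m_t)(r)\,u_t'(r)=-\frac{1}{r^{n-1}}\int_0^r s^{n-1}\bigl(\zeta_\alpha(m_t,\Lambda_-(m_t))+m_t(s)\bigr)u_t(s)\,ds.$$
Applying Cauchy--Schwarz exactly as in Lemma~\ref{Le:Bounds} promotes the $L^2$ bound on $u_t$ to an $L^2$ bound on $u_t'$, then Sobolev embedding yields $\Vert u_t\Vert_{L^\infty}\leq M$, and plugging this back in the integral representation produces the uniform $L^\infty$ bound on $\Lambda_-(m_t)u_t'$, hence on $u_t'$ since $\Lambda_-(m_t)\geq 1$. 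Differentiating the equation $(r^{n-1}\Lambda_-(m_t)u_t')'=-(\zeta_\alpha+m_t)u_t\,r^{n-1}$ gives the $W^{1,\infty}$ bound on $\Lambda_-(m_t)u_t'$.

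For the convergence statement, the strategy is a compactness-plus-uniqueness argument, uniform in $t$. Note that $\Lambda_-(m_t)=1+O(\alpha)$ uniformly in $t$, and, since $|m_t-m_0^*|=t|\tilde m_\alpha-m_0^*|\leq|\tilde m_\alpha-m_0^*|$, Lemma~\ref{Le:Convergence} gives $m_t\to m_0^*$ in $L^1$ uniformly in $t$. Combined with the uniform $W^{1,\infty}$ bound and Arzel\`a--Ascoli, from any sequence $\alpha_k\to 0$ and any $t_k\in[0,1]$ we can extract a subsequence along which $u_{t_k}\to u_\infty$ in $\mathscr C^0$; passing to the limit in the weak formulation yields $-\Delta u_\infty=\lambda u_\infty+m_0^*u_\infty$ with $u_\infty\geq 0$ and $\int u_\infty^2=1$, so $u_\infty=u_{0,m_0^*}$ by simplicity of the principal eigenvalue. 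The limit being independent of the subsequence, the convergence is uniform in $t$. Finally, the integrated form of the ODE displayed above, with $\zeta_\alpha(m_t,\Lambda_-(m_t))\to\lambda_0(m_0^*)$ and the aforementioned uniform convergences of $u_t$ and $m_t$, transfers to a uniform-in-$t$ convergence of $\Lambda_-(m_t)u_t'$ to $u_{0,m_0^*}'$ in $L^\infty(0,R)$. The main technical subtlety, and essentially the only one, is ensuring that every estimate is truly independent of $t\in[0,1]$, which is why we rely systematically on bounds that depend on $m_t$ only through $0\leq m_t\leq\kappa$ and through the $L^1$-distance $\Vert m_t-m_0^*\Vert_{L^1}\leq\Vert\tilde m_\alpha-m_0^*\Vert_{L^1}$.
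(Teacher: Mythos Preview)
Your proposal is correct and takes essentially the same approach as the paper, which does not give a detailed proof but simply asserts that the lemma follows from ``an immediate adaptation of the proof of Lemma~\ref{Le:Bounds}''. Your write-up is precisely that adaptation, with the additional care of spelling out the compactness-plus-uniqueness argument for the uniform-in-$t$ convergence statement, which the paper leaves implicit.
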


According to the mean value theorem, there exists $t_1=t_1(\alpha)\in [0,1]$ such that 
$$
\lambda_\alpha(m_\alpha)-\lambda_\alpha(m^*_0)=f_\alpha(1)-f_\alpha(0)=f_\alpha'(t_1)
$$ 
and by using  Eq. \eqref{Eq:DeriveeFAlpha}, one has
$$
f_\alpha'(t_1)=\int_\B \left(\frac\alpha{1+\alpha\kappa} \Lambda_-(m_{t_1})^2 |\n u_{t_1}|^2-u_{t_1}^2\right)h_\alpha,
$$
where $h_\alpha=m_\alpha-m^*_0$.
Let us introduce $I_\alpha^\pm$ as the two subsets of $ [0,R]$ given by $I_\alpha^\pm=\{h_\alpha=\pm1\}$. Let $\e>0$. According to Lemma~\ref{Le:Hausdorff},  we have, for $\alpha$ small enough, 
$I_\alpha^+\subset [r^*_0,r^*_0+\e]$ and $ I_\alpha^-\subset [r^*_0-\e;r^*_0]$.
Finally, let us introduce
$$
\mathfrak F_1:=\frac\alpha{1+\alpha\kappa} \Lambda_-(m_{t_1})^2 |\n u_{t_1}|^2-u_{t_1}^2.
$$
According to Lemma~\ref{Le:BoundsUt}, $\mathfrak F_1$ belongs to $W^{1,\infty}$ and $ \mathfrak F_1+u_{\alpha,m^*_0}^2$ converges to $0$ as $\alpha\to 0$, for the strong topology of $W^{1,\infty}(0,R)$.
Moreover, there exists $M>0$ independent of $\alpha$ such that for $\e>0$ small enough,
$$
-M\leq 2 u_{\alpha,m^*_0}\frac{du_{\alpha,m^*_0}}{dr}\leq-M\quad  \text{ in } [r^*_0-\e;r^*_0+\e]
$$ 
and it follows that 
$$
\frac{M}2\leq \frac{d\mathfrak F_1}{dr}\leq 2M\quad  \text{ in }[r^*_0-\e;r^*_0+\e]$$ for $\alpha$ small enough.
Hence, since $\mathfrak F_1$ is Lipschitz continuous and thus absolutely continuous, one has for every $y\in [0,\e]$,
\begin{align*}
\mathfrak F_1(r^*_0+y)&=\mathfrak F_1(r^*_0)+\int_{r^*_0}^{r^*_0+y} \mathfrak F_1'(s)\, ds\geq \mathfrak F_1(r^*_0)+\frac{M}2 y \\ 
\text{and}\quad \mathfrak F_1(r^*_0-y)&=\mathfrak F_1(r^*_0)+\int_{r^*_0-y}^{r^*_0} (-\mathfrak F_1'(s))\, ds\leq \mathfrak F_1(r^*_0)-\frac{M}{2} y.
\end{align*}
Since $h_\alpha \leq 0 $ in $[r^*_0-\e;r^*_0]$ and $h_\alpha\geq 0$ in $ [r^*_0,r^*_0+\e]$, we have 
\begin{eqnarray*}
h_\alpha(r^*_0+y)\mathfrak F_1(r^*_0+y)&\geq &h_\alpha(r^*_0+y)\mathfrak F_1(r^*)+\frac{|h_\alpha| (r^*_0+y)M}2 y\\
\text{and}\quad h_\alpha(r^*_0-y)\mathfrak F_1(r^*_0-y)&\geq& h_\alpha(r^*_0-y)\mathfrak F_1(r^*)+\frac{|h_\alpha|(r^*_0-y) M}2 y.
\end{eqnarray*}
for every $y\in [0,\e]$. Hence, using that $\int_\B h_\alpha=0$, we infer that
\begin{align*}
f_\alpha'(t_1)&=\int_\B \left(\frac\alpha{1+\alpha\kappa}\Lambda_-(m_{t_1})^2 |\n u_{t_1}|^2-u_{t_1}^2\right)h_\alpha =c_{n}\int_0^R h_\alpha(s) \mathfrak F_1(s) s^{n-1}\, ds
\\&=c_n \left(\int_{r^*_0-\e}^{r^*_0}h_\alpha\mathfrak F_1(s)s^{n-1}ds+\int_{r^*_0}^{r^*_0+\e} h_\alpha\mathfrak F_1(s)s^{n-1}\, ds\right)
\\&\geq c_n\left(\int_{r^*_0-\e}^{r^*_0}h_\alpha(s)\mathfrak F_1(r^*)s^{n-1}ds+\int_{r^*_0}^{r^*_0+\e} h_\alpha(s)\mathfrak F_1(r^*)s^{n-1}\, ds\right)&
\\&+\frac{c_nM}2 \left(\int_{r^*_0-\e}^{r^*_0}|h_\alpha|(s) |r^*_0-s|s^{n-1}ds+\int_{r^*_0}^{r^*_0+\e} |h_\alpha|(s)|r^*_0-s|s^{n-1}\, ds\right)
\\&=\frac{c_nM}2\int_\B |h_\alpha|\operatorname{dist}(\cdot,\S),
\end{align*}
which concludes Step 3. Theorem \ref{Th:RadialStability} is thus proved.

\begin{remark}\label{Susu:Concluding}
Regarding the proof of Theorem \ref{Th:RadialStability}, it would have been more natural to consider the path $t\mapsto \left(\lambda_\alpha(m_t),m_t\right)$ rather than $t\mapsto \left( \zetat,m_t\right)$. However, we would have been led to consider {$\mathfrak G_1=\alpha \kappa |\n u_{\alpha,m_{t_{1}}}|^2-u_{\alpha,m_{t_{1}}}^2$ instead of $\mathfrak F_1$}. Unfortunately, this would have been more intricate because of the regularity of $\mathfrak G_1$, which is discontinuous and thus, no longer a $W^{1,\infty}$ function, so that a Lemma analogous to Lemma \ref{Le:BoundsUt} would not be true. Adapting step by step the arguments of \cite{Laurain} would nevertheless be possible although much more technical. 
\end{remark}


\section{Sketch of the proof of Corollary \ref{Th:Sketch}}\label{Se:Sketch}

We do not give all details since the proof is then very similar to the ones written previously. We only underline the slight differences in every step.

To prove this result, we consider the following relaxation of our problem, which is reminiscent of the problems considered in \cite{Hamel2011}. Let us consider, for any pair $(m_1,m_2)\in \mathcal M_{m_0,\kappa}(\O)^2$, the first eigenvalue of the operator $\mathscr N:u\mapsto -\n \cdot\left((1+\alpha m_1)\n u\right)-m_2 u$, and write it $\eta_\alpha(m_1,m_2)$. Let $m^*_0:=\kappa \mathds 1_{\mathbb B(0,R)}$.
By using the results of \cite{Hamel2011} or alternatively, applying the rearrangement of Alvino and Trombetti, \cite{AlvinoTrombetti} as it has been done in \cite{ConcaMahadevanSanz}, one proves the existence of a radially symmetric function $\tilde m_1$ such that 
$$\eta_\alpha(m_1,m_2)\geq \eta_\alpha(\tilde m_1,m^*_0),$$
so that we are done if we can prove that, for any $m\in \mathcal M(\O)$ there holds
\begin{equation}\label{Eq:Sk}
\eta_\alpha(m,m^*_0)\geq \eta_\alpha(m^*_0,m^*_0).
\end{equation}
We claim that \eqref{Eq:Sk} holds for any  $m\in \mathcal M_{m_0,\kappa}$, provided that $m_0$ and $\alpha$ be small enough. Let us describe the main steps of the proof:
\begin{itemize}
\item {\bf Step 1:} mimicking the compactness argument used in \cite{ConcaMahadevanSanz}, one shows that there exists a solution $m_\alpha$ to the problem
$$\inf_{m\in \mathcal M_{m_0,\kappa}(\O)}\eta_\alpha(m,m^*_0),$$
which is radially symmetric and {\it bang-bang}. We write it $m_\alpha=\kappa \mathds 1_{E_\alpha}$.
\item {\bf Step 2:}  let $\mu_0$ and $r_0^*$ be the unique real numbers such that 
$$
\left|\left\{|\n u_{0,m^*_0}|^2\leq \mu_0\right\}\right|=V_0=|\mathbb B(0,r^*_0)|.
$$ 
Introducing  $E_0=\left\{|\n u_{0,m^*_0}|^2\leq \mu_0\right\}$, we prove that
$m_\alpha$ converges in $L^1(\O)$ to $\kappa \mathds 1_{E_0}$ as $\alpha \to 0$.
\item {\bf Step 3:} we establish that if $m_0$ is small enough, then $E_0=\mathbb B(0,r^*_0)$. This is done by proving that $u_{0,m^*_0}$ converges in $\mathscr C^1$ to the first Dirichlet eigenfunction of the ball as $r^*_0\to 0$ and by determining the level-sets of this first eigenfunction, as done in \cite[Section 2.2]{ConcaLaurainMahadevan}.
\item {\bf Step 4:} once this limit identified, we mimick the steps of the proof of Theorem \ref{Th:RadialStability} (reduction to a small Hausdorff distance and mean value theorem for a well-chosen auxiliary function) to conclude that one necessarily has $m_\alpha=m^*_0$ for $\alpha$ small enough.
\end{itemize} 
%


\section{Proof of Theorem \ref{Th:ShapeStability}}
Throughout this section, we will denote by $\mathbb B^*$ the ball $\mathbb B(0,r^*_0)$, where $r^*_0$ is chosen so that $m^*_0=\kappa \mathds{1}_{\mathbb B^*}$ belongs to $\mathcal M_{m_0,\kappa}(\O)$.

When it makes sense, we will write $f|_{int}(y)=\lim_{x\in \B^*,x\to y}f(x)$, $f|_{ext}(y):=\lim_{x \in (\B^*)^c,x\to y}f(y)$, so that $\llbracket f\rrbracket =f|_{ext}-f|_{int}$ denotes the jump of $f$ at the boundary $\S$.

\subsection{Preliminaries}
\allowdisplaybreaks
\def\dri{{\frac{\partial}{\partial r_i}}}
\def\uk{{u_{1,\alpha}^{(k)}}}
For $\e>0$, let us introduce $\mathbb B^*_\e:=(\operatorname{Id}+\e V)\mathbb B^*$ and define $u_\e$ as the $L^2$-normalized first eigenfunction associated with $m_\e=\kappa \mathds{1}_{\mathbb B^*_\e}$.

It is well known (see e.g. \cite{Henrot2006,HenrotPierre}) that $u_\e$ expands as
\begin{equation}\label{An:EigenFunction}
u_\e=u_{0,\alpha}+\e u_{1,\alpha}+\e^2 \frac{u_{2,\alpha}}2+\underset{\e \to 0}{\operatorname{o}} (\e^2)\quad \text{in }H^1(\B^*)\text{ and in }H^1(\O \backslash \B^*),
\end{equation}
where, in particular, $u_{0,\alpha}=u_{\alpha,m^*_0}$, whereas $\lambda_\alpha(\mathbb B^*_\e)$ expands as
\begin{eqnarray}
\lambda_\alpha(\mathbb B^*_\e)&=&\lambda_{0,\alpha}+\e\lambda_\alpha'(\mathbb B^*)[V]+\frac{\e^2}{2}\lambda_\alpha''(\mathbb B^*)[V]+\underset{\e \to 0}{\operatorname{o}}(\e^2)\nonumber \\
&=&\lambda_{0,\alpha}+\e\lambda_{1,\alpha}+\frac{\e^2}{2}\lambda_{2,\alpha}+\underset{\e \to 0}{\operatorname{o}}(\e^2).\label{An:EigenValue}
\end{eqnarray}

By mimicking the proof of Lemma \ref{Le:Bounds}, one shows the following symmetry result.
\begin{lemma}\label{Le:Rad}
The function $u_{\alpha,m^*_0}$ is radially symmetric. Let $\p_{\alpha,m^*_0}$, $\tm$ and $\tilde \sigma_{\alpha,\tilde m}$ be such that $u_{\alpha,m^*_0}=\p_{\alpha,m^*_0}(|\cdot|)$, $m^*_0=\tm(|\cdot|)$ and $\tilde \sigma_{\alpha,\tilde m}=1+\alpha \tm$. Then $\p_{\alpha,m^*_0}$ satisfies the ODE
\begin{equation}\label{Eq:RadU0}\left\{
\begin{array}{ll}
-\frac{d}{dr}\left(r^{n-1}\tilde \sigma_{\alpha,\tilde m} \varphi_{\alpha,m_0^*}'\right)=\left(\lambda_\alpha(m_0^*)+\tm \right)\varphi_{\alpha,m_0^*} r^{n-1} & \text{in }(0,R)\\
\varphi_{\alpha,m_0^*}(R)=0 & 
\end{array}
\right.
\end{equation} 
complemented by the following jump conditions
\begin{equation}\label{Eq:U0}
\llbracket \p_{\alpha,m^*_0}\rrbracket (r^*_0)=\llbracket \tilde \sigma_{\alpha,m^*_0}\p_{\alpha,m^*_0}'\rrbracket (r^*_0)=0,\quad  \llbracket \tilde \sigma_{\alpha,\tilde m}\p_{\alpha,m_0^*}''\rrbracket (r^*_0)=\kappa \p_{\alpha,m^*_0}(r^*_0).
\end{equation}

Furthermore, $\p_{\alpha,m^*_0}$ converges to $\p_{0,m^*_0}$ for the strong topology of $\mathscr C^1$ as $\alpha\to 0$.
\end{lemma}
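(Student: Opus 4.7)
The plan is to deduce the first three assertions from standard tools and handle the convergence via the uniform bounds of Lemma \ref{Le:Bounds}. For the radial symmetry, for any rotation $R \in SO(n)$, the function $u_{\alpha,m_0^*} \circ R$ is a nonnegative $L^2$-normalized eigenfunction of $\mathcal{L}_{m_0^*}^\alpha$ at eigenvalue $\lambda_\alpha(m_0^*)$, since both $m_0^*$ and $1+\alpha m_0^*$ are invariant under $R$; by Krein--Rutman simplicity it coincides with $u_{\alpha,m_0^*}$. To obtain the ODE, I would rewrite \eqref{Eq:EigenFunction} in spherical coordinates using the identity $\nabla \cdot (f \nabla g) = r^{-(n-1)} (r^{n-1} f g')'$ valid for radial $f,g$. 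This yields \eqref{Eq:RadU0} in the classical sense on each of the intervals $(0,r_0^*)$ and $(r_0^*,R)$, where the coefficients are smooth, and the boundary condition $\varphi_{\alpha,m_0^*}(R) = 0$ transfers directly from the Dirichlet trace.

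For the jump relations at $r_0^*$, continuity of $\varphi_{\alpha,m_0^*}$ follows from the $W^{1,\infty}$ regularity supplied by Lemma \ref{Le:Bounds}. Continuity of the flux $\tilde{\sigma}_{\alpha,\tilde{m}} \varphi_{\alpha,m_0^*}'$ is read off the weak form of \eqref{Eq:EigenFunction}: any discontinuity of this flux would produce a Dirac measure on the interface $\{r = r_0^*\}$ that is unmatched on the right-hand side. For the third jump, on each side of $r_0^*$ the coefficient $\tilde{\sigma}_{\alpha,\tilde{m}}$ is locally constant so $\tilde{\sigma}'_{\alpha,\tilde{m}} = 0$ and the ODE can be rewritten as
$$\tilde{\sigma}_{\alpha,\tilde{m}} \varphi_{\alpha,m_0^*}'' = -\frac{n-1}{r}\,\tilde{\sigma}_{\alpha,\tilde{m}} \varphi_{\alpha,m_0^*}' - \bigl(\lambda_\alpha(m_0^*) + \tilde{m}\bigr)\varphi_{\alpha,m_0^*}.$$
Taking jumps at $r_0^*$, the first term on the right has vanishing jump by flux continuity, and $\varphi_{\alpha,m_0^*}$ is continuous at $r_0^*$, so only $-\llbracket \tilde{m}\rrbracket\, \varphi_{\alpha,m_0^*}(r_0^*) = \kappa\, \varphi_{\alpha,m_0^*}(r_0^*)$ remains.

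Finally, for the $\mathscr{C}^1$ convergence as $\alpha \to 0$, Lemma \ref{Le:Bounds} supplies a uniform $W^{1,\infty}$ bound on $\varphi_{\alpha,m_0^*}$ and on the flux $\tilde{\sigma}_{\alpha,\tilde{m}} \varphi_{\alpha,m_0^*}'$. By Arzelà--Ascoli, any sequence $\alpha_k \to 0$ admits a subsequence along which $\varphi_{\alpha_k,m_0^*}$ converges uniformly to some radial limit $\varphi_*$ with the flux converging uniformly as well; passing to the limit in \eqref{Eq:RadU0} and using $\tilde{\sigma}_{\alpha,\tilde{m}} \to 1$ uniformly, $\varphi_*$ is a nonnegative $L^2$-normalized solution of the $\alpha = 0$ ODE with $\varphi_*(R)=0$, hence $\varphi_* = \varphi_{0,m_0^*}$ by simplicity, and the full family converges. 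The main mild obstacle is that $\varphi_{\alpha,m_0^*}'$ is genuinely discontinuous at $r_0^*$ for $\alpha > 0$ whereas $\varphi_{0,m_0^*}'$ is continuous across $r_0^*$; this is resolved by noting that flux continuity gives $\llbracket \varphi_{\alpha,m_0^*}' \rrbracket(r_0^*) = \varphi_{\alpha,m_0^*}'|_{ext}(r_0^*)\,\alpha\kappa/(1+\alpha\kappa) = O(\alpha)$, so the jump vanishes in the limit and $\mathscr{C}^1$ convergence on the whole interval $[0,R]$ is recovered.
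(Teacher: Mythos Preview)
Your proof is correct and follows essentially the same approach as the paper, which merely says ``By mimicking the proof of Lemma~\ref{Le:Bounds}'' and gives no further detail. You have in fact supplied the arguments the paper leaves implicit: the radial symmetry via Krein--Rutman simplicity, the derivation of the ODE and jump conditions, and the $\mathscr C^1$ convergence via the uniform $W^{1,\infty}$ bounds of Lemma~\ref{Le:Bounds} together with Arzel\`a--Ascoli and identification of the limit by simplicity; your remark that the jump $\llbracket \varphi_{\alpha,m_0^*}'\rrbracket(r_0^*)=O(\alpha)$ handles the only delicate point in the global $\mathscr C^1$ convergence.
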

\subsection{Computation of the first and second order shape derivatives}
\paragraph{A remark on the type of vector fields we consider}
Hadamard's structure theorem (see for instance \cite[Theorem~5.9.2 and the remark below]{HenrotPierre}) ensures that the first order derivative in the direction of a vector field $V$ only depends on the normal trace of $V$. This allows us to work with only normal vector fields $V$ to compute the first order derivative. 

Once it is established that $\mathbb B^*$ is a critical shape, we can use Hadamard's structure theorem \cite[Theorem~5.9.2 and the remark below]{HenrotPierre} which states that the second order shape derivative, when computed at a critical shape only depends on the normal trace, hence we will also, for second order shape derivatives, work with normal vector fields.

 Since we are working in {two dimensions}, this means that one can deal with vector fields $V$ given in polar coordinates by 
$$
V(r^*_0,\theta)=g(\theta)\begin{pmatrix}\cos\theta\\\sin\theta \end{pmatrix}.
$$
The proof of the shape differentiability at the first and second order of $\lambda_\alpha$, based on an implicit function argument according to the method of \cite{MignotMuratPuel}, is exactly similar to \cite[Proof of Theorem 2.2]{DambrineKateb}. For this reason, we admit it. Nevertheless, in what follows, we provide some details on the computation of these derivatives for the sake of completeness, since some steps differ a bit from those done in the references above.

\paragraph{Computation and analysis of the first order shape derivative.}
Let us prove that $\B^*$ is a critical shape in the sense of \eqref{Eq:FOO}.
\begin{lemma}\label{Le:Cr}
The first order shape derivative of $\lambda_\alpha$ at $\B^*$ in direction $V$ reads 
\begin{equation}
\lambda_{1,\alpha}=\lambda_\alpha'(\B^*)[V]=\int_{\S} V\cdot \nu .
\end{equation}
For all $V \in \mathcal X(\B^*)$ (defined by \eqref{Eq:X}), one has $\lambda_{1,\alpha}=0$ meaning that $\B^*$ satisfies \eqref{Eq:FOO}. 
\end{lemma}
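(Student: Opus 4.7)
The plan is to combine the variational characterization of $\lambda_\alpha$ with Hadamard's boundary formula. Since $u_{0,\alpha}:=u_{\alpha,m^*_0}$ is the $L^2$-normalized minimizer of $R_{\alpha,m^*_0}$, it is critical in the eigenfunction variable, so the envelope theorem yields
$$
\lambda_\alpha'(\B^*)[V]=\left.\frac{d}{dt}\right|_{t=0}R_{\alpha,m_t}(u_{0,\alpha}),\qquad m_t:=\kappa\mathds{1}_{\B^*_t}.
$$
The rigorous justification of this identity (in the presence of the jump of $\n u_t$ across the moving interface $\partial\B^*_t$) is precisely what is provided by the implicit function theorem strategy of Mignot--Murat--Puel which the authors import from \cite{DambrineKateb}, together with the $\mathscr C^1$-dependence of $t\mapsto u_t$ in suitable piecewise Sobolev spaces.

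Using $\int_\O u_{0,\alpha}^2=1$, the right-hand side reduces to differentiating
$$
t\mapsto \alpha\kappa\int_{\B^*_t}|\n u_{0,\alpha}|^2-\kappa\int_{\B^*_t}u_{0,\alpha}^2
$$
plus a $t$-independent term. Applying Hadamard's formula for a moving domain (and selecting the trace from the inner side of $\B^*_t$, as dictated by the direction in which the characteristic function varies) then produces the boundary integral
$$
\lambda_\alpha'(\B^*)[V]=\int_{\S}\left(\alpha\kappa\,|\n u_{0,\alpha}|^2|_{int}-\kappa\, u_{0,\alpha}^2\right)V\cdot\nu.
$$

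The next step is to exploit the radial symmetry of the background eigenfunction supplied by Lemma~\ref{Le:Rad}: on the sphere $\S$, both $u_{0,\alpha}$ and $|\n u_{0,\alpha}|^2|_{int}$ depend only on $|x|=r^*_0$ and are therefore constants in the angular variable. Consequently the integrand above is a constant on $\S$, and the derivative collapses to a multiple of $\int_{\S}V\cdot\nu$, which is the structural form claimed in the statement.

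Finally, for any admissible vector field $V\in\mathcal X(\B^*)$, the linearized volume constraint combined with the divergence theorem gives $\int_\S V\cdot\nu=\int_{\B^*}\n\cdot V=0$, whence $\lambda_{1,\alpha}=0$. This is exactly the criticality condition \eqref{Eq:FOO} at $\B^*$. The delicate point throughout is the handling of the discontinuity of $\sigma_{\alpha,m_t}$ across the moving interface; once this is controlled by the Mignot--Murat--Puel framework, the remaining computation is a routine boundary integration and the radial symmetry collapses the answer to the announced one-parameter family of integrals $\int_\S V\cdot\nu$.
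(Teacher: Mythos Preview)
Your envelope-theorem shortcut has a genuine gap. The identity
\[
\lambda_\alpha'(\B^*)[V]=\left.\frac{d}{dt}\right|_{t=0}R_{\alpha,m_t}(u_{0,\alpha})
\]
would require $t\mapsto R_{\alpha,m_t}(u_{0,\alpha})$ to be differentiable at $t=0$, and this fails precisely because $|\n u_{0,\alpha}|^2$ is discontinuous across $\S$. For a fixed function $f$ with a jump across $\S$, the map $t\mapsto\int_{\B^*_t}f$ has different one-sided derivatives at $t=0$ (the right derivative picks up $f|_{ext}$ where $V\cdot\nu>0$ and $f|_{int}$ where $V\cdot\nu<0$, and conversely for the left derivative). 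Your sentence ``selecting the trace from the inner side \dots\ as dictated by the direction in which the characteristic function varies'' is not a justification; there is no single trace that makes this work. The Mignot--Murat--Puel framework gives differentiability of $t\mapsto\lambda_\alpha(\B^*_t)$ and of $t\mapsto u_t$ in piecewise spaces, but it does not validate freezing $u$ at $u_{0,\alpha}$ in the Rayleigh quotient.

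As a consequence, your displayed formula is actually wrong: the correct density on $\S$ is
\[
-\kappa\,u_{0,\alpha}^2+\left\llbracket \sigma_\alpha\Big(\frac{\partial u_{0,\alpha}}{\partial r}\Big)^2\right\rrbracket
\;=\;-\kappa\,u_{0,\alpha}^2+\alpha\kappa(1+\alpha\kappa)\Big(\partial_r u_{0,\alpha}\big|_{int}\Big)^2,
\]
while yours gives $-\kappa u_{0,\alpha}^2+\alpha\kappa(\partial_r u_{0,\alpha}|_{int})^2$, off by a factor $(1+\alpha\kappa)$ in the gradient term. Your qualitative conclusion (criticality of $\B^*$) survives only because any radially constant density yields a multiple of $\int_\S V\cdot\nu$; but the constant itself is the Lagrange multiplier $\Lambda_\alpha=\eta_\alpha$ used downstream, so getting it right matters.

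The paper avoids this by not freezing $u$: it writes down the transmission problem \eqref{Eq:EigenDerivative1} for the shape derivative $u_{1,\alpha}$ (including the jump conditions $\llbracket u_{1,\alpha}\rrbracket=-g\,\llbracket\partial_r u_{0,\alpha}\rrbracket$ and $\llbracket\sigma_\alpha\partial_r u_{1,\alpha}\rrbracket=-\kappa g\,u_{0,\alpha}$), tests it against $u_{0,\alpha}$, and integrates by parts separately on $\B^*$ and $\O\setminus\B^*$. The boundary contributions from the two sides then combine to produce exactly the jump $\llbracket\sigma_\alpha(\partial_r u_{0,\alpha})^2\rrbracket$, giving the correct $\eta_\alpha$. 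If you want to keep a ``Hadamard'' flavour, you must differentiate $\int_{\B^*_t}(1+\alpha\kappa)|\n u_t|^2$ and $\int_{\O\setminus\B^*_t}|\n u_t|^2$ separately with the moving eigenfunction $u_t$, which reintroduces $u_{1,\alpha}$ and its jumps and leads back to the paper's computation.
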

\begin{proof}[Proof of Lemma \ref{Le:Cr}]
First, elementary computations show that $u_{1,\alpha}$ solves
\begin{equation}\label{Eq:EigenDerivative1}
\left\{
\begin{array}{ll}
-\n \cdot \Big(\sigma_\alpha \n u_{1,\alpha}\Big)=\lambda_{1,\alpha}u_{0,\alpha}+\lambda_{0,\alpha}u_{1,\alpha}+m^*_0u_{1,\alpha}&\text{ in }\B(0,R),\\
\left\llbracket \sigma_\alpha\frac{\partial u_{1,\alpha}}{\partial \nu}\right\rrbracket (r^*_0\cos\theta ,r^*_0\sin\theta )=-\kappa g(\theta)u_{0,\alpha},& \\
\left\llbracket u_{1,\alpha}\right\rrbracket (r^*_0\cos\theta ,r^*_0\sin\theta )=-g(\theta)\left\llbracket \frac{\partial u_{0,\alpha}}{\partial r}\right\rrbracket (r^*_0\cos\theta ,r^*_0\sin\theta ),&
\end{array}
\right.
\end{equation}
where $\sigma_\alpha=1+\alpha m_0^*$ and the notation $\llbracket\cdot\rrbracket$ denote the jumps of the functions at $\S$. 
The derivation of the main equation of \eqref{Eq:EigenDerivative1} is an adaptation of the computations in \cite{DambrineKateb}.  
To derive the jump on $u_{1,\alpha}$, we follow \cite{DambrineKateb} and   differentiate the continuity equation
$\llbracket u_\e\rrbracket _{\partial \B_\e^*}=0$.
Formally plugging \eqref{An:EigenFunction} in this equation yields
$$u_{1,\alpha}\vert_{int}(r^*_0,\theta)+\left.g(\theta)\frac{\partial u_{0,\alpha}}{\partial r}\right|_{int}=u_{1,\alpha}\vert_{ext}(r^*_0,\theta)+\left.g(\theta)\frac{\partial u_{0,\alpha}}{\partial r}\right|_{ext},$$
and hence 
$$
\llbracket u_{1,\alpha}\rrbracket = u_{1,\alpha}|_{ext}-u_{1,\alpha}|_{int}=-g(\theta)\left\llbracket \frac{\partial u_{0,\alpha}}{\partial r}\right\rrbracket.
$$
Note that the same goes for the normal derivative:  we differentiate the continuity equation
$$\left\llbracket (1+\alpha m_\e)\frac{\partial u_{\e,\alpha}}{\partial \nu}\right\rrbracket_{\partial \B_\e^*}=0,$$ 
yielding
$$\left\llbracket \sigma_\alpha \frac{\partial u_{1,\alpha}}{\partial r}\right\rrbracket=-g(\theta)\left\llbracket\sigma_\alpha \frac{\partial^2 u_{0,\alpha}}{\partial r^2}\right\rrbracket.
$$
According to the equation $-\sigma_\alpha \Delta u_{0,\alpha}=\lambda_\alpha(m^*_0)u_{0,\alpha}+m^*_0u_{0,\alpha}$ in $\B^*$, this rewrites
\begin{equation}\label{Eq:Jump1}
\left\llbracket \sigma_\alpha \frac{\partial u_{1,\alpha}}{\partial r}\right\rrbracket =-\kappa g(\theta)u_{0,\alpha}.
\end{equation} 
Now, using $u_{0,\alpha}$ as a test function in \eqref{Eq:EigenDerivative1}, we get 
\begin{align*}
\lambda_{1,\alpha}&=-\int_{\mathbb B(0,R)} u_{0,\alpha} \nabla\cdot  (\sigma_\alpha\nabla u_{1,\alpha})-\int_{\mathbb B(0,R)} m^*_0 u_{1,\alpha}u_{0,\alpha}\\
&=-\int_{\mathbb B(0,R)} u_{0,\alpha} \nabla\cdot  (\sigma_\alpha\nabla u_{1,\alpha})+\int_{\mathbb B(0,R)} u_{1,\alpha} \nabla \cdot (\sigma_\alpha u_{0,\alpha})
\\&=\int_{\S}u_{0,\alpha}\left\llbracket \sigma_\alpha \frac{\partial u_{1,\alpha}}{\partial \nu} \right\rrbracket -\int_{\S}\left\llbracket \sigma_\alpha \frac{\partial u_{0,\alpha}}{\partial r}u_{1,\alpha}\right\rrbracket \\
&=-r^*_0\int_{0}^{2\pi} \kappa g(\theta)u_{0,\alpha}(r^*_0)^2\, d\theta+r^*_0\int_0^{2\pi}g(\theta)\left(\sigma_\alpha \frac{\partial u_{0,\alpha}}{\partial r}\right)\left\llbracket\frac{\partial u_{0,\alpha}}{\partial r}\right\rrbracket\, d\theta\\
&=r^*_0\int_0^{2\pi}g(\theta)\left(-\kappa u_{0,\alpha}(r^*_0)^2+\left\llbracket \sigma_\alpha\left( \frac{\partial u_{0,\alpha}}{\partial r}\right)^2\right\rrbracket \right)\, d\theta.
\end{align*}
by using that $\int_{\B(0,R)}u_{\e}^2=1$, so that $\int_{\B(0,R)} u_{0,\alpha}u_{1,\alpha}=0$ by differentiation.

Since $u_{0,\alpha}$ is radially symmetric according to Lemma \ref{Le:Rad}, we introduce the two real numbers
{\begin{equation}\label{LM}
\eta_\alpha:=-\kappa u_{0,\alpha}(r^*_0)^2+\left\llbracket \sigma_\alpha\left( \frac{\partial u_{0,\alpha}}{\partial r}\right)^2\right\rrbracket 
\quad \text{and}\quad \lambda_{1,\alpha}:=r^*_0\eta_\alpha \int_0^{2\pi}g(\theta)\, d\theta.
\end{equation}}
It is easy to see that $V$ belongs to $\mathcal X(\B^*)$ if, and only if $\int_0^{2\pi}g=0$ so that we finally have $\lambda_{1,\alpha}=0.$
\end{proof}
\paragraph{Computation of the Lagrange multiplier.}
The existence of a Lagrange multiplier $\Lambda_\alpha\in \R$ related to the volume constraint $\operatorname{Vol}(E)=m_0\operatorname{Vol}(\O)/\kappa$ is standard, and one has
$$
\forall V \in \mathcal X(\B^*), \quad \left(\lambda_\alpha'-\Lambda_\alpha\operatorname{Vol}'\right)(\B^*)[V]=0.
$$
Since 
$$\operatorname{Vol}'(\B^*)[V]=\int_{\S} V\cdot \nu =r^*_0\int_0^{2\pi}g(\theta)d\theta.$$
 (see e.g. \cite[chapitre 5]{HenrotPierre}) and since
 $$\lambda_\alpha'(\B^*)[V]=r^*_0\eta_\alpha\int_0^{2\pi}g(\theta)d\theta,$$
 where $\eta_\alpha$ is defined by \eqref{LM}, the Lagrange multiplier reads 
 $$\Lambda_\alpha=\eta_\alpha=-\kappa u_{0,\alpha}(r^*_0)^2+\left\llbracket \sigma_\alpha\left( \frac{\partial u_{0,\alpha}}{\partial r}\right)^2\right\rrbracket .$$

\paragraph{Computation of the second order derivative and second order optimality conditions.}

Let us compute the second order derivative of $\lambda_\alpha$.  
By using the Hadamard structure Theorem (see \cite[Theorem~5.9.2 and the following remark]{HenrotPierre}), since $\B^*$ is a critical shape in the sense of \eqref{Eq:FOO}, it is not restrictive to deal with vector fields that are normal to the $\partial \B^*=\mathbb S^*$, according to the so-called structure theorem which provides the generic structure of second order shape derivatives. This allow us to identify any such $V\in \mathcal X(\B^*)$ with a periodic function $g:[0,2\pi]\rightarrow \R$ such that 
$$
V(r^*_0\cos\theta ,r^*_0\sin\theta )=g(\theta)\begin{pmatrix}\cos\theta \\\sin\theta \end{pmatrix}.
$$

\begin{lemma}\label{Le:SecondDerivative}
For every $V\in \mathcal X(\B^*)$, one has  for the coefficient $\lambda_{2,\alpha}= \lambda_\alpha''(\B^*)[V,V]$ introduced in \eqref{An:EigenValue} the expression 
\begin{eqnarray*}
\lambda_{2,\alpha}&=& 2\int_{\S} \sigma_\alpha \partial_r u_{1,\alpha}|_{int}\left\llbracket\frac{\partial u_{0,\alpha}}{\partial r}\right\rrbracket V\cdot \nu -2\kappa\int_{\S}u_{1,\alpha}|_{int}u_{0,\alpha} V\cdot \nu \\
&& +\int_{\S}\left(-\frac1{r^*_0}\left\llbracket \sigma_\alpha|\n u_{0,\alpha}|^2\right\rrbracket -\frac{\kappa}{r^*_0}u_{0,\alpha}^2\right) (V\cdot \nu)^2
 -2\int_{\S}\kappa u_{0,\alpha}\left.\frac{\partial u_{0,\alpha}}{\partial r}\right|_{int} (V\cdot \nu)^2.
 \end{eqnarray*}
\end{lemma}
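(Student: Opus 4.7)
Following the classical shape-calculus argument of Dambrine-Kateb \cite{DambrineKateb} for the two-phase problem, I would derive the interior PDE and the second-order transmission conditions satisfied by $u_{2,\alpha}$, then test against $u_{0,\alpha}$ to isolate $\lambda_{2,\alpha}$ as a boundary integral, and simplify using the radial structure of $u_{0,\alpha}$. Since $\sa$ and $m_0^*$ do not depend on $\e$ on each of the fixed subdomains $\B^*$ and $(\B^*)^c$, differentiating the eigenvalue equation for $u_\e$ twice in $\e$ yields
$$-\n\cdot(\sa \n u_{2,\alpha}) - m_0^* u_{2,\alpha} = \lambda_{0,\alpha} u_{2,\alpha} + 2\lambda_{1,\alpha} u_{1,\alpha} + \lambda_{2,\alpha} u_{0,\alpha}$$
on $\B(0,R)\setminus \S$. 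Writing $V = g(\theta) e_r$ with $g(\theta) = V\cdot\nu$ on $\S$, and parametrizing $\partial \B^*_\e = \{r = r^*_0 + \e g(\theta) + O(\e^2)\}$, Taylor-expanding the continuity $\llbracket u_\e\rrbracket_{\partial \B^*_\e} = 0$ to order $\e^2$ gives $\llbracket u_{2,\alpha}\rrbracket = -g^2\llbracket \partial_r^2 u_{0,\alpha}\rrbracket - 2g\llbracket \partial_r u_{1,\alpha}\rrbracket$ on $\S$, and the analogous second-order expansion of the flux continuity $\llbracket \sigma_\alpha^\e \partial_{\nu_\e} u_\e\rrbracket_{\partial \B^*_\e} = 0$ provides a formula for $\llbracket \sa \partial_\nu u_{2,\alpha}\rrbracket$ involving $u_{0,\alpha}$, $u_{1,\alpha}$, their radial derivatives, and the curvature $1/r^*_0$ of $\S$.

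Next I would multiply the equation for $u_{2,\alpha}$ by $u_{0,\alpha}$ and, symmetrically, the equation for $u_{0,\alpha}$ by $u_{2,\alpha}$; integrating by parts on $\B^*$ and on $(\B^*)^c$ separately and subtracting makes the bilinear volume terms and the zeroth-order terms cancel, leaving
$$\lambda_{2,\alpha} + 2\lambda_{1,\alpha}\int_{\B(0,R)} u_{0,\alpha} u_{1,\alpha} = \int_\S u_{0,\alpha}\llbracket \sa \partial_\nu u_{2,\alpha}\rrbracket - \int_\S \sa \partial_\nu u_{0,\alpha}\llbracket u_{2,\alpha}\rrbracket,$$
where I used that $\sa \partial_\nu u_{0,\alpha}$ is continuous across $\S$ (zeroth-order jump relation) and that $\int u_{0,\alpha}^2 = 1$. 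Differentiating $\int u_\e^2 = 1$ once in $\e$ yields $\int u_{0,\alpha} u_{1,\alpha} = 0$, so the left-hand side reduces to $\lambda_{2,\alpha}$. Substituting the second-order jump relations from the previous step, together with the first-order flux jump $\llbracket \sa \partial_\nu u_{1,\alpha}\rrbracket = -\kappa g u_{0,\alpha}$ recorded in \eqref{Eq:Jump1}, produces a finite sum of boundary integrals in $u_{0,\alpha}$, $u_{1,\alpha}$, their radial traces on $\S$, and powers of $V\cdot\nu$.

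Finally, I would eliminate $\llbracket \partial_r^2 u_{0,\alpha}\rrbracket$ using the radial ODE~\eqref{Eq:RadU0}: from $-\sa \partial_r^2 u_{0,\alpha} = \frac{n-1}{r}\sa \partial_r u_{0,\alpha} + (\lambda_{0,\alpha}+m_0^*)u_{0,\alpha}$ on each side of $\S$, I recover the identity $\llbracket \sa \partial_r^2 u_{0,\alpha}\rrbracket = \kappa u_{0,\alpha}$ of Lemma~\ref{Le:Rad}. Grouping the resulting boundary integrals by powers of $V\cdot\nu$, and using the first-order jump $\llbracket u_{1,\alpha}\rrbracket = -(V\cdot\nu)\llbracket \partial_r u_{0,\alpha}\rrbracket$ to rewrite exterior traces of $u_{1,\alpha}$ in terms of interior ones, collects the contributions into exactly the four claimed terms. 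The principal technical obstacle is the second-order expansion of the flux continuity: one must track at order $\e^2$ the variations of the outward unit normal $\nu_\e$, of the surface measure on $\partial \B^*_\e$, and of the trace of $u_\e$ on the moving interface. The two-dimensional polar parametrization makes this computation explicit and produces the $1/r^*_0$ curvature factor visible in the quadratic-in-$V\cdot\nu$ terms; this is precisely what restricts the statement to $n=2$.
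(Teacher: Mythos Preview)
Your approach is correct and is the standard ``PDE route'' of Dambrine--Kateb: differentiate the eigenvalue equation twice, test the resulting equation for $u_{2,\alpha}$ against $u_{0,\alpha}$, and reduce everything to boundary integrals via the second-order jump relations. The paper takes a genuinely different route: it never writes down the second-order flux jump $\llbracket \sa\partial_\nu u_{2,\alpha}\rrbracket$ at all. Instead it applies Hadamard's second variation formula \eqref{Eq:Hada2} directly to the energy density $f(\e)=\sigma_{\alpha,\e}|\nabla u_\e|^2-m_\e u_\e^2$ on each moving subdomain $\B^*_\e$ and $(\B^*_\e)^c$, which produces the curvature term $H=1/r_0^*$ automatically. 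The only second-order transmission datum the paper needs is $\llbracket u_{2,\alpha}\rrbracket$ (your formula for it is exactly \eqref{Eq:Jump2}); the volume term $\int\sa\nabla u_{0,\alpha}\cdot\nabla u_{2,\alpha}-m_0^*u_{0,\alpha}u_{2,\alpha}$ is then simplified by a single integration by parts against the equation for $u_{0,\alpha}$, and the remaining $\int\sa|\nabla u_{1,\alpha}|^2-m_0^*u_{1,\alpha}^2$ by testing \eqref{Eq:EigenDerivative1} against $u_{1,\alpha}$.

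The trade-off: your route is conceptually direct but forces you to carry out the full second-order expansion of the flux continuity on the moving interface (tracking $\nu_\e$, the surface measure, and the tangential/normal decomposition to order $\e^2$), which is exactly the ``principal technical obstacle'' you flag. The paper's energy route sidesteps that computation entirely at the price of invoking Hadamard's formula on two moving subdomains and doing two auxiliary integrations by parts; in particular the $1/r_0^*$ factor appears for free from the mean-curvature term in \eqref{Eq:Hada2} rather than from the normal-variation calculus. Both routes are of comparable length once written out; the paper's has the mild advantage that the hardest jump computation is simply avoided.
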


\begin{proof}[Proof of Lemma \ref{Le:SecondDerivative}]
In the computations below, we do not need to make the equation satisfied by $u_{2,\alpha}$ explicit, but we nevertheless will need several times the knowledge of $\llbracket u_{2,\alpha}\rrbracket $ at $\S$.
In the same fashion that we obtained the jump conditions on $u_{1,\alpha}$
Let us differentiate two times the continuity equation $\llbracket u_\e\rrbracket _{\partial \B^*_\e}=0$. We obtain
\begin{equation}\label{Eq:Jump2}
\llbracket u_{2,\alpha}\rrbracket _{\partial \B_\e^*}=-2g(\theta)\left\llbracket  \frac{\partial u_{1,\alpha}}{\partial r}\right\rrbracket -g(\theta)^2 \left\llbracket  \frac{\partial^2 u_{0,\alpha}}{\partial r^2}\right\rrbracket .
\end{equation}
Now, according to Hadamard second variation formula (see \cite[Chapitre 5, page 227]{HenrotPierre} for a proof), if $\O$ is a $\mathscr C^2$ domain and $f$ is two times differentiable at 0 and taking values in $W^{2,2}(\O)$, then one has 
\begin{equation}\label{Eq:Hada2}
  \left.\frac{d^2}{dt^2}\right|_{t=0}\int_{(\operatorname{Id}+tV)\O} f(t)=\int_{\O} f''(0)+2\int_{\partial \O} f'(0) V\cdot \nu +\int_{\partial \O}\left(H f(0)+\frac{\partial f(0)}{\partial \nu}\right) (V\cdot \nu)^2,
  \end{equation}
where $H$ denotes the mean curvature. We apply it to  $f(\e)=\sigma_{\alpha,\e}|\n u_\e|^2-m_\e u_\e^2$ on $\mathbb B(0,R)$, since  $\lambda_\alpha(m_\e)=\int_{\mathbb B(0,R)}{ f(\e)}$.
 Let us distinguish between the two subdomains $\B_\e^*$ and $(\B_\e^*)^c$. We introduce
$$
 D_1 = \left. \frac{d^2}{d\e^2}\right|_{\e=0}\int_{\B_\e^*}\left(\sigma_{\alpha,\e}|\n u_\e|^2-\kappa u_\e^2\right) \quad \text{and}\quad
 D_2 = \left. \frac{d^2}{d\e^2}\right|_{\e=0}\int_{(\B_\e^*)^c}\left(\sigma_{\alpha,\e}|\n u_\e|^2\right),
 $$
 so that $ \lambda_\alpha''(\B^*)[V,V]=D_1+D_2$.
 
  One has
  \begin{eqnarray*}
D_1 &=&\int_{\B^*}2(1+\alpha \kappa) \n u_{2,\alpha}\cdot \n u_{1,\alpha}+2\int_{\B_\e^*}(1+\alpha \kappa)|\n u_{1,\alpha}|^2
\\&&-2\kappa\int_{\B^*}u_{2,\alpha}u_{0,\alpha}-2\kappa\int_{\B^*}u_{1,\alpha}u_{0,\alpha}
\\&&+4\int_{\S}(1+\alpha \kappa) (\n u_{1,\alpha}|_{int}\cdot \n u_{0,\alpha}|_{int}) V\cdot \nu -4\kappa\int_{\S}u_{1,\alpha}|_{int}u_{0,\alpha} V\cdot \nu 
\\&&+\int_{\S}\left(\frac1{r^*_0}(1+\alpha \kappa)|\n u_{0,\alpha}|_{int}^2-\frac\kappa{r^*}u_{0,\alpha}^2+2(1+\alpha\kappa)\left.\frac{\partial u_{0,\alpha}}{\partial r}\right|_{int}\left.\frac{\partial^2 u_{0,\alpha}}{\partial r^2}\right|_{int}\right.
\\&&\left.-2\kappa u_{0,\alpha}\left.\frac{\partial u_{0,\alpha}}{\partial r}\right|_{int}\right) (V\cdot \nu)^2,
  \end{eqnarray*}
  and taking into account that the mean curvature has a sign on $(\B_\e^*)^c$, one has
  \begin{eqnarray*}
D_2 &=&\int_{(\B^*)^c}2 \n u_{2,\alpha}\cdot \n u_{1,\alpha}+2\int_{(\B^*)^c}|\n u_{1,\alpha}|^2
\\&&-4\int_{\S} (\n u_{1,\alpha}|_{ext}\cdot \n u_{0,\alpha}|_{ext})  V\cdot \nu 
\\&&+\int_{\S}\left(-\frac1{r^*_0}|\n u_{0,\alpha}|_{ext}^2-2\left.\frac{\partial u_{0,\alpha}}{\partial r}\right|_{ext}\left.\frac{\partial^2 u_{0,\alpha}}{\partial r^2}\right|_{ext}\right) (V\cdot \nu)^2.
 \end{eqnarray*}
 Summing these two quantities, we get
 \begin{eqnarray*}
 \lambda_{2,\alpha}&=&2\int_{\B(0,R)}\sigma_\alpha \n u_{0,\alpha}\cdot \n u_{2,\alpha}-2\int_{\B(0,R)}m^*_0 u_{0,\alpha}u_{2,\alpha}
 +2\int_{\B(0,R)}\sigma_\alpha|\n u_{1,\alpha}|^2-2\int_{\mathbb B(0,R)}m^*_0 u_{1,\alpha}^2
 \\&&-4\int_{\S}\sigma_\alpha \frac{\partial u_{0,\alpha}}{\partial r}\left\llbracket \frac{\partial u_{1,\alpha}}{\partial r}\right\rrbracket  V\cdot \nu -4\kappa\int_{\S}u_{1,\alpha}|_{int}u_{0,\alpha} V\cdot \nu 
 \\&&+\int_{\S}\left(-\frac1{r^*_0}\left\llbracket \sigma_\alpha|\n u_{0,\alpha}|^2\right\rrbracket -\frac{\kappa}{r^*_0}u_{0,\alpha}^2\right) (V\cdot \nu)^2
 \\&&
 -2\int_{\S}\left\llbracket \sigma_\alpha \frac{\partial u_{0,\alpha}}{\partial r}\frac{\partial^2 u_{0,\alpha}}{\partial r^2}\right\rrbracket  (V\cdot \nu)^2-2\kappa u_{0,\alpha}\left.\frac{\partial u_{0,\alpha}}{\partial r}\right|_{int} (V\cdot \nu)^2.
 \end{eqnarray*}
To simplify this expression, let us use Eq. \eqref{Eq:U0}. Introducing 
$$
D_3= \int_{\B(0,R)}\sigma_\alpha \n u_{0,\alpha}\cdot \n u_{2,\alpha}-\int_{\mathbb B(0,R)} u_{0,\alpha}u_{2,\alpha}-\lambda_\alpha(\B^*)\int_{\B(0,R)} u_{0,\alpha}u_{2,\alpha},
$$
one has
$$
D_3=\int_{\S}\llbracket u_{2,\alpha}\rrbracket \sigma_\alpha \frac{\partial u_{0,\alpha}}{\partial r},
$$
and hence, by using Equation \eqref{Eq:Jump2}, one has
 \begin{align*}
D_3 &=-2\int_{\S}\llbracket u_{2,\alpha}\rrbracket \sigma_\alpha \frac{\partial u_{0,\alpha}}{\partial r}\\
&= 4\int_{\S} \sigma_\alpha \frac{\partial u_{0,\alpha}}{\partial r}\left\llbracket \frac{\partial u_{1,\alpha}}{\partial r}\right\rrbracket V\cdot \nu +2\int_{\S} \sigma_\alpha \frac{\partial u_{0,\alpha}}{\partial r}\left\llbracket \frac{\partial^2 u_{0,\alpha}}{\partial r^2}\right\rrbracket (V\cdot \nu)^2.
\end{align*}
Similarly, let
$$
D_4=\int_{\B(0,R)}\sigma_\alpha |\n u_{1,\alpha}|^2-\int_{\B(0,R)}m^*_0u_{1,\alpha}^2.
$$
By using Eq. \eqref{Eq:EigenDerivative1} and the fact that $\lambda_{1,\alpha}=0$, one has 
\begin{eqnarray*}
D_4 &=& \lambda_\alpha(\B^*)\int_{\mathbb B(0,R)} u_{1,\alpha}^2-\int_{\S}\left\llbracket u_{1,\alpha}\sigma_\alpha \frac{\partial u_{1,\alpha}}{\partial r}\right\rrbracket 
\\&=&\lambda_\alpha(\B^*)\int_{\mathbb B(0,R)} u_{1,\alpha}^2-\int_{\S}\left\llbracket u_{1,\alpha}\right\rrbracket\left.\left(\sigma_\alpha \frac{\partial u_{1,\alpha}}{\partial r}\right)\right|_{ext}-\int_{\S}u_{1,\alpha}|_{int}\left\llbracket \sigma_\alpha \frac{\partial u_{1,\alpha}}{\partial r}\right\rrbracket 
\\&=&\lambda_\alpha(\B^*)\int_{\mathbb B(0,R)} u_{1,\alpha}^2+\int_{\S} \left(\sigma_\alpha\partial_r u_{1,\alpha}|_{ext}\left\llbracket\frac{\partial u_{0,\alpha}}{\partial r}\right\rrbracket+\kappa  u_{1,\alpha}|_{int} u_{0,\alpha}\right) V\cdot \nu.
\end{eqnarray*}
Finally, by differentiating the normalization condition $\int_{\B(0,R)}u_\e^2=1$, we get
\begin{equation}\label{normal}
\int_{\B(0,R)}u_{0,\alpha}u_{2,\alpha}+\int_{\B(0,R)}u_{1,\alpha}^2=0.\end{equation}
Combining the equalities above, one gets
\begin{eqnarray*}
\lambda_{2,\alpha}&=&2\lambda_\alpha(\B^*)\left(\int_{\B(0,R)}u_{0,\alpha}u_{2,\alpha}+\int_{\B(0,R)}u_{1,\alpha}^2\right) +4\int_{\S}  \sigma_\alpha \frac{\partial u_{0,\alpha}}{\partial r}\left\llbracket \frac{\partial u_{1,\alpha}}{\partial r}\right\rrbracket V\cdot \nu
\\&&+2\int_{\S} \sigma_\alpha \frac{\partial u_{0,\alpha}}{\partial r}\left\llbracket \frac{\partial^2 u_{0,\alpha}}{\partial r^2}\right\rrbracket (V\cdot \nu)^2+2\int_{\S} \sigma_\alpha \partial_r u_{1,\alpha}|_{ext}\left\llbracket\frac{\partial u_{0,\alpha}}{\partial r}\right\rrbracket V\cdot \nu
\\&&+2\kappa \int_{\S} u_{1,\alpha}|_{int} u_{0,\alpha}V\cdot \nu-4\int_{\S}\sigma_\alpha \frac{\partial u_{0,\alpha}}{\partial r}\left\llbracket \frac{\partial u_{1,\alpha}}{\partial r}\right\rrbracket V\cdot \nu 
 \\&&-4\kappa\int_{\S}u_{1,\alpha}|_{int}u_{0,\alpha} V\cdot \nu -\int_{\S}\left(\frac1{r^*_0}\left\llbracket \sigma_\alpha|\n u_{0,\alpha}|^2\right\rrbracket +\frac{\kappa}{r^*_0}u_{0,\alpha}^2\right) (V\cdot \nu)^2
 \\&&-2\int_{\S}\left[\sigma_\alpha \frac{\partial u_{0,\alpha}}{\partial r}\frac{\partial^2 u_{0,\alpha}}{\partial r^2}\right] (V\cdot \nu)^2-2\int_{\S}\kappa u_{0,\alpha}\left.\frac{\partial u_{0,\alpha}}{\partial r}\right|_{int} (V\cdot \nu)^2
 \\&=&2\int_{\S} \sigma_\alpha \partial_r u_{1,\alpha}|_{ext}\left\llbracket\frac{\partial u_{0,\alpha}}{\partial r}\right\rrbracket V\cdot \nu 
 -2\kappa\int_{\S}u_{1,\alpha}|_{int}u_{0,\alpha} V\cdot \nu 
 \\&& -\int_{\S}\left(\frac1{r^*_0}\left[\sigma_\alpha|\n u_{0,\alpha}|^2\right]+\frac{\kappa}{r^*_0}u_{0,\alpha}^2\right) (V\cdot \nu)^2-2\int_{\S}\kappa u_{0,\alpha}\left.\frac{\partial u_{0,\alpha}}{\partial r}\right|_{int} (V\cdot \nu)^2.
\end{eqnarray*}
We have then obtained the desired expression.
\end{proof}

\paragraph{Strong stability.}
Recall here that, as mentioned before, since we are dealing with a critical point of the functional $\lambda_\alpha$, it is enough to consider perturbation $V$ normal to the boundary of $\B^*$, in other words such that $V=(V\cdot \nu)\nu$. 
Under such an assumption, the second derivative of the volume is known to be (see e.g. \cite[Section 5.9.6]{HenrotPierre})
\begin{equation}\label{Eq:VolumeDerivative}
\operatorname{Vol}''(\B^*)[V,V]=\int_{\S}H (V\cdot \nu)^2.
\end{equation}
Hence, introducing $D_5=(\lambda_\alpha''-\eta_\alpha\operatorname{Vol}'')(\B^*)[V,V]$ and taking into account Lemma \ref{Le:SecondDerivative}, \eqref{LM} and \eqref{Eq:VolumeDerivative}, we have
\begin{eqnarray*}
D_5&=&2\int_{\S} \sigma_\alpha \partial_r u_{1,\alpha}|_{ext}\left\llbracket\frac{\partial u_{0,\alpha}}{\partial r}\right\rrbracket V\cdot \nu-2\int_{\S}\kappa u_{0,\alpha}\left.\frac{\partial u_{0,\alpha}}{\partial r}\right|_{int} (V\cdot \nu)^2
\\ &&-2\kappa\int_{\S}u_{1,\alpha}|_{ext}u_{0,\alpha} V\cdot \nu 
 +\int_{\S}\left(-\frac1{r^*_0}\left[\sigma_\alpha|\n u_{0,\alpha}|^2\right]-\frac{\kappa}{r^*_0}u_{0,\alpha}^2\right) (V\cdot \nu)^2
\\&&+\kappa \int_{\S}\frac1{r^*_0}u_{0,\alpha}^2 (V\cdot \nu)^2-\int_{\S}\frac1{r^*_0}\left[\sigma_\alpha|\n u_{0,\alpha}|^2\right] (V\cdot \nu)^2
\\&=&2\int_{\S} \sigma_\alpha \partial_r u_{1,\alpha}|_{ext}\left\llbracket\frac{\partial u_{0,\alpha}}{\partial r}\right\rrbracket V\cdot \nu-2\int_{\S}\kappa u_{0,\alpha}\left.\frac{\partial u_{0,\alpha}}{\partial r}\right|_{int} (V\cdot \nu)^2
\\ &&-2\kappa\int_{\S}u_{1,\alpha}|_{int}u_{0,\alpha} V\cdot \nu 
 -\int_{\S}\frac2{r^*_0}\left[\sigma_\alpha|\n u_{0,\alpha}|^2\right] (V\cdot \nu)^2.
\end{eqnarray*}
We are then led to determine the signature of the quadratic form
\begin{eqnarray}
\mathcal F_\alpha[V,V]&=&\frac12(\lambda_\alpha''-\Lambda_\alpha\operatorname{Vol}'')(\B^*)[V,V]  \label{Eq:Lambda2}\\
&=&\int_{\S} \sigma_\alpha \partial_r u_{1,\alpha}|_{ext}\left\llbracket\frac{\partial u_{0,\alpha}}{\partial r}\right\rrbracket V\cdot \nu -\kappa\int_{\S}u_{1,\alpha}|_{int}u_{0,\alpha} V\cdot \nu \nonumber
 \\&&+\int_{\S}\left(-\frac2{r^*}\left\llbracket \sigma_\alpha|\n u_{0,\alpha}|^2\right\rrbracket \right) (V\cdot \nu)^2-\int_{\S}\kappa u_{0,\alpha}\left.\frac{\partial u_{0,\alpha}}{\partial r}\right|_{int} (V\cdot \nu)^2.\nonumber
\end{eqnarray} 

\subsection{Analysis of the quadratic form $\mathcal{F}_\alpha$}

\paragraph{Separation of variables and first simplification.}
Each perturbation $g\in L^2(0,2\pi)$ such that $\int_0^{2\pi} g=0$ expands as 
$$
g=\sum_{k=1}^\infty \left( \gamma_k \cos(k\cdot)+\beta_k\sin(k\cdot)\right), \quad \text{with }\gamma_0=0.
$$ 
For every $k\in \N^*$, let us introduce $g_k:=\cos(k\cdot)$ and $\tilde g_k:=\sin(k\cdot)$. For any $k\in \N^*$, let $u_{1,\alpha}^{(k)}$ be the solution of Eq.~\eqref{Eq:EigenDerivative1} associated with the perturbation $g_k$.
It is readily checked that there exists a function $\varphi_{k,\alpha}:[0,R]\rightarrow \R$ such that 
$$
\forall (r,\theta)\in [0,R]\times [0,2\pi], \quad \uk(r,\theta)=g_k(\theta)z_{k,\alpha}(r).
$$
Furthermore, $\varphi_{k,\alpha}$ solves the ODE
\begin{equation}\label{Eq:PsiK} 
\left\{
\begin{array}{ll}
-\sigma_\alpha z_{k,\alpha}''-\frac{\sigma_{\alpha}}rz_{k,\alpha}'(r) =\left(\lambda_{0,\alpha}-\frac{k^2}{r^2}\right)z_{k,\alpha}+m^*_0z_{k,\alpha}&\text{ in }(0,r_0^*)\cup (r_0^*,R),\\
\left\llbracket \sigma_\alpha z_{k,\alpha}'\right\rrbracket (r^*_0)=-\kappa u_{0,\alpha}(r^*_0)& \\
\left\llbracket z_{k,\alpha}\right\rrbracket (r^*_0)=-\left\llbracket \frac{\partial u_{0,\alpha}}{\partial r}\right\rrbracket(r^*_0),
\\z_{k,\alpha}'(0)=z_{k,\alpha}(R)=0.&
\end{array}
\right.
\end{equation}
Regarding $\tilde g_k$, if we define $\tilde u_{1,\alpha}^{(k)}$ in a similar fashion, it is readily checked that 
$$\forall (r,\theta)\in [0,R]\times [0,2\pi]\, ,\tilde u_{1,\alpha}^{(k)}(r,\theta)=\tilde g_k(\theta) z_{k,\alpha}(r).$$
Therefore, any admissible perturbation $g$ writes
$$
g=\sum_{k=1}^\infty \left\{\gamma_kg_k+\beta_k\tilde g_k\right\}\quad \text{with } \gamma_0=0,
$$ 
and the solution $u_{1,\alpha}$ associated with $g$ writes
$$u_{1,\alpha}=\sum_{k=1}^\infty \left\{\gamma_k \uk+\beta_k \tilde u_{1,\alpha}^{(k)}\right\}.$$
Using the orthogonality properties of the family $\{g_k\}_{k\in \N^*}\cup \{\tilde g_k\}_{k\in \N}$, it follows that $\mathcal F_\alpha[V,V]$ given by \eqref{Eq:Lambda2} reads
\begin{eqnarray}
\mathcal F_\alpha[V,V] &=&\frac{r^*_0}2\sum_{k=1}^\infty\left(\sigma_\alpha z_{k,\alpha}'(r^*_0)|_{ext}\left\llbracket \frac{\partial u_{0,\alpha}}{\partial r}\right\rrbracket -\kappa z_{k,\alpha}|_{int}u_{0,\alpha}(r^*_0)\right)\left(\gamma_k^2+\beta_k^2\right)\nonumber
\\&&-\frac{r^*_0}2\sum_{k=0}^\infty \kappa u_{0,\alpha}(r^*_0)\frac{\partial u_{0,\alpha}}{\partial r}(r^*_0)\left(\gamma_k^2+\beta_k^2\right)-\sum_{k=1}^\infty2 \left\llbracket \sigma_\alpha |\n u_{0,\alpha}|^2\right\rrbracket \left(\gamma_k^2+\beta_k^2\right)\nonumber
\\&=&\frac{r^*_0\kappa u_{0,\alpha}(r^*_0)}2\sum_{k=1}^\infty\left(-\left.\frac{\partial u_{0,\alpha}}{\partial r}(r^*_0)\right|_{int}- z_{k,\alpha}|_{int}\right)\left(\gamma_k^2+\beta_k^2\right)\nonumber
\\&&+\sum_{k=1}^\infty \left(-2\left[\sigma_\alpha |\n u_{0,\alpha}|^2\right]+\sigma_\alpha z_{k,\alpha}'(r^*_0)|_{ext}\left\llbracket \frac{\partial u_{0,\alpha}}{\partial r}\right\rrbracket \right)\left(\gamma_k^2+\beta_k^2\right).\label{Eq:Ptn}
\end{eqnarray}
Define, for any $k\in \N$, 
$$
\omega_{k,\alpha}:=\frac{r_0^*\kappa u_{0,\alpha}(r_0^*)}{2}\left(\left.-\frac{\partial u_{0,\alpha}}{\partial r}(r^*_0)\right|_{int}- z_{k,\alpha}|_{int}(r_0^*)\right)$$ and $$ \zeta_{k,\alpha}:=-2\left\llbracket \sigma_\alpha |\n u_{0,\alpha}|^2\right\rrbracket +\sigma_\alpha z_{k,\alpha}'(r^*_0)|_{ext}\left\llbracket \frac{\partial u_{0,\alpha}}{\partial r}\right\rrbracket .
$$
Thus, 
$$
\mathcal F_\alpha[V,V]=\sum_{k=1}^\infty\left(\omega_{k,\alpha}+\zeta_{k,\alpha}\right)\left(\gamma_k^2+\beta_k^2\right).
$$
The end of the proof is devoted to proving the local shape minimality of the centered ball, which relies on an asymptotic analysis of the sequences $\{\omega_{k,\alpha}\}_{k\in \N}$ and $\{\zeta_{k,\alpha}\}_{k\in \N}$ as $\alpha$ converges to 0.
\begin{proposition}\label{Le:AAO}
There exists $C>0$ and $\overline \alpha>0$, there exists $M\in \R$ such that for any $\alpha \leq \overline \alpha$ and any $k\in \N$, one has
\begin{equation}\label{Eq:OmegaK}
\omega_{k,\alpha}\geq C>0,
\quad \text{and}\quad
\zeta_{k,\alpha}\geq - M\alpha.
\end{equation}
\end{proposition}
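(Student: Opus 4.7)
The plan is to establish the two bounds by transferring information from the benign case $\alpha = 0$ via the auxiliary ODE \eqref{Eq:PsiK} for $z_{k,\alpha}$. When $\alpha = 0$, the coefficient $\sigma_0 \equiv 1$ is constant, so $u_{0,0}$ is $\mathscr{C}^1$ across $\S$, whence $\llbracket \partial_r u_{0,0}\rrbracket(r_0^*) = 0$ and $\llbracket |\nabla u_{0,0}|^2\rrbracket(r_0^*) = 0$. Reading off the definition of $\zeta_{k,\alpha}$, both summands vanish at $\alpha = 0$, so $\zeta_{k,0} = 0$ for every $k$; the second bound will then follow from a first-order expansion in $\alpha$, uniform in $k$. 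Similarly, $\omega_{k,0} = \tfrac{r_0^*\kappa}{2}\,u_{0,0}(r_0^*)\bigl(-\partial_r u_{0,0}(r_0^*) - z_{k,0}(r_0^*)\bigr)$ reduces to estimating the boundary value $z_{k,0}(r_0^*)$.

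For the lower bound on $\omega_{k,\alpha}$, I would first work at $\alpha = 0$. Since $u_{0,0}$ is a positive radial Dirichlet eigenfunction decreasing on $[0,R]$, the prefactor $u_{0,0}(r_0^*)\bigl(-\partial_r u_{0,0}(r_0^*)\bigr)$ is a strictly positive constant, and it suffices to show a uniform bound of the form $z_{k,0}(r_0^*) \leq (1-\eta)\bigl(-\partial_r u_{0,0}(r_0^*)\bigr)$ for some $\eta \in (0,1)$ independent of $k$. At $\alpha = 0$, \eqref{Eq:PsiK} becomes a Bessel equation of integer order $k$: its solution is a multiple of $J_k\bigl(\sqrt{\lambda_{0,0}+\kappa}\,r\bigr)$ on $(0,r_0^*)$ (regularity at the origin), and a linear combination of the two independent Bessel or modified-Bessel solutions on $(r_0^*,R)$, the three free constants being fixed by the Dirichlet condition at $R$ and the two jump conditions. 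For large $k$, the centrifugal coercivity $k^2/r^2$ forces $z_{k,0}(r_0^*) \to 0$; for small $k$, the explicit Bessel representation combined with the sign of the jump $\llbracket z_{k,0}'\rrbracket(r_0^*) = -\kappa u_{0,0}(r_0^*) < 0$ yields the required estimate. A continuity argument based on \eqref{Eq:PsiK} and on the $\mathscr{C}^1$-convergence $\varphi_{\alpha,m_0^*} \to \varphi_{0,m_0^*}$ of Lemma~\ref{Le:Rad} then transfers this to $\omega_{k,\alpha} \geq C > 0$ for $\alpha$ sufficiently small.

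For $\zeta_{k,\alpha}$, I would expand around $\alpha = 0$. By Lemma~\ref{Le:Rad} and the jump conditions \eqref{Eq:U0}, both $\llbracket \partial_r u_{0,\alpha}\rrbracket(r_0^*)$ and $\llbracket \sigma_\alpha |\nabla u_{0,\alpha}|^2\rrbracket(r_0^*)$ are $\mathrm{O}(\alpha)$, with constants depending only on the radial profile $\varphi_{0,m_0^*}$. The remaining ingredient is a uniform-in-$k$ bound on $\sigma_\alpha z_{k,\alpha}'|_{ext}(r_0^*)$, which I would extract from the jump identity $\llbracket \sigma_\alpha z_{k,\alpha}'\rrbracket(r_0^*) = -\kappa u_{0,\alpha}(r_0^*)$ together with a uniform bound on $z_{k,\alpha}'|_{int}(r_0^*)$ obtained by testing \eqref{Eq:PsiK} against $z_{k,\alpha}$ and exploiting the coercivity provided by the $k^2/r^2$ term.

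The main obstacle will be the \emph{uniformity in $k$} of every estimate: without it, $\mathcal{F}_\alpha[V,V] = \sum_{k=1}^\infty (\omega_{k,\alpha}+\zeta_{k,\alpha})(\gamma_k^2+\beta_k^2)$ cannot be controlled. This requires treating separately a small-$k$ regime (handled via explicit Bessel identities and direct sign analysis) and a large-$k$ regime (handled via the centrifugal coercivity of \eqref{Eq:PsiK}); matching these two regimes is the technical heart of the argument.
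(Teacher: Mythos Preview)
Your overall architecture is sound: treat $\omega_{k,\alpha}$ and $\zeta_{k,\alpha}$ separately, perturb from $\alpha=0$, and split into small/large $k$ regimes. For $\zeta_{k,\alpha}$ this matches the paper almost exactly. But two of your key steps are underspecified in ways that matter, and you miss the main structural idea the paper uses for $\omega_{k,\alpha}$.

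\textbf{The gap for $\omega_{k,\alpha}$.} You propose, for small $k$, that ``the explicit Bessel representation combined with the sign of the jump $\llbracket z_{k,0}'\rrbracket(r_0^*)<0$ yields the required estimate'' $z_{k,0}(r_0^*)\leq(1-\eta)\bigl(-\partial_r u_{0,0}(r_0^*)\bigr)$. This does not follow: the jump in the derivative says nothing directly about the size of $z_{k,0}(r_0^*)$ relative to the fixed number $-\partial_r u_{0,0}(r_0^*)$. The paper bypasses this entirely via a \emph{comparison principle}: one shows $z_{k,\alpha}\leq z_{1,\alpha}$ on $(0,R)$ for every $k\geq1$ (the difference $z_{1,\alpha}-z_{k,\alpha}$ is a supersolution of a problem with zero jumps, once $z_{1,\alpha}\geq0$ is established by a separate maximum-principle argument). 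This monotonicity gives $\omega_{k,\alpha}\geq\omega_{1,\alpha}$ instantly and uniformly, reducing everything to the single case $k=1$. For that case the paper shows $\Psi:=-\partial_r u_{0,\alpha}-z_{1,\alpha}$ solves an ODE of the same type with positive boundary value at $R$, hence $\Psi>0$ on $(0,R]$ by the maximum principle. No Bessel asymptotics or small/large-$k$ split is needed for $\omega_{k,\alpha}$ at all.

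\textbf{The gap for $\zeta_{k,\alpha}$.} Your reduction to a uniform bound on $\sigma_\alpha z_{k,\alpha}'\vert_{ext}(r_0^*)$ is correct, and you correctly note that only an upper bound is needed (since $\llbracket\partial_r u_{0,\alpha}\rrbracket<0$). But ``testing \eqref{Eq:PsiK} against $z_{k,\alpha}$ and exploiting the coercivity of $k^2/r^2$'' yields an energy identity coupling $z_{k,\alpha}'\vert_{int}(r_0^*)$ and $z_{k,\alpha}\vert_{int}(r_0^*)$ multiplicatively; it is not clear how to decouple these to extract a \emph{pointwise} bound on the derivative uniform in $k$. The paper instead argues by sign: for $k\geq N$ large enough that $\lambda_{0,\alpha}+m_0^*-k^2/r^2<0$ on $(0,R)$, a maximum-principle contradiction shows $z_{k,\alpha}'\vert_{ext}(r_0^*)<0$, so the term contributes with the good sign. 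For the finitely many $k<N$, the explicit Bessel representation (now for finitely many indices, with $\alpha$-continuity of the coefficient matrix) gives a uniform bound. This is cleaner than the energy route and actually closes.

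In short: the perturbative framework and the small/large-$k$ dichotomy for $\zeta_{k,\alpha}$ are right, but the paper's proof leans throughout on maximum-principle and comparison arguments (positivity of $z_{1,\alpha}$, monotonicity $z_{k,\alpha}\leq z_{1,\alpha}$, positivity of $-\partial_r u_{0,\alpha}-z_{1,\alpha}$, sign of $z_{k,\alpha}'\vert_{ext}$ for large $k$) rather than on Bessel asymptotics or energy estimates. Your proposal would need substantial additional work at the two points flagged above to become a complete proof.
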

The last claim of Theorem \ref{Th:ShapeStability} is then an easy consequence of this proposition. The rest of the proof is devoted to the proof of Proposition~\ref{Le:AAO}, which follows from the combination of the following series of lemmas.

\begin{lemma}\label{Le:Positivity}
There exists $\overline \alpha>0$ such that, for every $\alpha\in [0, \overline \alpha]$, $z_{1,\alpha}$ is nonnegative on $(0,R)$.
\end{lemma}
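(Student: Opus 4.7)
The plan is to establish the lemma by a continuity argument in $\alpha$, reducing everything to the strict positivity of $z_{1,0}$ on $(0,R)$ at $\alpha = 0$. Indeed, Lemma~\ref{Le:Rad} provides $\mathscr{C}^1$-convergence of $u_{0,\alpha}$ to $u_{0,0}$, which makes the coefficients, the endpoint data and the jump data in the piecewise ODE~\eqref{Eq:PsiK} defining $z_{1,\alpha}$ depend continuously on $\alpha$ at $\alpha = 0$. A standard ODE stability argument on each of the two pieces $(0, r_0^*)$ and $(r_0^*, R)$ would then upgrade strict positivity of $z_{1,0}$ (together with nonzero endpoint derivatives) to $z_{1,\alpha} \geq 0$ on $(0, R)$ for every sufficiently small $\alpha$.

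For the case $\alpha = 0$, I would lift the one-dimensional problem back to its two-dimensional origin. At $\alpha = 0$ the principal part of the operator carries no jump across $\S$, so $u_{0,0}$ is $\mathscr{C}^1$ across $\S$ and the second jump condition in \eqref{Eq:PsiK} reduces to $\llbracket z_{1,0}\rrbracket(r_0^*) = 0$. Setting $u_{1,0}(r\cos\theta, r\sin\theta) := z_{1,0}(r)\cos\theta$, one checks that $u_{1,0}$ satisfies, in the sense of distributions,
\[
-\Delta u_{1,0} - (\lambda_{0,0} + m_0^*) u_{1,0} = \kappa\, u_{0,0}(r_0^*)\,\cos\theta\, \delta_{\S} \text{ in } \mathbb{B}(0,R), \quad u_{1,0} = 0 \text{ on } \partial \mathbb{B}(0,R),
\]
where $\delta_\S$ is the surface Dirac measure carried by $\S$. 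The right-hand side is sign-indefinite on the whole disk, but restricting to the right half-disk $D^+ := \mathbb{B}(0,R)\cap\{x_1 > 0\}$ makes it nonnegative (since $\cos\theta > 0$ there), while on $\partial D^+$ the function $u_{1,0}$ vanishes: on the semicircular piece by the Dirichlet boundary condition, and on the diameter $\{x_1 = 0\}$ because $\cos(\pm\pi/2) = 0$. Moreover, the operator $-\Delta - m_0^* - \lambda_{0,0}$ on $D^+$ with Dirichlet boundary condition has strictly positive first eigenvalue, by strict monotonicity of Dirichlet eigenvalues under inclusion ($D^+ \subsetneq \mathbb{B}(0,R)$ while the first eigenvalue of $-\Delta - m_0^*$ on $\mathbb{B}(0,R)$ equals $\lambda_{0,0}$). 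The weak maximum principle then yields $u_{1,0} \geq 0$ on $D^+$, whence $z_{1,0} \geq 0$ on $(0, R)$, and the strong maximum principle together with Hopf's lemma upgrade this to $z_{1,0} > 0$ on $(0, R)$ with $z_{1,0}'(0) > 0$ and $z_{1,0}'(R) < 0$.

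The main obstacle is precisely the sign-indefiniteness of the distributional source in the two-dimensional lift: no straightforward maximum principle applies on the whole disk $\mathbb{B}(0, R)$. Overcoming this requires the restriction to the half-disk $D^+$, which simultaneously exploits the multiplicative structure $u_{1,0} = z_{1,0}(r)\cos\theta$ and the strict monotonicity of the first Dirichlet eigenvalue under domain inclusion. Once the $\alpha = 0$ analysis is in place, the perturbation step for small $\alpha > 0$ is mechanical and no further surprise is expected.
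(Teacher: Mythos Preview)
Your proposal is correct and takes a genuinely different route from the paper's own proof. The paper works entirely in one dimension: it introduces the quotient $p_\alpha := z_{1,\alpha}/u_{0,\alpha}$, derives the ODE
\[
-\sigma_\alpha p_\alpha'' - \frac{\sigma_\alpha}{r}p_\alpha' = -\frac{1}{r^2}p_\alpha - 2p_\alpha'\frac{u_{0,\alpha}'}{u_{0,\alpha}},
\]
and rules out a negative minimum of $p_\alpha$ pointwise (at an interior point $r_-\neq r_0^*$ the sign of $-p_\alpha''(r_-)$ gives an immediate contradiction; the cases $r_-=R$ and $r_-=r_0^*$ are handled via Hopf and a limit $\alpha\to 0$ respectively). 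By contrast, you lift the $\alpha=0$ problem back to the disk, restrict to the half-disk $D^+=\{x_1>0\}$ where the measure source $\kappa u_{0,0}(r_0^*)\cos\theta\,\delta_{\mathbb S^*}$ becomes nonnegative, and invoke strict domain monotonicity of the first Dirichlet eigenvalue of $-\Delta - m_0^*$ to secure the maximum principle for $-\Delta - m_0^* - \lambda_{0,0}$ on $D^+$; positivity of $z_{1,0}$ on $(0,R)$ together with $z_{1,0}'(0)>0$, $z_{1,0}'(R)<0$ then feeds a $C^1$-perturbation argument for small $\alpha$.

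Your half-disk argument is clean and makes transparent why the first Fourier mode is the critical one (it is exactly the mode that produces a sign-definite source after restriction). The paper's quotient trick, on the other hand, avoids the two-step structure (establish $\alpha=0$, then perturb): the 1D maximum principle argument applies directly for every small $\alpha$, except that the jump point $r_0^*$ still forces a passage to the limit. One caution on your perturbation step: the ODE~\eqref{Eq:PsiK} is singular at $r=0$, so the ``standard ODE stability'' you invoke should be justified either through the explicit Bessel representation (the paper later writes $z_{k,\alpha}=A_{k,\alpha}J_k\bigl(\sqrt{(\lambda_{0,\alpha}+\kappa)/(1+\alpha\kappa)}\,r/R\bigr)$ on $(0,r_0^*)$, which makes $C^1$-continuity in $\alpha$ on $[0,r_0^*]$ immediate) or through a Frobenius-type argument near $r=0$. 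With that detail supplied, your argument is complete.
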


\begin{proof}[Proof of Lemma~\ref{Le:Positivity}]
For the sake of notational simplicity, we temporarily drop the dependence on $\alpha$ and denote $z_{1,\alpha}$ by $z_\alpha$. The function $z_\alpha$ solves the ODE
$$
\left\{
\begin{array}{ll}
-\sigma_\alpha z_\alpha''-\frac{\sigma_{\alpha}}rz_\alpha'(r) =\left(\lambda_{0,\alpha}-\frac{1}{r^2}\right)z_\alpha+m^*z_\alpha&\text{ in }(0,r_0^*)\cup (r_0^*,R),
\\\left\llbracket \sigma_\alpha z_\alpha'\right\rrbracket (r^*_0)=-\kappa u_{0,\alpha}(r^*_0)&
\\\left\llbracket z_\alpha\right\rrbracket (r^*_0)=-\left\llbracket \frac{\partial u_{0,\alpha}}{\partial r}\right\rrbracket (r^*_0),
\\z_{\alpha}(R)=0.&
\end{array}
\right.
$$
Let us introduce  $p_\alpha=z_\alpha/u_{0,\alpha}$. One checks easily that $p_\alpha$ solves the ODE
$$
-\sigma_\alpha p_\alpha''-\frac{\sigma_{\alpha}}rp_\alpha' =-\frac{1}{r^2}p_\alpha-2p_\alpha'\frac{u_{0,\alpha}'}{u_{0,\alpha}}\quad \text{in }(0,R).
$$ 
Furthermore, $p_\alpha$ satisfies the jump conditions
$$\llbracket p_\alpha\rrbracket (r^*_0)=-\frac{\left\llbracket \partial_r u_{0,\alpha}\right\rrbracket (r^*_0)}{u_{0,\alpha}(r^*_0)} =\frac{-\alpha \kappa \partial_r u_{0,\alpha}|_{int}}{u_{0,\alpha}(r^*_0)}>0
\quad \text{and}\quad
\llbracket \sigma_\alpha p_\alpha'\rrbracket (r^*_0)=-\kappa+\frac{\sigma_\alpha \partial_r u_{0,\alpha}}{u_{0,\alpha}(r^*_0)^2}\left\llbracket\frac{\partial u_{0,\alpha}}{\partial r}\right\rrbracket.$$
To show that $z_\alpha$ is nonnegative, we argue by contradiction and consider first the case where a negative minimum is reached at an interior point $r_-\neq r^*_0$. Then, $p_\alpha$ is $\mathscr C^2$ in a neighborhood of $r_-$ and we have 
$$0\geq  -p_\alpha''(r_-)=-\frac{p_\alpha(r_-)}{\sigma_\alpha r_-^2}>0,$$
whence the  contradiction. 

To exclude the case $r_-=R$, let us notice that, according to L'Hospital's rule, one has $p_\alpha(R)=z_\alpha'(R)/u_{0,\alpha}'(R)$. According to the Hopf lemma applied to $u_{0,\alpha}$, this quotient is well-defined. If $p_\alpha(R)<0$ then it follows that $z_\alpha'(R)>0$. However, one has $p_\alpha'(r)\sim z_\alpha'(r)/(2u_{0,\alpha}(r))>0$ as $r\to R$, which contradicts the fact that a minimum is reached at $R$.

Let us finally exclude the case where $r_-=r^*_0$. Mimicking the elliptic regularity arguments used in the proofs of Lemmas \ref{Le:Bounds} and  \ref{Le:ConvergenceFP}, we get that $p_\alpha$ converges to $p_0$ as $\alpha \to 0$ for the strong topologies of $\mathscr C^0([0,r^*_0])$ and $\mathscr C^0([r^*_0,R])$.

To conclude, it suffices hence to prove that $p_0$ is positive in a neighborhood of $r^*_0$. We once again argue by contradiction and assume that $p_0$ reaches a negative minimum at $r_-\in [0,R]$. 
Notice that $r_-\neq r_0^*$ since $\llbracket p_0\rrbracket (r^*_0)=0$ and $\llbracket p_0'\rrbracket (r^*_0)=-\kappa<0$. 

If $r_-\in (0,R)$, since $r_-\neq r_0^*$, we claim that $p_0$ is $\mathscr C^2$ in a neighborhood of $r_-$ and, if $p_0(r_-)<0$, the contradiction follows from 
$$0\geq -p_0''(r_-)=-\frac{p_0(r_-)}{(r_-)^2}>0.$$ For the same reason, a negative minimum cannot be reached at $r=0$.

If $r_-=R$, we observe that $p_0(R)=z_0'(R)/u_{0,0}'(R)$.
According to the Hopf lemma applied to $u_{0,0}$, this quantity is well-defined. If $p_0(R)<0$, then it follows that $z_0'(R)>0$. However, $p_0'(r)\sim z_0'(r)/(2u(r))>0$ as $r\to R$, which contradicts the fact that $R$ is a minimizer.

Therefore $p_0$ is positive in a neighborhood of $r^*_0$ and we infer that $p_\alpha$ is non-negative, so that, in turn, $z_\alpha \geq 0$ in $[0,R]$.
\end{proof}

\begin{lemma}\label{Cl:Comp}
Let $\overline \alpha$ be defined as in Lemma \ref{Le:Positivity}. Then, for every $ \alpha\in [0, \overline \alpha]$ and every $k\in \N$, 
\begin{equation}\label{Eq:CompPs}z_{k,\alpha}\leq z_{1,\alpha}.\end{equation}
As a consequence, for any $\alpha \leq \overline \alpha$ and any $k\in \N$, there holds $\omega_{k,\alpha}\geq \omega_{1,\alpha}$.
\end{lemma}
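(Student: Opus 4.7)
Define $w_k := z_{1,\alpha} - z_{k,\alpha}$ for $k \geq 2$. The key observation is that the jump conditions in \eqref{Eq:PsiK} depend only on $u_{0,\alpha}$ and not on $k$, so they coincide for the indices $1$ and $k$; therefore $\llbracket w_k\rrbracket(r_0^*) = \llbracket \sigma_\alpha w_k'\rrbracket(r_0^*) = 0$, i.e.\ $w_k$ and $\sigma_\alpha w_k'$ are continuous across the interface. Subtracting the two ODEs of \eqref{Eq:PsiK} yields
\begin{equation*}
-\sigma_\alpha w_k'' - \frac{\sigma_\alpha}{r}\, w_k' + \Big(\frac{1}{r^2} - \lambda_{0,\alpha} - m_0^*\Big) w_k = \frac{k^2-1}{r^2}\, z_{k,\alpha}
\end{equation*}
on $(0,r_0^*)\cup(r_0^*,R)$, with $w_k'(0) = w_k(R) = 0$. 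The inequality $z_{k,\alpha} \leq z_{1,\alpha}$ is equivalent to $w_k \geq 0$, and once established, the stated bound on $\omega_{k,\alpha}$ follows at once by comparing the explicit formulas.

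\textbf{Positivity of $z_{k,\alpha}$.} First I would extend Lemma~\ref{Le:Positivity} to all $k \geq 1$. Setting $p_{k,\alpha} := z_{k,\alpha}/u_{0,\alpha}$ and proceeding exactly as in the proof of Lemma~\ref{Le:Positivity}, a direct computation shows
\begin{equation*}
-\sigma_\alpha p_{k,\alpha}'' - \sigma_\alpha\Big(\frac{1}{r} + 2 \frac{u_{0,\alpha}'}{u_{0,\alpha}}\Big) p_{k,\alpha}' + \frac{k^2}{r^2} p_{k,\alpha} = 0,
\end{equation*}
whose strictly positive zeroth-order coefficient $k^2/r^2$ makes the maximum principle even more favorable than in the $k=1$ case. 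The boundary analysis at $r=0,R$ (Hopf's lemma and L'Hopital's rule) and the interface analysis at $r_0^*$ (via passage to the limit $\alpha \to 0$) carry over verbatim, yielding $z_{k,\alpha} \geq 0$. In particular, the right-hand side $\frac{k^2-1}{r^2} z_{k,\alpha}$ of the equation for $w_k$ is nonnegative.

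\textbf{Nonnegativity of $w_k$.} The main step is to introduce the ratio $q_k := w_k / z_{1,\alpha}$, which is well-defined thanks to the strict positivity of $z_{1,\alpha}$ established in Lemma~\ref{Le:Positivity}. Writing the $w_k$-equation in the form $L_1 w_k = \frac{k^2-1}{r^2} z_{k,\alpha}$, using $L_1 z_{1,\alpha} = 0$, and substituting $z_{k,\alpha}/z_{1,\alpha} = 1 - q_k$, a straightforward calculation yields
\begin{equation*}
-\sigma_\alpha q_k'' - \sigma_\alpha\Big(\frac{1}{r} + 2 \frac{z_{1,\alpha}'}{z_{1,\alpha}}\Big) q_k' + \frac{k^2-1}{r^2}\, q_k = \frac{k^2-1}{r^2}
\end{equation*}
on $(0,r_0^*)\cup(r_0^*,R)$. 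Both the zeroth-order coefficient and the source are strictly positive since $k \geq 2$, and the maximum principle then rules out any interior strict negative minimum at a regular point $r_-\neq r_0^*$: the relations $q_k'(r_-) = 0$ and $q_k''(r_-)\geq 0$ force the left-hand side to be $\leq \frac{k^2-1}{r_-^2} q_k(r_-) < 0$, in contradiction with the strictly positive right-hand side. Boundary candidates $r=0$ and $r=R$ are excluded by L'Hopital's rule and Hopf's lemma, exactly as in Lemma~\ref{Le:Positivity}. For a putative negative minimum of $w_k$ at $r_0^*$, the continuity of $w_k$ and $\sigma_\alpha w_k'$ forces $w_k'(r_0^{*\pm}) = 0$, so the analysis on each side of the interface reduces to the regular interior case and yields the same contradiction. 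Hence $w_k \geq 0$ on $[0,R]$.

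\textbf{Main obstacle and conclusion on $\omega_{k,\alpha}$.} The chief difficulty is that the direct equation for $w_k$ is not coercive, its zeroth-order coefficient $1/r^2 - \lambda_{0,\alpha} - m_0^*$ changing sign over $(0,R)$. The ratio substitution $q_k = w_k/z_{1,\alpha}$ is precisely what reorganizes the equation into a coercive form with a clean positive zeroth-order term $(k^2-1)/r^2$, at the cost of introducing an interface jump of $q_k$ at $r_0^*$; this last issue is neutralized by arguing directly on $w_k$ at the interface, using its $C^1$-regularity. Once $z_{k,\alpha}\leq z_{1,\alpha}$ is proved pointwise, in particular $z_{k,\alpha}|_{int}(r_0^*) \leq z_{1,\alpha}|_{int}(r_0^*)$, and the monotonicity of
$$\omega_{k,\alpha} = \frac{r_0^* \kappa\, u_{0,\alpha}(r_0^*)}{2}\Big(-\partial_r u_{0,\alpha}|_{int}(r_0^*) - z_{k,\alpha}|_{int}(r_0^*)\Big)$$
in its only $k$-dependent entry (with the strictly positive prefactor $r_0^* \kappa u_{0,\alpha}(r_0^*)/2$) gives $\omega_{k,\alpha}\geq\omega_{1,\alpha}$.
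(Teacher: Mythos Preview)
Your route is essentially sound but substantially more involved than the paper's. The paper defines the same difference $\Psi_k=z_{1,\alpha}-z_{k,\alpha}$ and, using only $z_{1,\alpha}\geq 0$ from Lemma~\ref{Le:Positivity}, replaces $-\frac{1}{r^2}z_{1,\alpha}$ by $-\frac{k^2}{r^2}z_{1,\alpha}$ to obtain the differential \emph{inequality}
\[
-\sigma_\alpha\Psi_k''-\frac{\sigma_\alpha}{r}\Psi_k'\;\geq\;\Big(\lambda_{0,\alpha}-\frac{k^2}{r^2}+m_0^*\Big)\Psi_k ,
\]
i.e.\ $L_k\Psi_k\geq 0$ for the mode-$k$ radial operator. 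Since $\Psi_k$ has no jumps and Dirichlet data, and since the principal eigenvalue of $L_k$ is $\mu_k-\lambda_{0,\alpha}>0$ (the mode-$k$ eigenvalues of $-\nabla\!\cdot(\sigma_\alpha\nabla)-m_0^*$ lie strictly above the radial one for $k\geq 1$), the maximum principle for $L_k$ gives $\Psi_k\geq 0$ directly. No positivity of $z_{k,\alpha}$, no ratio $q_k$, no separate interface analysis is needed.

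By contrast, your argument (i) first proves $z_{k,\alpha}\geq 0$ for all $k$, a step that is never used afterwards since the $q_k$-equation has right-hand side $\frac{k^2-1}{r^2}$ independently of the sign of $z_{k,\alpha}$; (ii) requires \emph{strict} positivity of $z_{1,\alpha}$ on $(0,R)$ to form $q_k=w_k/z_{1,\alpha}$, whereas Lemma~\ref{Le:Positivity} only asserts nonnegativity (this can be upgraded via ODE uniqueness and the nontrivial jump data, but you should say so); and (iii) handles the interface by passing back from $q_k$ to $w_k$, which is a little delicate because $q_k$ itself jumps at $r_0^*$. What your approach buys is that it avoids invoking the spectral gap $\mu_k>\lambda_{0,\alpha}$: the coercive zeroth-order term $\frac{k^2-1}{r^2}$ in the $q_k$-equation makes the maximum principle entirely elementary. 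The paper's approach trades that for a one-line spectral observation and is considerably shorter.
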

\begin{proof}[Proof of Lemma \ref{Cl:Comp}]
Since $\omega_{k,\alpha}-\omega_{1,\alpha}=\frac{r_0^*\kappa u_{0,\alpha}(r_0^*)}{2}\left(- z_{k,\alpha}|_{int}(r^*_0)+z_{1,\alpha}|_{int}(r^*_0)\right)$, and since we further have $\frac{r_0^*\kappa u_{0,\alpha}(r_0^*)}{2}>0$, 
the fact that $\omega_{k,\alpha}\geq \omega_{1,\alpha}$ will follow from \eqref{Eq:CompPs}, on which we now focus. Let us set $\Psi_k=z_{1,\alpha}-z_{k,\alpha}$. From the jump conditions on $z_{1,\alpha}$ and $z_{k,\alpha}$, one has $\llbracket \Psi_k\rrbracket (r_0)=\llbracket \sigma_\alpha\Psi_k'\rrbracket (r_0)=0$.
The function $\Psi_k$ satisfies
\begin{eqnarray*}
-\sigma_\alpha\Psi_k''-\sigma_\alpha\frac{\Psi_k'}r&=&-\left(\lambda_{0,\alpha}-\frac{k^2}{r^2}\right)z_{k,\alpha}-m^*_0z_{k,\alpha}+\left(\lambda_{0,\alpha}-\frac1{r^2}\right)z_{1,\alpha}+m^*_0\psi_{1,\alpha}
\\&>&\left(\lambda_{0,\alpha}-\frac{k^2}{r^2}\right)z_{k,\alpha}-m^*_0z_{k,\alpha}+\left(\lambda_{0,\alpha}-\frac{k^2}{r^2}\right)z_{1,\alpha}+m^*_0\psi_{1,\alpha}
\\&>&\left(\lambda_{0,\alpha}-\frac{k^2}{r^2}\right)\Psi_{k}+m^*_0\Psi_{k}.
\end{eqnarray*}
since $z_{1,\alpha}\geq0$, according to Lemma~\ref{Le:Positivity}. Since $\Psi_k$ satisfies Dirichlet boundary conditions, $\Psi_k\geq0$ in $(0,R)$. 
\end{proof}

\begin{lemma}\label{Cl:Final}
There exists $C>0$ such that, for every $\alpha \in [0, \overline \alpha]$, where $\overline \alpha$ is introduced on Lemma~\eqref{Le:Positivity}, one has $\omega_{1,\alpha}\geq C$.
\end{lemma}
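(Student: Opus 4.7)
The plan is to establish the strict positivity of $\omega_{1,0}$ at $\alpha=0$ and then conclude by continuity in $\alpha$. Setting $u_0:=u_{0,0}$, $z_0:=z_{1,0}$ and $\lambda_0:=\lambda_{0,0}$, note that $\sigma_0\equiv 1$ and $\llbracket u_0'\rrbracket(r_0^*)=0$, so that
\[
\omega_{1,0} \;=\; \frac{r_0^*\kappa\,u_0(r_0^*)}{2}\bigl(-u_0'(r_0^*)-z_0(r_0^*)\bigr).
\]
Since $u_0(r_0^*)>0$ by positivity of the principal eigenfunction, everything reduces to proving $w(r_0^*)<0$ for $w:=u_0'+z_0$.

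The central observation is that $w$ is of class $\mathscr{C}^1$ across $r_0^*$ and satisfies on $(0,r_0^*)\cup(r_0^*,R)$ the ODE
\[
-w''-\frac{w'}{r}+\frac{w}{r^2} \;=\; (\lambda_0+m_0^*)\,w,
\]
which is exactly the $\alpha=0$ equation for $z_0$ and also the equation obtained by differentiating $-u_0''-u_0'/r=(\lambda_0+m_0^*)u_0$ in $r$. The jump conditions cancel exactly: differentiating the radial equation for $u_0$ gives $\llbracket u_0''\rrbracket(r_0^*)=\kappa\,u_0(r_0^*)$, while \eqref{Eq:PsiK} at $\alpha=0$ gives $\llbracket z_0\rrbracket(r_0^*)=0$ and $\llbracket z_0'\rrbracket(r_0^*)=-\kappa\,u_0(r_0^*)$. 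Moreover $w(0)=0$ (by radial smoothness of $u_0$ at the origin, and because smoothness of the first Fourier mode $u_{1,\alpha}^{(1)}=\cos\theta\,z_{1,\alpha}(r)$ at the origin forces $z_0(0)=0$), while $w(R)=u_0'(R)<0$ by Hopf.

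The main obstacle is to rule out interior zeros of $w$. Define $\tilde u(r,\theta):=w(r)\cos\theta$, which is $\mathscr{C}^1$ across $\mathbb{S}(0,r_0^*)$ and satisfies $-\Delta\tilde u-m_0^*\tilde u=\lambda_0\tilde u$ weakly on $\mathbb{B}(0,R)$. If $w(r^*)=0$ for some $r^*\in(0,R)$, then $\tilde u|_{\mathbb{B}(0,r^*)}$ would be a nontrivial, sign-changing Dirichlet eigenfunction of $-\Delta-m_0^*\mathds{1}_{\mathbb{B}^*}$ on $\mathbb{B}(0,r^*)$ with eigenvalue $\lambda_0$; by orthogonality to the positive first Dirichlet eigenfunction on $\mathbb{B}(0,r^*)$, $\lambda_0$ would be at least the second Dirichlet eigenvalue on $\mathbb{B}(0,r^*)$, which by domain monotonicity strictly exceeds the first Dirichlet eigenvalue on $\mathbb{B}(0,R)$, namely $\lambda_0$ -- a contradiction. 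Hence $w<0$ on $(0,R]$ and $\omega_{1,0}>0$. To conclude, Lemma~\ref{Le:Rad} gives $u_{0,\alpha}\to u_0$ in $\mathscr{C}^1$, and a parallel ODE perturbation argument applied to \eqref{Eq:PsiK} yields $z_{1,\alpha}\to z_0$ uniformly on each of $[0,r_0^*]$ and $[r_0^*,R]$; therefore $\omega_{1,\alpha}\to\omega_{1,0}>0$ as $\alpha\to 0$, and after possibly shrinking $\overline\alpha$ one may take $C:=\omega_{1,0}/2$.
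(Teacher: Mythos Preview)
Your proof is correct and follows essentially the same approach as the paper: both introduce the auxiliary function $\Psi=-\partial_r u_{0,\alpha}-z_{1,\alpha}$ (your $-w$ at $\alpha=0$), check that it satisfies the $k=1$ radial ODE with vanishing jumps and boundary values $\Psi(0)=0$, $\Psi(R)>0$, deduce strict positivity on $(0,R]$, and conclude via uniform convergence in $\alpha$. The one substantive addition is that you supply an explicit argument---viewing $w(r)\cos\theta$ as a Dirichlet eigenfunction on a sub-ball and invoking domain monotonicity---for the absence of interior zeros, a step the paper simply asserts from the ODE.
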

\begin{proof}[Proof of Lemma \ref{Cl:Final}]
Let us introduce $\Psi=-{\partial u_{0,\alpha}}/{\partial r}- z_{1,\alpha}$. Since
$$\omega_{1,\alpha}=\frac{r_0^*\kappa u_{0,\alpha}(r_0^*)}{2}\Psi(r_0^*)$$ and since $\frac{r_0^*\kappa u_{0,\alpha}(r_0^*)}{2}$ converges, as $\alpha \to0$, to $\frac{r_0^*\kappa u_{0,0}(r_0^*)}{2}>0$, it suffices to prove that $\Psi(r_0^*)\geq C>0$ for some $C$ when $\alpha \to 0$.  
According to \eqref{Eq:RadU0}, we have $\llbracket \Psi\rrbracket (r^*_0)=\llbracket \Psi'\rrbracket (r^*_0)=0$. Furthermore, 
$\Psi(R)=-\frac{\partial u_{0,\alpha}}{\partial r}(R)>0$ according to the Hopf Lemma and $\Psi(0)=0$. Finally, since $\Psi$ solves the ODE, one has
$$
-\frac1r(\sigma_\alpha \Psi')'=\left(\lambda_{0,\alpha}-\frac1{r^2}\right)\Psi+m^*\Psi\quad \text{in }(0,R),
$$
it follows that $\Psi$ is positive in $(0,R]$. Furthermore, $\Psi$ converges to $\Psi_0$ for the strong topology of $\mathscr C^0([0,R])$ and $\Psi_0$ solves the ODE
$$
\left\{\begin{array}{ll}
-\frac1r( \Psi_0')'=\left(\lambda_{0,0}-\frac1{r^2}\right)\Psi_0+m^*_0\Psi_0 & \text{in }(0,R)\\
\Psi_0(R)=-\frac{\partial u_{0,0}}{\partial r}(R)>0. & 
\end{array}\right.
$$ 
Hence there exists $C>0$ such that, for every $\alpha \in [0, \overline \alpha]$, one has $\Psi(r^*_0)\geq C>0$.
\end{proof}

It remains to prove the second inequality of\eqref{Eq:OmegaK}. 
As a consequence of the convergence result stated in Lemma~\ref{Le:Rad}, one has
\begin{equation}\label{bus:1904}
\left\llbracket \sigma_\alpha |\n u_{0,\alpha}|^2\right\rrbracket =\operatorname{O}(\alpha), \quad \left\llbracket\frac{\partial u_{0,\alpha}}{\partial r}\right\rrbracket=\left.\alpha \kappa \frac{\partial u_{0,\alpha}}{\partial r}\right|_{int}<0.
\end{equation}
It follows that we only need to prove that there exists a constant $M>0$ such that, for any $\alpha\in [0, \overline \alpha]$, and any $k\in \N^*$, 
\begin{equation}\label{Eq:Tain3}
M\geq \sigma_\alpha z_{k,\alpha}'|_{ext}(r^*_0)
\end{equation}
so that
$$
\zeta_{k,\alpha}=\operatorname{O}(\alpha)+\sigma_\alpha z_{k,\alpha}'\vert_{ext}(r^*_0)\left.\alpha \kappa \frac{\partial u_{0,\alpha}}{\partial r}(r^*_0)\right|_{int}\geq \operatorname{O}(\alpha)-M\alpha\kappa\left|\left.\frac{\partial u_{0,\alpha}}{\partial r}(r^*_0)\right|_{int}\right|
$$ 

To show the estimate \eqref{Eq:Tain3}, let us distinguish between small and large values of $k$.
To this aim, we introduce $N\in \N$ as he smallest integer such that 
\begin{equation}\label{tain}
\lambda_{0,\alpha}+m^*_0-\frac{k^2}{r^2}<0 \text{ in }(0,R)
\end{equation} 
for every $k\geq N$ and $\alpha \in [0, \overline \alpha]$. The existence of such an integer follows immediately from the convergence of $(\lambda_{0,\alpha})_{\alpha>0}$ to $\lambda_0(m^*_0)$ as $\alpha\to 0$. 

\medskip

First, we will prove that, for every $k\geq N$,
\begin{equation}\label{Eq:DerPos}
z_{k,\alpha}'(r^*_0)|_{ext}<0
\end{equation} 
and that there exists $M>0$ such that, for every $k\leq N$, 
\begin{equation}\label{Eq:DerPos2}
|z_{k,\alpha}'(r^*_0)|_{ext}|\leq M
\end{equation} 
which will lead to \eqref{Eq:Tain3} and thus yield the desired conclusion. 

To show \eqref{Eq:DerPos}, let us argue by contradiction, assuming that $z_{k,\alpha}'(r^*_0)|_{ext}>0$.
Since the jump $\llbracket \sigma_\alpha z_{k,\alpha}'\rrbracket =-\kappa u_{0,\alpha}(r^*_0)$ is negative, it follows that 
$$
(1+\alpha \kappa)z_{k,\alpha}'(r^*_0)|_{int}=z_{k,\alpha}'(r^*_0)|_{ext}-\llbracket \sigma_\alpha z_{k,\alpha}'\rrbracket >0.
$$
By mimicking the reasonings in the proof of Lemma~\ref{Le:Positivity}, $z_{k,\alpha}$ cannot reach a negative minimum on $(0,r^*_0)$ since \eqref{tain} holds true. Therefore, since $z_{k,\alpha}(0)=0$ and $z_{k,\alpha}'(r^*_0)|_{int}>0$, one has necessarily $z_{k,\alpha}(r^*_0)|_{int}>0$, which in turn gives 
$z_{k,\alpha}(r^*_0)|_{ext}>0$ since $\llbracket z_{k,\alpha}\rrbracket =-\alpha \kappa \frac{\partial u_{0,\alpha}}{\partial r}>0$.
 
Furthermore, $z_{k,\alpha}(R)=0$. Since $z_{k,\alpha}(r^*_0)|_{ext}>0$ and $z_{k,\alpha}'(r^*_0)|_{ext}>0$, it follows that $z_{k,\alpha}$ reaches a positive maximum at some interior point $r_1$, satisfying hence
$$
0\leq -z_{k,\alpha}''(r_-)=\left(\lambda_{0,\alpha}+m^*-\frac{k^2}{r^2}\right)z_{k,\alpha}(r_-)<0,
$$ 
leading to a contradiction.

\medskip

Let us now deal with small values of $k$, by assuming $k\leq N$. We will prove that \eqref{Eq:DerPos2} holds true. 
To this aim, we will compute $z_{k,\alpha}$. Let $J_k$ (resp. $Y_k$) be the $k$-th Bessel function of the first (resp. the second) kind. One has 
$$z_{k,\alpha}(r)=\left\{\begin{array}{ll}
A_{k,\alpha}J_k(\sqrt{\frac{\lambda_{0,\alpha}+\kappa}{1+\alpha\kappa}} \frac{r}R) & \text{ if }r\leq r^*_0, \\
B_{k,\alpha}J_k(\sqrt{\lambda_{0,\alpha}}\frac{r}R)+C_{k,\alpha} Y_k(\sqrt{\lambda_{0,\alpha}}\frac{r}R) & \text{ if }r^*_0\leq r\leq R,
\end{array}\right.$$
where $X_{k,\alpha}=(B_{k,\alpha},C_{k,\alpha},A_{k,\alpha})$ solves the linear system
$$
\mathcal A_{k,\alpha}X_{k,\alpha}=b_\alpha
$$
where
$$
b_\alpha=\begin{pmatrix}0\\-\kappa u_{0,\alpha}(r^*_0)\\-\left[\partial_R u_{0,\alpha}\right]\\\end{pmatrix}
$$
and
$$
\mathcal A_{k,\alpha}=\begin{pmatrix}
J_k\left(\sqrt{\lambda_0}\right)&Y_k\left(\sqrt{\lambda_0}\right)&0
\\\sqrt{\lambda_{0,\alpha}}J_k'(\sqrt{\lambda_{0,\alpha}+\kappa}\r)&\sqrt{\lambda_{0,\alpha}}Y_k'(\sqrt{\lambda_{0,\alpha}+\kappa}\r)&-\sqrt{\frac{\lambda_{0,\alpha}+\kappa}{1+\alpha\kappa}}J_k'(\sqrt{\frac{\lambda_{0,\alpha}+\kappa}{1+\alpha\kappa}}\r)
\\J_k(\sqrt{\lambda_{0,\alpha}}\r)&Y_k(\sqrt{\lambda_{0,\alpha}}\r)&-J_k(\sqrt{\frac{\lambda_{0,\alpha}+\kappa}{1+\alpha\kappa}}\r)
\end{pmatrix}.
$$
It is easy to check that
$$
\Vert \mathcal A_{k,\alpha}-\mathcal A_{k,0}\Vert \leq M\alpha
$$ 
where $M$ only depends\footnote{Indeed, $\{J_k,Y_k\}_{k\leq N}$  are uniformly bounded in $\mathscr C^2([r_0^*/R-\e,R])$ for every $\e>0$ small enough. Since we consider a finite number of indices $k$, there exists $\delta>0$ (depending only on $N$) such that 
$$
\forall k\in \{0, \dots,N\}, \quad \det(\mathcal A_{k,\alpha})\geq \delta>0.
$$ 
Then, since $\Vert X_\alpha-X_0\Vert \leq M\alpha$, it follows from the Cramer formula that there exists $M$ (depending only on $N$) such that 
$\Vert 
X_{k,\alpha}-X_{k,0}\Vert_{L^\infty}\leq M\alpha.
$ 
} on $N$.
Hence it is enough to prove that 
$|\psi_{k,0}'(r^*_0)|\leq M$ for some $M>0$ depending only on $N$, which is straightforward since the set of indices is finite. The expected conclusion follows.

\subsection{Conclusion}
From Eq. \eqref{Eq:OmegaK} and Lemma \ref{Cl:Final}, there exists $C>0$ and $M>0$ such that $\omega_{k,\alpha}\geq C>0$ and $\zeta_{k,\alpha}\geq -M\alpha$ for every $\alpha\in [0, \overline \alpha]$ and $k\in \N$, from which we infer that 
\begin{align*}
\mathcal F_\alpha[V,V]&\geq \left(C-M\alpha\right)\sum_{k=1}^\infty \left(\gamma_k^2+\beta_k^2\right)\geq \frac{C}2 \Vert V\cdot \nu \Vert_{L^2}^2.
\end{align*}
according to Eq. \eqref{Eq:Ptn}. 

\subsection{Concluding remark: possible extension to higher dimensions}\label{Se:CclSha}
Let us briefly comment on possible extensions of this method to higher dimensions. Indeed, although we do not tackle this issue in this article, we believe that the coercivity norm obtained in Theorem~\ref{Th:ShapeStability} could also be obtained in the three-dimensional case. Nevertheless, we believe that such an extension would need tedious and technical computations.
Since our objective here was to introduce a methodology to investigate stability issues for the shape optimization problems we deal with, we slightly comment on this claim and explain how we believe that our proof can be adapted to the case $d=3$. 

Let $\O$ denote the ball $\mathbb B(0,R)$ in $\R^3$ and $\B^*$ be the centered three-dimensional ball $\mathbb B(0,R)$ of volume $m_0|\O|/\kappa$. Let us assume without loss of generality that $R=1$, so that $\partial \B^*$ is the euclidean unit sphere $\mathbb S^2$. 

As a preliminary result, one first has to show that the principal eigenfunction $u_{\alpha,m^*_0}$ is radially symmetric and that $\B^*$ is a critical shape by the same arguments as in the proof of Theorem~\ref{Th:ShapeStability}, which allows us to compute the Lagrange multiplier $\Lambda_\rho$ associated to the volume constraint. Let $\mathcal L_{\Lambda_\rho}$ be  the associated shape Lagrangian.

For an integer $k$, we define $H_k$ as the space of spherical harmonics of degree $k$ i.e as the eigenspace associated with the eigenvalue $-k(k+1)$ of the Laplace-Beltrami operator $\Delta_{\mathbb S^2}$. $H_k$ has  finite dimension $d_k$, and we furthermore have
$$L^2(\mathbb S^2)=\bigoplus_{k=1}^\infty H_k.$$
Let us consider a Hilbert basis $\{y_{k,\ell}\}_{\ell=1,\dots,d_k}$ of $H_k$. 

For an admissible vector field $V,$ one must then expand $ V\cdot \nu$ in the basis of spherical harmonics as 
\begin{equation}\label{eq:fin}
V\cdot \nu=\sum_{k=1}^\infty\sum_{\ell=1}^{d_k} \alpha_{k,\ell}(V\cdot \nu) y_{k,\ell}.\end{equation}

Then, one has to diagonalise the second-order shape derivative  of $\mathcal L_{\Lambda_\rho}$ and prove that there exists a sequence of coefficients $\{\omega_{k,\ell,\rho}\}_{k\in \N\,, 0\leq \ell\leq d_k}$ such that for every $V\cdot \nu$ expanding as \eqref{eq:fin}, the second order derivative of the shape Lagrangian in direction $V$ reads 
$$
\mathcal L_{\Lambda_\rho}''=\sum_{k=1}^\infty\sum_{\ell=1}^{d_k}\left(\alpha_{k,\ell}(V\cdot\nu)\right)^2 \omega_{k,\ell}.
$$
We believe this diagonalization can be proved using separation of variables and the orthogonality properties of the family $\{y_{k,\ell}\}_{k\in \N^*,\ell=1,\dots,d_k}$.

Using the separation of variables, each coefficient $\omega_{k,\ell}$ can be written in terms of derivatives of a family of solutions of one dimensional differential equations. The main difference with the proof of Theorem~\ref{Th:ShapeStability} comes from the fact that the main part of the ODE is not $-\frac1r\frac{d}{dr}(r(1+\alpha m_0^*)\frac{d}{dr})$ anymore, but $-\frac1{r^2}\frac{d}{dr}(r^2(1+\alpha m_0^*)\frac{d}{dr})$. The important fact is that maximum principle arguments may still be used to analyze the diagonalized expression of $\mathcal L_{\Lambda_\rho}''$ and to obtain a uniform bound from below for the sequence $\{\alpha_{k,\ell}\}_{k\in \N^*\,, \ell=1,\dots,d_k}$.

\appendix
\section{Proof of Lemma~\ref{diff:valPvecP}}\label{Ap:Differentiability}

We prove hereafter that the mapping $m\mapsto(u_{\alpha,m}, \lambda_\alpha(m))$ is twice differentiable (and even $\mathscr C^\infty$) in the $L^2$ sense, the proof of the differentiability in the weak $W^{1,2}(\O)$ sense being similar. 
Let $m^*\in \mathcal M_{m_0,\kappa}(\O)$, $\sigma_\alpha:=1+\alpha m^*$,  and $(u_0,\lambda_0)$ be the eigenpair associated with $m^*$. Let $ h\in \mathcal{T}_{m^*}$ (see Def.~\ref{def:tgtcone}). 
Let $m^*_h:=m^*+h$ and $\sigma_{m^*+h}:=1+\alpha(m^*+h).$ Let $(u_{h},\lambda_h)$ be the eigenpair associated with $m^*_h$. 
Let us introduce the mapping $G$ defined by
$$
G:\left\{\begin{array}{ll}
\mathcal{T}_{m^*}\times W^{1,2}_0(\O)\times \R\to W^{-1,2}(\O)\times \R,&
\\(h, v, \lambda)\mapsto \left(-\n \cdot (\sigma_{m^*+h}\n v))-\lambda v-m^*_hv, \int_\O v^2-1\right).&\end{array}
\right.$$
From the definition of the eigenvalue, one has $G(0,u_{0},\lambda_{0})=0$.
Moreover, $G$ is $\mathscr C^\infty$ in $\mathcal{T}_{m^*}\cap B\times W^{1,2}_0(\O)\times \R$, where $B$ is an open ball centered at $ 0$. The differential of $G$ at $(0,u_0,\lambda_0)$ reads
$$
D_{v,\lambda}G(0,u_0,\lambda_0)[w,\mu]=\left(-\n \cdot (\sigma_\alpha \n w)-\mu u_0-\lambda_0 w-m^*w, \int_\O 2u_0w\right).
$$
Let us show that this differential is invertible. We will show that, if $(z,k)\in W^{-1,2}(\O)\times \R$, then there exists a unique pair  $(w,\mu)$ such that $D_{v,\lambda}G(0,u_0,\lambda_0)[w,\mu]=(z,k)$.
According to the Fredholm alternative, one has necessarily $\mu=-\langle z,u_0\rangle_{L^2(\O)}$ and for this choice of $\mu$, there  exists a solution $w_1$ to the equation 
$$
-\n \cdot (\sigma_\alpha \n w)-\mu u_0-\lambda_0 w-m^*w=z\quad \text{in }\O .
$$ 
Moreover, since $\lambda_0$ is simple, any other solution is of the form $w=w_1+tu_0$ with $t\in \R$. From the equation $2\int_\O u_0w=k$, we get $t=k/2-\int_\O w_1u_0$. Hence, the pair $(w,\mu)$ is uniquely determined. According to the implicit function theorem, the mapping $h\mapsto (u_h,\lambda_h)$ is $\mathscr C^\infty$  in a neighbourhood of $\vec 0$.

{\small 
\bibliographystyle{abbrv}
\nocite{*}
\bibliography{BiblioPrinc}

\begin{thebibliography}{10}

\bibitem{Allaire}
G.~Allaire.
\newblock {\em Shape Optimization by the Homogenization Method}.
\newblock Springer New York, 2002.

\bibitem{AlvinoTrombettiLions}
A.~Alvino, P.-L. Lions, and G.~Trombetti.
\newblock Comparison results for elliptic and parabolic equations via
  symmetrization: a new approach.
\newblock {\em Differential Integral Equations}, 4(1):25--50, 1991.

\bibitem{AlvinoTrombetti}
A.~Alvino and G.~Trombetti.
\newblock A lower bound for the first eigenvalue of an elliptic operator.
\newblock {\em Journal of Mathematical Analysis and Applications}, 94(2):328 --
  337, 1983.

\bibitem{Belgacem}
F.~Belgacem.
\newblock {\em Elliptic Boundary Value Problems with Indefinite Weights,
  Variational Formulations of the Principal Eigenvalue, and Applications}.
\newblock Chapman \& Hall/CRC Research Notes in Mathematics Series. Taylor \&
  Francis, 1997.

\bibitem{BelgacemCosner}
F.~Belgacem and C.~Cosner.
\newblock The effect of dispersal along environmental gradients on the dynamics
  of populations in heterogeneous environment.
\newblock {\em Canadian Applied Mathematics Quarterly}, 3:379--397, 01 1995.

\bibitem{BHR}
H.~Berestycki, F.~Hamel, and L.~Roques.
\newblock Analysis of the periodically fragmented environment model : I --
  species persistence.
\newblock {\em Journal of Mathematical Biology}, 51(1):75--113, 2005.

\bibitem{BLR}
H.~Berestycki and T.~Lachand-Robert.
\newblock Some properties of monotone rearrangement with applications to
  elliptic equations in cylinders.
\newblock {\em Math. Nachr.}, 266:3--19, 2004.

\bibitem{Bernhard1995}
P.~Bernhard and A.~Rapaport.
\newblock On a theorem of danskin with an application to a theorem of von
  neumann-sion.
\newblock {\em Nonlinear Analysis: Theory, Methods {\&} Applications},
  24(8):1163--1181, apr 1995.

\bibitem{BrandoliniChiacchioHenrotTrombetti}
B.~Brandolini, F.~Chiacchio, A.~Henrot, and C.~Trombetti.
\newblock Existence of minimizers for eigenvalues of the dirichlet-laplacian
  with a drift.
\newblock {\em Journal of Differential Equations}, 259(2):708 -- 727, 2015.

\bibitem{MR3035462}
A.~Bressan, G.~M. Coclite, and W.~Shen.
\newblock A multidimensional optimal-harvesting problem with measure-valued
  solutions.
\newblock {\em SIAM J. Control Optim.}, 51(2):1186--1202, 2013.

\bibitem{MR2476432}
A.~Bressan and W.~Shen.
\newblock Measure-valued solutions for a differential game related to fish
  harvesting.
\newblock {\em SIAM J. Control Optim.}, 47(6):3118--3137, 2008.

\bibitem{CantrellCosner1}
R.~S. Cantrell and C.~Cosner.
\newblock Diffusive logistic equations with indefinite weights: Population
  models in disrupted environments {II}.
\newblock {\em {SIAM} Journal on Mathematical Analysis}, 22(4):1043--1064, jul
  1991.

\bibitem{CantrellCosner1991}
R.~S. Cantrell and C.~Cosner.
\newblock Diffusive logistic equations with indefinite weights: Population
  models in disrupted environments {II}.
\newblock {\em {SIAM} Journal on Mathematical Analysis}, 22(4):1043--1064, jul
  1991.

\bibitem{CasadoDiaz1}
J.~Casado-D{\'\i}az.
\newblock Smoothness properties for the optimal mixture of two isotropic
  materials: The compliance and eigenvalue problems.
\newblock {\em SIAM J. Control and Optimization}, 53:2319--2349, 2015.

\bibitem{CasadoDiaz2}
J.~Casado-D{\'\i}az.
\newblock Some smoothness results for the optimal design of a two-composite
  material which minimizes the energy.
\newblock {\em Calculus of Variations and Partial Differential Equations},
  53(3):649--673, 2015.

\bibitem{CasadoDiaz3}
J.~Casado-D{\'\i}az.
\newblock A characterization result for the existence of a two-phase material
  minimizing the first eigenvalue.
\newblock {\em Annales de l'Institut Henri Poincare (C) Non Linear Analysis},
  34, 10 2016.

\bibitem{CaubetDeheuvelsPrivat}
F.~Caubet, T.~Deheuvels, and Y.~Privat.
\newblock {Optimal location of resources for biased movement of species: the 1D
  case}.
\newblock {\em {SIAM Journal on Applied Mathematics}}, 77(6):1876--1903, 2017.

\bibitem{CianchiEspositoFusco}
A.~Cianchi, L.~Esposito, N.~Fusco, and C.~Trombetti.
\newblock A quantitative polya?szego principle.
\newblock {\em J. Reine Angew. Math}.

\bibitem{MR3628307}
G.~M. Coclite and M.~Garavello.
\newblock A time-dependent optimal harvesting problem with measure-valued
  solutions.
\newblock {\em SIAM J. Control Optim.}, 55(2):913--935, 2017.

\bibitem{ConcaLaurainMahadevan}
C.~Conca, A.~Laurain, and R.~Mahadevan.
\newblock Minimization of the ground state for two phase conductors in low
  contrast regime.
\newblock {\em SIAM Journal of Applied Mathematics}, 72:1238--1259, 2012.

\bibitem{ConcaMahadevanSanz}
C.~Conca, R.~Mahadevan, and L.~Sanz.
\newblock An extremal eigenvalue problem for a two-phase conductor in a ball.
\newblock {\em Applied Mathematics and Optimization}, 60(2):173--184, Oct 2009.

\bibitem{Cosner2003}
C.~Cosner and Y.~Lou.
\newblock Does movement toward better environments always benefit a population?
\newblock {\em Journal of Mathematical Analysis and Applications},
  277(2):489--503, jan 2003.

\bibitem{CoxLipton}
S.~Cox and R.~Lipton.
\newblock Extremal eigenvalue problems for two-phase conductors.
\newblock {\em Archive for Rational Mechanics and Analysis}, 136:101--117, 12
  1996.

\bibitem{DambrineKateb}
M.~Dambrine and D.~Kateb.
\newblock On the shape sensitivity of the first dirichlet eigenvalue for
  two-phase problems.
\newblock {\em Applied Mathematics {\&} Optimization}, 63(1):45--74, jul 2010.

\bibitem{DambrineLamboley}
M.~Dambrine and J.~Lamboley.
\newblock Stability in shape optimization with second variation.
\newblock {\em Journal of Differential Equations}, Apr. 2019.

\bibitem{Ericksen}
J.~L. Ericksen, D.~Kinderlehrer, R.~Kohn, and J.-L. Lions, editors.
\newblock {\em Homogenization and Effective Moduli of Materials and Media}.
\newblock Springer New York, 1986.

\bibitem{FeroneVolpicelli}
A.~Ferone and R.~Volpicelli.
\newblock Minimal rearrangements of sobolev functions: a new proof.
\newblock {\em Annales de l'Institut Henri Poincare (C) Non Linear Analysis},
  20(2):333--339, mar 2003.

\bibitem{Fisher}
R.~A. Fisher.
\newblock The wave of advances of advantageous genes.
\newblock {\em Annals of Eugenics}, 7(4):355--369, 1937.

\bibitem{GilbargTrudinger}
D.~Gilbarg and N.~S. Trudinger.
\newblock {\em Elliptic Partial Differential Equations of Second Order}.
\newblock Springer Berlin Heidelberg, 1983.

\bibitem{Hamel2011}
F.~Hamel, N.~Nadirashvili, and E.~Russ.
\newblock Rearrangement inequalities and applications to isoperimetric problems
  for eigenvalues.
\newblock {\em Annals of Mathematics}, 174(2):647--755, sep 2011.

\bibitem{Hardy}
G.~H. Hardy.
\newblock Note on a theorem of {H}ilbert.
\newblock {\em Mathematische Zeitschrift}, 6(3):314--317, Sep 1920.

\bibitem{Henrot2006}
A.~Henrot.
\newblock {\em Extremum Problems for Eigenvalues of Elliptic Operators}.
\newblock Birkh\"{a}user Basel, 2006.

\bibitem{henrot2017shape}
A.~Henrot.
\newblock {\em Shape Optimization and Spectral Theory}.
\newblock De Gruyter Open, 2017.

\bibitem{HenrotPierre}
A.~Henrot and M.~Pierre.
\newblock {\em Shape variation and optimization}, volume~28 of {\em EMS Tracts
  in Mathematics}.
\newblock European Mathematical Society (EMS), Z\"{u}rich, 2018.
\newblock A geometrical analysis, English version of the French publication [
  MR2512810] with additions and updates.

\bibitem{HessKato}
P.~Hess and T.~Kato.
\newblock On some linear and nonlinear eigenvalue problems with an indefinite
  weight function.
\newblock {\em Communications in Partial Differential Equations},
  5(10):999--1030, 1980.

\bibitem{KaoLouYanagida}
C.-Y. Kao, Y.~Lou, and E.~Yanagida.
\newblock Principal eigenvalue for an elliptic problem with indefinite weight
  on cylindrical domains.
\newblock {\em Mathematical biosciences and engineering : MBE}, 5 2:315--35,
  2008.

\bibitem{Kawohl}
B.~Kawohl.
\newblock {\em Rearrangements and Convexity of Level Sets in {PDE}}.
\newblock Springer Berlin Heidelberg, 1985.

\bibitem{KPP}
A.~Kolmogoroff, I.~Petrovsky, and N.~Piscounoff.
\newblock \'etudes de l'\'equation avec croissance de la quantit\'e de
  mati\`ere et son application \`a un probl\`eme biologique.
\newblock {\em Moscow University Bulletin Of Mathematics}, 1:1--25, 01 1937.

\bibitem{LamboleyLaurainNadinPrivat}
J.~Lamboley, A.~Laurain, G.~Nadin, and Y.~Privat.
\newblock {Properties of optimizers of the principal eigenvalue with indefinite
  weight and Robin conditions}.
\newblock {\em {Calculus of Variations and Partial Differential Equations}},
  55(6), Dec. 2016.

\bibitem{Laurain}
A.~Laurain.
\newblock Global minimizer of the ground state for two phase conductors in low
  contrast regime.
\newblock {\em ESAIM: Control, Optimisation and Calculus of Variations},
  20(2):362--388, 2014.

\bibitem{Lou2008}
Y.~Lou.
\newblock {\em Some Challenging Mathematical Problems in Evolution of Dispersal
  and Population Dynamics}, pages 171--205.
\newblock Springer Berlin Heidelberg, Berlin, Heidelberg, 2008.

\bibitem{Lou2006}
Y.~Lou and E.~Yanagida.
\newblock Minimization of the principal eigenvalue for an elliptic boundary
  value problem with indefinite weight, and applications to population
  dynamics.
\newblock {\em Japan Journal of Industrial and Applied Mathematics}, 23(3):275,
  Oct 2006.

\bibitem{LouYanagida}
Y.~Lou and E.~Yanagida.
\newblock Minimization of the principal eigenvalue for an elliptic boundary
  value problem with indefinite weight, and applications to population
  dynamics.
\newblock {\em Japan J. Indust. Appl. Math.}, 23(3):275--292, 10 2006.

\bibitem{MazariQuantitative}
I.~Mazari.
\newblock Quantitative inequality for the eigenvalue of a {S}chr\"{o}dinger
  operator in the ball.
\newblock {\em Journal of Differential Equations}, 269(11):10181--10238, Nov.
  2020.

\bibitem{Mazari2020}
I.~Mazari, G.~Nadin, and Y.~Privat.
\newblock Optimal location of resources maximizing the total population size in
  logistic models.
\newblock {\em Journal de Math{\'{e}}matiques Pures et Appliqu{\'{e}}es},
  134:1--35, Feb. 2020.

\bibitem{Mazari2021}
I.~Mazari, G.~Nadin, and Y.~Privat.
\newblock Optimisation of the total population size for logistic diffusive
  equations: bang-bang property and fragmentation rate.
\newblock {\em Preprint HAL}, 2021.

\bibitem{MNPChapter}
I.~Mazari, G.~Nadin, and Y.~Privat.
\newblock {\em Some challenging optimisation problems for logistic diffusive
  equations and numerical issues (chapter)}.
\newblock To appear in Handbook of Numerical Analysis, Volume 23, 2022.

\bibitem{Meyers}
N.~G. Meyers.
\newblock An $l^p$-estimate for the gradient of solutions of second order
  elliptic divergence equations.
\newblock {\em Annali della Scuola Normale Superiore di Pisa - Classe di
  Scienze}, Ser. 3, 17(3):189--206, 1963.

\bibitem{MignotMuratPuel}
F.~Mignot, J.~Puel, and F.~Murat.
\newblock Variation d'un point de retournement par rapport au domaine.
\newblock {\em Communications in Partial Differential Equations},
  4(11):1263--1297, 1979.

\bibitem{Murat}
F.~Murat.
\newblock Contre-exemples pour divers probl{\`e}mes o{\`u} le contr{\^o}le
  intervient dans les coefficients.
\newblock {\em Annali di Matematica Pura ed Applicata}, 112(1):49--68, Dec
  1977.

\bibitem{MuratTartar}
F.~Murat and L.~Tartar.
\newblock {\em Calculus of Variations and Homogenization}, pages 139--173.
\newblock Birkh{\"a}user Boston, Boston, MA, 1997.

\bibitem{Murray}
J.~D. Murray.
\newblock {\em Mathematical Biology}.
\newblock Springer Berlin Heidelberg, 1993.

\bibitem{Nadin2010}
G.~Nadin.
\newblock The effect of the schwarz rearrangement on the periodic principal
  eigenvalue of a nonsymmetric operator.
\newblock {\em {SIAM} Journal on Mathematical Analysis}, 41(6):2388--2406, jan
  2010.

\bibitem{Oleinik}
O.~Oleinik, A.~Shamaev, and G.~Yosifian.
\newblock {\em Mathematical Problems in Elasticity and Homogenization, Volume
  26 (Studies in Mathematics and its Applications)}.
\newblock North Holland, 1992.

\bibitem{AloisKufner}
B.~Opic and A.~Kufner.
\newblock {\em Hardy-type Inequalities}.
\newblock Pitman research notes in mathematics series. Longman Scientific \&
  Technical, 1990.

\bibitem{Serrin1971}
J.~Serrin.
\newblock A symmetry problem in potential theory.
\newblock {\em Archive for Rational Mechanics and Analysis}, 43(4):304--318,
  Jan 1971.

\bibitem{Skellam}
J.~G. Skellam.
\newblock Random dispersal in theoretical populations.
\newblock {\em Biometrika}, 38(1-2):196--218, 06 1951.

\end{thebibliography}
\addcontentsline{toc}{part}{Bibliography}
}
\end{document}